\documentclass[12pt, reqno, a4paper,oneside]{amsart}

\rmfamily{\fontsize{12pt}{\baselineskip}\selectfont}

\usepackage{graphicx}
\usepackage{amsfonts, amsmath, amssymb, amsbsy, amsthm}
\usepackage{bm}
\usepackage{color}
\usepackage{xcolor}
\usepackage{hyperref}
\usepackage{mathrsfs}
\usepackage{longtable}
\usepackage[top=2.5cm,bottom=2.0cm,left=2.0cm,right=2.0cm]{geometry}
\usepackage{epstopdf}
\usepackage{stmaryrd}
\usepackage[normalem]{ulem}
\usepackage{cancel}
\usepackage{scalerel}
\usepackage{enumitem}
\usepackage{algorithm}
\usepackage{subfig}
\usepackage{algorithm, algorithmicx}
\usepackage[noend]{algpseudocode}
\usepackage{lineno}
\usepackage{enumitem}
\usepackage{diagbox}
\usepackage{multirow}
\renewcommand{\algorithmiccomment}[1]{\bgroup\hfill//~#1\egroup}

\usepackage{environments}
\numberwithin{theorem}{section}
\numberwithin{equation}{section}



\newcommand\tbbint{{-\mkern -16mu\int}}

\newcommand\dbbint{{-\mkern -19mu\int}}

\newcommand\bbint{
{\mathchoice{\dbbint}{\tbbint}{\tbbint}{\tbbint}}
}


\renewcommand\arraystretch{1.5}

\def\per{{\rm per}}
\def\mA{{\sf A}}
\def\T{\mathcal{T}}
\def\D{\mathcal{D}}
\def\Z{\mathbb{Z}}

\def\assERL{\textnormal{\bf (RL)}}

\def\asS{\textnormal{\bf (S)}}

\def\Vhom{V^{\rm h}}

\def\E{\mathcal{E}}

\def\p0{\pmb{0}}

\def\Us{\mathscr{U}}

\def\UsH{{\mathscr{U}}^{1,2}}

\def\efit{\varepsilon^{\rm E}}
\def\ffit{\varepsilon^{\rm F}}
\def\fcfit{\varepsilon^{\rm FC}_{\rm hom}}

\def\L{\Lambda}
\def\R{\mathbb{R}}
\def\Rl{\mathcal{R}}

\def\Adm{{\rm Adm}}
\def\<{\langle}
\renewcommand{\>}{\rangle}

\def\Lhom{\L^{\rm h}}

\def\F{\mathcal{F}}
\def\N{\mathbb{N}}

\def\Adm{{\rm Adm}}
\def\Admu{\mathscr{A}}

\def\fc{f_{\rm c}}
\def\Rc{R_{\rm c}}
\def\dx{\,{\rm d}x}

\def\ds{\,{\rm d}s}

\def\n{\mathfrak{n}}

\def\Lhom{\Lambda^{\rm hom}}

\definecolor{yscol}{HTML}{6622AA}


\begin{document}
\title[Generalisation analysis of MLIPs]{A framework for a generalisation analysis of machine-learned interatomic potentials}

\author{Christoph Ortner}
\address{Christoph Ortner\\
Department of Mathematics\\
University of British Columbia\\
1984 Mathematics Road\\
Vancouver, British Columbia\\
Canada
}
\email{ortner@math.ubc.ca}

\author{Yangshuai Wang}
\address{Yangshuai Wang\\
Department of Mathematics\\
University of British Columbia\\
1984 Mathematics Road\\
Vancouver, British Columbia\\
Canada
}
\email{yswang2021@math.ubc.ca}

\date{\today}




\begin{abstract}
	Machine-learned interatomic potentials (MLIPs) and force fields (i.e. interaction laws for atoms and molecules) are typically trained on limited data-sets that cover only a very small section of the full space of possible input structures. MLIPs are nevertheless capable of making accurate predictions of forces and energies in simulations involving (seemingly) much more complex structures. In this article we propose a framework within which this kind of generalisation can be rigorously understood. As a prototypical example, we apply the framework to the case of simulating point defects in a crystalline solid. Here, we demonstrate how the accuracy of the simulation depends explicitly on the size of the training structures, on the kind of observations (e.g., energies, forces, force constants, virials) to which the model has been fitted, and on the fit accuracy. The new theoretical insights we gain partially justify current best practices in the MLIP literature and in addition suggest a new approach to the collection of training data and the design of loss functions.
\end{abstract}

\maketitle


\section{Introduction}
\label{sec:intro}
A key question in machine learning tasks is to understand how well a trained model generalizes to inputs outside of the training data. This is particularly challenging in scientific machine learning where one oftentimes requires generalisation to inputs {\em very far from} training data. The present work is concerned with sketching out a multiscale numerical analysis framework suitable to study this scenario.

We will focus in particular on atomistic mechanics simulations using machine learned interatomic potentials (MLIPs)~\cite{2019-ship1, Bart10, behler07, Braams09, Drautz19, Shapeev16}. The success of molecular simulation relies on the accuracy and efficiency of the interatomic force models. The two main approaches to computing interatomic forces are ab initio electronic structure models \cite{galli1992large, kohanoff2006electronic, kotliar2006electronic, saad2010numerical} and purely mechanistic models  \cite{Daw1984a, finnis03, LJ, Stillinger1985Computer}. The former are computationally prohibitive while the latter often provide insufficient accuracy. The achievement of MLIPs is to provide a classes of models with tunable accuracy/efficiency ratio which promises to bridge the significant gap in accuracy and capability between ab initio electronic structure models and classical mechanistic models (empirical potentials).

In this work we will study by analytical (as opposed to statistical) methods how the choice of training data and the accuracy of the fit to that training data affect the accuracy of predictions. The key challenge we hope to better understand is the following: training data for MLIPs is obtained from ab initio electronic structure simulations. Due to the high computational cost of these models, only small computational domains (structures) containing at most hundreds of atoms are used. However, predictions during simulations are performed on much larger domains often containing hundreds of thousand or even millions of atoms. 

As a prototype application we consider classes of structures containing crystalline defects, and to keep the notational and technical burden to a minimum we further restrict the present work to point defects only. In this setting, training domains would typically be small cells containing a single defect, while simulations would be performed on much larger domains containing potentially many copies of the defects trained on. In this situation, the basic intution is clear: the trained MLIP has already seen local snapshots of the structure on which it is predicting energies or interatomic forces, and due to its functional form that is indeed restricted to only local interactions it is therefore able to to make accurate predictions. 

Yet, the details are subtle and warrant a deeper look: How does the prediction error depend on the size of the training domain? How should the various observations we make (energies, forces, virials) be weighted? Which of these observations provide the dominant contribution to the prediction error? Thus, we see that even this highly simplified setting leads to interesting questions that can significantly inform the design of parameter estimation schemes.

\subsection{Outline}
We focus on multiple point defects embedded in a periodic homogeneous host crystal, where a rigorous numerical analysis approach is feasible. The atomistic equilibration problem for a single crystalline defect in this context is a well-defined variational problem~\cite{chen19, Ehrlacher16}. We review the framework and adapt it to the case of multiple point defects considered in this work along the line of~\cite{2014-dislift} in Section~\ref{sec:equilibration}. This requires in particular a new existence and stability result (Theorem~\ref{them:existence}) for general configurations of multiple point defects. Our generalisation analysis heavily relies on this result since it characterises the structure of the equilibrium structures.

To propose a framework within which the generalisation can be rigorously understood, we investigate the error propagation from fitting MLIPs on a small training domain to predicting the material properties (e.g., defect geometry and formation energy) on a large simulation domain in Section \ref{sec:sub:main}. We demonstrate how the accuracy of the material properties in the simulation depends explicitly on the size of the training structures, on the kind of observations (e.g., energies, forces, force constants) to which the model has been fitted, and on the fit accuracy. 
Explicit theoretical convergence rates are summarized in Theorem~\ref{them:geometry} and Table~\ref{table-e-mix}.  

We then propose a concrete implementation of MLIPs inspired by our generalisation analysis to confirm the analytical error estimates on several model problems in Section~\ref{sec:numer}. 

Finally, we will discuss further consequences and limitations our work  in Section \ref{sec:conclu}. For example, a generalisation to other ``simple defects'' such as straight dislocation lines appears straightforward. In more complex scenarios, such as curved dislocations or extrapolating on grain boundary structures with distinct coordination environment there are additional challenges that our analysis does not cover even heuristically and requires significant additional ideas.



\subsection{Notation}
We use the symbol $\langle\cdot,\cdot\rangle$ to denote an abstract duality
pairing between a Banach space and its dual space. The symbol $|\cdot|$ normally
denotes the Euclidean or Frobenius norm, while $\|\cdot\|$ denotes an operator
norm.
For a finite set $A$, we will use $\#A$ to denote the cardinality of $A$.
For the sake of brevity of notation, we will denote $A\backslash\{a\}$ by
$A\backslash a$, and $\{b-a~\vert ~b\in A\}$ by $A-a$.
For $E \in C^2(X)$, the first and second variations are denoted by
$\<\delta E(u), v\>$ and $\<\delta^2 E(u) v, w\>$ for $u,v,w\in X$.
For $j\in\N$, ${\bm{g}}\in (\R^d)^A$, and $V \in C^j\big((\R^d)^A\big)$, we define the notation
\begin{eqnarray*}
	V_{,{\bm \rho}}\big({\bm g}\big) :=
	\frac{\partial^j V\big({\bm g}\big)}
	{\partial {\bm g}_{\rho_1}\cdots\partial{\bm g}_{\rho_j}}
	\qquad{\rm for}\quad{\bm \rho}=(\rho_1, \ldots, \rho_j)\in A^{j}.
\end{eqnarray*}
The symbol $C$ denotes generic positive constant that may change from one line
of an estimate to the next. When estimating rates of decay or convergence, $C$
will always remain independent of the system size, the configuration of the lattice and the the test functions. The dependence of $C$ will be normally clear from the context or stated explicitly.
The closed ball with radius $r>0$ and center $x$ is denoted by $B_r(x)$, or $B_r$ if the center is the origin.

\section{Background: Equilibration of crystalline defects}
\label{sec:equilibration}
\setcounter{equation}{0}

A rigorous framework for modelling the geometric equilibrium of crystalline defects has been developed in~\cite{chen18, chen19, Ehrlacher16, co2020}.
These works formulate the equilibration of a single crystalline defect as a variational problem in a discrete energy space and establish qualitatively sharp far-field decay estimates for the corresponding equilibrium configuration. 
We will review the framework and adapt it to the case of multiple point defects considered in this work along the lines of~\cite{2014-dislift}. This will provide the analytic foundation of our generalisation analysis.
For the sake of simplicity of presentation, we will skip over some technical details but fill these gaps in Section~\ref{sec:sub:pre}.

Let $d\in\{2,3\}$ be the (effective) dimension of the system. A homogeneous crystal reference configuration is given by the Bravais lattice
$\Lhom=\mathsf{A}\Z^d$, for some non-singular matrix $\mathsf{A} \in \mathbb{R}^{d \times d}$.
We admit only single-species Bravais lattices. There are no conceptual obstacles to generalising our work to multi-lattices, however, the technical details become more involved.
The reference configuration with defects is a set $\L \subset \R^d$. The mismatch between $\L$ and $\Lhom$ represents possible defected configurations. 
In this paper, we consider multiple point defects in a finite domain with periodic boundary conditions.
To that end, let $\mathsf{B}=(b_1, \ldots, b_d) \in \R^{d \times d}$ invertible such that $b_i \in \mathsf{A}\Z^d$. We denote the continuous cell by $\Omega_N:=\mathsf{B}(-N/2, N/2]^d$. For a sufficiently large $N \in \N$, let 
\[
\L_N:=\L\cap\Omega_N \quad \textrm{and} \quad \L_N^{\per}:= \bigcup_{\alpha \in N\Z^d}(\mathsf{B}\alpha + \L_N),
\]
where $\L_N$ is the periodic computational domain and $\L_N^{\per}$ is the periodically repeated domain. 

We consider $n_{\D}$ point defects in $\L_N$, e.g. vacancies or interstitials, located at $\ell_i\in\Omega_N$ for $i=1,\ldots,n_{\D}$. Let $\D:=\{\ell_i\}_{i=1}^{n_{\D}}$ be a set of the positions of these defect cores in $\L_N$. We assume that the defect cores are localized, that is, there exists $R_{\rm def}>0$ such that $\L_N \setminus \cup^{n_{\D}}_{i=1} B_{R_{\rm def}}(\ell_i) = (\Lhom \cap \Omega_N) \setminus \cup^{n_{\D}}_{i=1} B_{R_{\rm def}}(\ell_i)$.
We define the minimum separation distance of $\D$ by 
$$
L_{\D} := \inf \Big\{ |\ell'_i - \ell'_j|~\big|~\ell'_i, \ell'_j \in \bigcup_{k=1}^{n_{\D}}\bigcup_{\alpha\in N\Z^d}(\mathsf{B}\alpha+\ell_k), i\neq j \Big\}.
$$ 
We assume $L_{\D} \ll N$ throughout this paper and we refer to Figure \ref{fig:illustration} for a two dimensional example with $\mA$ defined by \eqref{eq:mA} specifying a triangular lattice.
\begin{figure}[!htb]
    \centering
    \includegraphics[height=5cm]{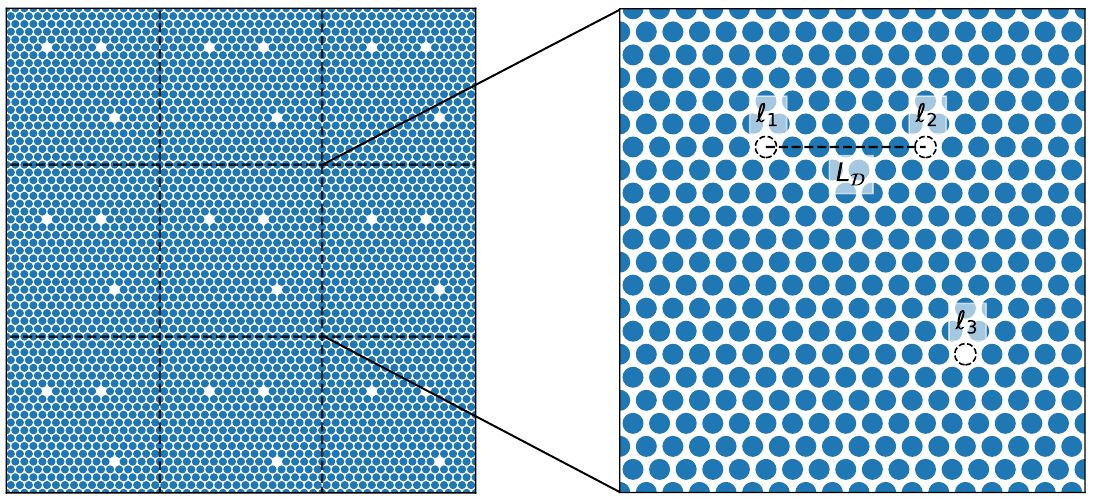}
    \caption{Illustration of $\L_N^{\rm per}$ (left), $\L_N$ and $\mathcal{D}$ (right) for three vacancies in a 2D triangular lattice.}
    \label{fig:illustration}
\end{figure}

A {\it displacement} field of the periodically repeated lattice $\L_N^{\rm per}$ is a map $u:\L_N^{\rm per}\rightarrow\R^d$.
For $\ell\in\L_N^{\rm per}$ and $\rho\in\L_N^{\rm per}-\ell$, we define the finite difference
$D_\rho u(\ell) := u(\ell+\rho) - u(\ell)$. For a subset $\Rl \subset \L_N^{\rm per}-\ell$, we
define $D_{\Rl} u(\ell) := (D_\rho u(\ell))_{\rho\in\Rl}$, and we consider $Du(\ell) := D_{\L_N^{\rm per}-\ell} u(\ell)$ to be a finite-difference stencil with infinite range.
For a stencil $Du(\ell)$, we define the stencil norms
\begin{align}\label{eq: nn norm}
\big|Du(\ell)\big|_{\mathcal{N}} := \bigg( \sum_{\rho\in \mathcal{N}(\ell) - \ell} \big|D_\rho u(\ell)\big|^2 \bigg)^{1/2}  
\quad{\rm and}\quad
\|Du\|_{\ell^2_{\mathcal{N}}(\L_N)} := \bigg( \sum_{\ell \in \L_N} |Du(\ell)|_{\mathcal{N}}^2 \bigg)^{1/2},
\end{align}
where $\mathcal{N}(\ell)$ defined by \eqref{def1:Nl} is the set containing nearest neighbours of site $\ell$.

The site potential is a collection of mappings $V_{\ell}:(\R^d)^{\L-\ell}\rightarrow\R$, which represent the energy distributed to each atomic site. 
To simplify the notation we assume that $V_{\ell}({\bf 0})=0$ for all $\ell$, which is equivalent to considering a potential energy-difference.
We state the assumptions on the regularity and locality of the site potentials in Section \ref{sec:sub:pre} and refer to~\cite[\S 2.3 and \S 4]{chen19} for a detailed discussion of those assumptions.
%
If $\L = \Lhom$, we denote the site potential by $\Vhom:(\R^d)^{\Lhom \setminus 0}\rightarrow\R$.

%
We define the space of periodic displacements to be
\[
\Us^{\per}_N := \{u:\L_N^{\per} \rightarrow \R^d~|~u(\ell+\mathsf{B}\alpha) = u(\ell) ~\textrm{for}~\alpha \in N\Z^d\}.
\]
%
For $u \in \Us^{\per}_{N}$ and $N$ sufficiently large, the periodic energy functional reads 
\begin{eqnarray}\label{energy-difference-per}
\E(u) := \sum_{\ell\in\Lambda_N} V_{\ell}\big(Du(\ell)\big).
\end{eqnarray}
An equilibrium defect geometry is obtained by solving
\begin{align}\label{eq:variational-problem-per}
&\bar{u} \in \arg\min \big\{ \E(u), u \in \Admu \big\}, \\
\textrm{where} \quad &\Admu := \{ u \in \Us_N^{\per}: x + u \in \Adm_0(\L^{\rm per}_N)  \} \nonumber 
\end{align}
is the admissible set with $x:\L_N^{\rm per}\rightarrow\R, x(\ell):=\ell$ and $\Adm_0(\L^{\rm per}_N)$ defined in \eqref{eq:admiss} is represents a constraint preventing the collision of atoms. 

For the purpose of error analysis we will need a strong stability condition~\cite{chen19, Ehrlacher16} as well as qualitative information about the equilibrium. To that end, we assume the following:
\begin{flushleft}\label{as:LSs}
	\asS \quad There exists a single strongly stable core $\bar{u}^{\rm core}$ defined by \eqref{eq:variational-problem} in the infinite lattice $\L$.
\end{flushleft}
For the sake of simplicity of presentation, we postpone the rigorous formulation of \asS to Section~\ref{sec:sub:pre}, Equation~\eqref{eq:def_S}.

The following result shows that, given $n_{\D}$ point defects in $\L_N$, there exists a strongly stable equilibrium of \eqref{eq:variational-problem-per} as long as \asS~holds and the core positions satisfy a minimum separation criterion from each other. The detailed proof is given in Section \ref{sec:sub:thm1}.

\begin{theorem}\label{them:existence}
Suppose that \asS~holds. Then, for each $n_{\D} \in \N$, there exists a constant $L_0>0$ such that for any core configuration $\D$ satisfying $L_{\D} \geq L_0$, there exists a strongly stable equilibrium of \eqref{eq:variational-problem-per} and it can be written as
\begin{eqnarray}
\bar{u}(\ell) = \sum_{\ell_i \in \D} \Pi_{R} \bar{u}^{\rm core}(\ell-\ell_i) + \omega(\ell), \qquad \forall \ell \in \L_N,
\end{eqnarray}
where the defect core truncation operator $\Pi_R$ is defined by \eqref{eq:trun_op_def} with radius $R=L_{\D}/3$, and where $\omega \in \Admu$ satisfies
\begin{eqnarray}
\|D\omega\|_{\ell^2_{\mathcal{N}}(\L_N)} \leq C \sqrt{n_{\D}} \cdot L_{\D}^{-d/2},
\end{eqnarray}
with a constant $C$ independent of $N, n_{\D}, L_{\D}$. 
\end{theorem}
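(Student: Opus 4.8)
The plan is to follow the consistency--stability--inverse-function-theorem strategy underlying the single-defect theory of \cite{Ehrlacher16, chen19}, combined with a superposition (``lift'') construction in the spirit of \cite{2014-dislift}. The object $u_0(\ell) := \sum_{\ell_i \in \D} \Pi_R \bar{u}^{\rm core}(\ell - \ell_i)$ with $R = L_{\D}/3$ serves as a \emph{predictor} (approximate equilibrium), and the correction $\omega$ will be produced by the inverse function theorem so that $\bar u = u_0 + \omega$ is an exact, strongly stable equilibrium of \eqref{eq:variational-problem-per}. Because the cores (and all their periodic images) are separated by at least $L_{\D}$ while each truncated profile is supported in a ball of radius $L_{\D}/3$, the bumps $\Pi_R\bar u^{\rm core}(\cdot-\ell_i)$ have pairwise disjoint supports, and moreover their residual regions are separated by a distance $\gg \rcut$; this decoupling is what ultimately produces the clean $\sqrt{n_{\D}}$ scaling.

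\emph{Consistency.} First I would estimate the residual $\delta\E(u_0)$ in the dual norm. Since $\bar u^{\rm core}$ is an equilibrium of the infinite single-defect problem, $\delta\E\big(\Pi_R\bar u^{\rm core}(\cdot-\ell_i)\big)$ differs from zero only through the truncation, and by Lipschitz continuity of $\delta\E$ its dual norm is controlled by $\|D(\Pi_R\bar u^{\rm core}) - D\bar u^{\rm core}\|_{\ell^2}$. The key analytic input is the far-field decay estimate $|D\bar u^{\rm core}(\ell)| \lesssim |\ell|^{-d}$ for the single-defect equilibrium (established in \cite{Ehrlacher16, chen19}), together with the locality of $V_\ell$. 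The dominant contribution is the severed gradient tail, whose $\ell^2$ norm is $\big(\sum_{|\ell|>R}|\ell|^{-2d}\big)^{1/2} \lesssim R^{-d/2} \lesssim L_{\D}^{-d/2}$ per core. Because the per-core residuals are supported in disjoint, well-separated regions, summing their squares over the $n_{\D}$ cores gives $\|\delta\E(u_0)\|_* \lesssim \sqrt{n_{\D}}\, L_{\D}^{-d/2}$, which is precisely the scaling claimed for $\omega$.

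\emph{Stability.} I expect the main obstacle to be proving that $\delta^2\E(u_0)$ is coercive, $\langle\delta^2\E(u_0)v,v\rangle \geq \tfrac{c_0}{2}\|Dv\|_{\ell^2_{\mathcal{N}}(\L_N)}^2$, \emph{uniformly} in $N$, $n_{\D}$ and $L_{\D}$. The natural route is localization via a partition of unity subordinate to the core neighbourhoods $B_{L_{\D}/2}(\ell_i)$ and the complementary homogeneous region. On each core patch the Hessian equals the single-core Hessian up to an $O(L_{\D}^{-d})$ perturbation and is therefore coercive by \asS; on the far region it is a small perturbation of the homogeneous-lattice Hessian, whose coercivity is likewise encoded in \asS. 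The delicate points are to bound the cross-terms generated when $\delta^2\E$ is commuted past the partition of unity and to verify that every perturbation constant is genuinely independent of $n_{\D}$ and $N$; this is exactly where the hypothesis $L_{\D}\geq L_0$ is consumed.

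\emph{Correction and conclusion.} With consistency and uniform stability in hand, and using a uniform local Lipschitz bound on $v\mapsto\delta^2\E(u_0+v)$, the quantitative inverse function theorem furnishes a locally unique $\omega\in\Admu$ with $\|D\omega\|_{\ell^2_{\mathcal{N}}(\L_N)} \lesssim \|\delta\E(u_0)\|_*/c_0 \lesssim \sqrt{n_{\D}}\,L_{\D}^{-d/2}$, provided $L_0$ is chosen large enough that the smallness condition of the inverse function theorem holds. Finally, since $\omega$ is small in the energy norm and $\delta^2\E(u_0)$ is coercive, continuity of $\delta^2\E$ transfers coercivity to $\bar u = u_0+\omega$, so the constructed equilibrium is strongly stable, completing the proof.
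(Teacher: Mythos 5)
Your overall architecture --- predictor $u_0=\sum_i\Pi_R\bar u^{\rm core}(\cdot-\ell_i)$, consistency bound, uniform coercivity of the Hessian at the predictor, then the quantitative inverse function theorem --- is exactly the paper's strategy, and your consistency estimate matches the paper's: the dominant term is the truncation error per core, $\|D\Pi_R\bar u^{\rm core}-D\bar u^{\rm core}\|_{\ell^2}\lesssim R^{-d/2}$ via the decay $|D\bar u^{\rm core}(\ell)|_{\mathcal N}\lesssim(1+|\ell|)^{-d}$, with the $\sqrt{n_{\D}}$ coming from the disjointness of the per-core contributions. (The paper additionally isolates an exponentially small far-field term from the locality of $V_\ell$ and an $N^{-d/2}$ term from periodizing the single core; these are subsumed by your leading-order estimate.)

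The one place where your route genuinely diverges, and where there is a gap, is the stability step. You propose a \emph{direct} partition-of-unity localization with a fixed cutting radius $\sim L_{\D}/2$ and acknowledge the cross-terms as ``delicate,'' but the difficulty is sharper than that phrasing suggests: for a fixed cutting annulus, a unit-norm test function $v$ may concentrate an $O(1)$ fraction of $\|Dv\|^2_{\ell^2_{\mathcal N}}$ precisely in that annulus, in which case the cross-terms $\langle\delta^2\E(u_0)v_0,v_i\rangle$ are \emph{not} small, and no choice of $L_0$ fixes this. The standard repairs are either (i) an adaptive choice of cutting radius depending on $v$ (e.g.\ by pigeonhole over $O(L_{\D})$ disjoint annuli, one of which carries at most $O(L_{\D}^{-1})$ of the energy), or (ii) the route the paper takes: argue by contradiction along a sequence $L_k\to\infty$, invoking \cite[Lemma 7.9]{Ehrlacher16} to extract radii $r_k\to\infty$ adapted to the extremizing sequence $v_k$ so that the cross-terms vanish in the limit, with coercivity on the core patches supplied by Lemma~\ref{le:pi:stab} and on the far region by Proposition~\ref{pro:phonon}. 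Without one of these devices your stability argument does not close, and since the $L_{\D}$-uniform coercivity is precisely what licenses the inverse function theorem, you should supply it explicitly. The final transfer of coercivity from $u_0$ to $\bar u=u_0+\omega$ via Lipschitz continuity of $\delta^2\E$ is fine and is what the paper does through Lemma~\ref{le:ift}.
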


The condition $L_{\D} \geq L_0$ entails that defect cores do not overlap too strongly. Theorem~\ref{them:existence} not only gives the existence of the equilibrium of the multiple point defects in a periodic domain, but also establishes its structure: The equilibrium can be decomposed into two parts, a truncated defect core centered at each point defect and a remainder term. Our generalisation analysis in the next section heavily relies on this result.

\section{Main results}
\label{sec:sub:main}

\subsection{Error estimates}
\label{sec:sub:error}

The computational cost of solving \eqref{eq:variational-problem-per} on a large simulation domain is prohibitive when an electronic structure model is taken into consideration. Surrogate models are therefore introduced, fitted to an electronic structure model, to give reasonable approximations.
If we denote the surrogate site potential by $\widetilde{V}_{\ell}$ (resp. $\widetilde{V}^{\rm h}$), then the corresponding energy-difference functionals are given by
\begin{eqnarray}
\label{energy-difference-approx}
\widetilde{\E}(u) := \sum_{\ell\in\L_N} \widetilde{V}_{\ell}\big(Du(\ell)\big)
\qquad \text{and} \qquad 
\widetilde{\E}^{\rm h}(u) := \sum_{\ell\in\Lhom} \widetilde{V}^{\rm h}\big(Du(\ell)\big).
\end{eqnarray}
The resulting variational problem for the equilibration reads
\begin{equation}\label{eq:variational-problem-approx}
\tilde{u} \in \arg\min \big\{ \widetilde{\E}(u), u \in \Admu \big\}.
\end{equation}

The surrogate model enables large-scale simulation but obtaining ab initio training data on a large simulation domain remains intractable. Instead, one normally fits the parameters in the surrogate model to ab initio simulations on very small training domains containing at most a few hundred atoms. These should include all possible local snapshots one expects to encounter in the simulation; in our case, regions of homogeneous crystal or single defects. In the following we will give a rigorous framework within which this intuition is made precise and all the resulting errors are quantified.

To that end, given $L_0\leq L\leq L_{\D}$, we call $\Omega_L:=\mathsf{B}(-L/2, L/2]^d$ the {\it training domain} while $\Omega_N$ is called the {\it simulation domain}. Let $\L_L:= \L \cap \Omega_L$ contain a single defect core located at the origin. Similarly as in the previous section, let $\Us^{\rm per}_{L}$ and $\Admu_L$ be the corresponding space of periodic displacements and admissible set. We equip $\Us_L^{\rm per}$ with the norm $\|u\|_{\Us_L^{\rm per}} := \|Du\|_{\ell^2_{\mathcal{N}}(\L_L)}$. Let $\E_L(u)$ and $\widetilde{\E}_L(u)$ be the energy functionals defined on $\L_{L}$, and the equilibrium of the corresponding variational problem with $\E_L(u)$ is denoted as $\bar{u}_L$.

Next, we introduce the {\it matching conditions} between the reference and the {\it approximated} models, in terms of the observations (energies, forces and force constants) of the possible configurations near $\bar{u}_L$. To be more precise, let $\delta>0$ such that $B_{\delta}(\bar{u}_L) \subset \Admu_L$ representing atomic displacements near $\bar{u}_L$, then the {\it matching condition} for the energy is defined by
\begin{eqnarray}\label{eq:ED}
\varepsilon^{\rm E} := \max_{u_L \in B_{\delta}(\bar{u}_L)} \big| \E_L(u_{L}) - \widetilde{\E}_L(u_{L})\big|.
\end{eqnarray}
To measure the force error, we introduce
\begin{eqnarray}\label{eq:ffit}
\varepsilon^{\rm F} := \max_{u_L \in B_{\delta}(\bar{u}_L)} \big\|-\nabla \E_L(u_{L})+ \nabla \widetilde{\E}_L(u_{L})\big\|_{(\Us^{\rm per}_{L})^{*}}, 
\end{eqnarray}
where $\|\cdot\|_{(\Us^{\rm per}_{L})^{*}}$ is the dual norm of $\Us^{\rm per}_{L}$.
Furthermore, for the stability analysis, we require the force constant error 
\begin{eqnarray}\label{eq:FCD}
\varepsilon^{\rm FC} := \big\|\nabla^2 \E_L(\bar{u}_{L}) - \nabla^2 \widetilde{\E}_L(\bar{u}_{L})\big\|_{\mathcal{L}(\Us^{\rm per}_{L}, (\Us^{\rm per}_{L})^{*})}. 
\end{eqnarray}
In particular, the force constant error on the homogeneous lattice is given by
\begin{eqnarray}\label{eq:FCDhom}
\varepsilon^{\rm FC}_{\rm hom} := \big\|\nabla ^2\mathcal{E}^{\rm h}({\bm 0}) - \nabla^2 \widetilde{\mathcal{E}}^{\rm h}({\bm 0})\big\|_{\mathcal{L}(\Us^{\rm per}_{L}, (\Us^{\rm per}_{L})^{*})}.
\end{eqnarray}

The following result provides a rigorous {\it a priori} error estimate of the geometry error $\|D\bar{u}-D\tilde{u}\|_{\ell^2_{\mathcal{N}}(\L_N)}$ and of the formation energy error $|\E(\bar{u}) - \widetilde{\E}(\tilde{u})|$ in terms of the size of the training domains and in terms of the fit accuracy of the surrogate model. 
The proof is given in Section \ref{sec:sub:thm2}.

\begin{theorem}\label{them:geometry}
Suppose that \asS~is satisfied and that $\bar{u}$ is a strongly stable equilibrium of \eqref{eq:variational-problem-per}. Then, for $L, L_{\D}$ sufficiently large, satisfying $L\leq L_{\D}$, and $\varepsilon^{\rm FC}$ and $\varepsilon^{\rm FC}_{\rm hom}$ sufficiently small, there exists an equilibrium $\tilde{u}$ of the surrogate model \eqref{eq:variational-problem-approx} such that
\begin{align}\label{eq:geoerr}
\|D\bar{u} - D\tilde{u}\|_{\ell^2_{\mathcal{N}}(\L_N)} &\leq~ C^{\rm G} \sqrt{n_{\D}} \cdot \big( \varepsilon^{\rm F} + L^{-d/2} \varepsilon^{\rm FC}_{\rm hom} + L^{-3d/2}\big), \\
\label{eq:ergyerr}
\big| \E(\bar{u}) - \widetilde{\E}(\tilde{u}) \big| &\leq~ C^{\rm E} \,\, n_{\D} \cdot \Big( \big( \varepsilon^{\rm F} + L^{-d/2} \varepsilon^{\rm FC}_{\rm hom} + L^{-3d/2}\big)^2 + L^{-d} + \varepsilon^{\rm E}  \Big),
\end{align}
where both constants $C^{\rm G}$ and $C^{\rm E}$ are independent of $N, n_{\D}, L$.
\end{theorem}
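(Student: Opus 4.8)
The plan is to cast this as a consistency-and-stability argument (a quantitative inverse function theorem) comparing the reference minimiser $\bar{u}$ and the surrogate minimiser $\tilde{u}$ on the simulation domain. Since $\bar{u}$ is an equilibrium of the reference model, $\nabla\E(\bar{u})=0$, so the surrogate residual at $\bar{u}$ is the pure force mismatch $\nabla\widetilde{\E}(\bar{u})=\nabla\widetilde{\E}(\bar{u})-\nabla\E(\bar{u})$. I would first record uniform stability: by assumption \asS{} and Theorem~\ref{them:existence} the reference Hessian $\nabla^2\E(\bar{u})$ is positive definite with a gap $\gamma_0$ independent of $N,n_{\D},L_{\D}$; writing $\nabla^2\widetilde{\E}(\bar{u})=\nabla^2\E(\bar{u})+\big(\nabla^2\widetilde{\E}-\nabla^2\E\big)(\bar{u})$ and localising the perturbation, its near-core part is controlled by $\varepsilon^{\rm FC}$ and its far-field part by $\varepsilon^{\rm FC}_{\rm hom}$, so that for these sufficiently small the surrogate Hessian retains a gap $\gamma_0/2$.

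The crux is to bound the residual $\|\nabla\widetilde{\E}(\bar{u})\|_{(\Us^{\rm per}_N)^*}$ by the training-domain matching quantities. Here I would use the decomposition $\bar{u}=\sum_{\ell_i\in\D}\Pi_R\bar{u}^{\rm core}(\cdot-\ell_i)+\omega$ from Theorem~\ref{them:existence} to split $\L_N$ into the $n_{\D}$ core regions $B_R(\ell_i)$ and the homogeneous far field, and test against $v$ site-by-site. Near each core the reference stencil $D\bar{u}(\ell)$ lies, up to small truncation, interaction and $\omega$ corrections, in $B_\delta(\bar{u}_L)$, so the force mismatch there is controlled by $\varepsilon^{\rm F}$; since the $n_{\D}$ contributions have essentially disjoint support, a Cauchy--Schwarz over defects yields the $\sqrt{n_{\D}}$ prefactor. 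In the far field I would Taylor-expand the homogeneous force mismatch about ${\bm 0}$: the zeroth order vanishes because the undeformed lattice is a common equilibrium of both models, the first order is $\varepsilon^{\rm FC}_{\rm hom}$ times the displacement gradient---whose far-field $\ell^2_{\mathcal{N}}$-norm decays like $L^{-d/2}$ thanks to the $|\ell|^{-d}$ decay of the point-defect field---giving the $L^{-d/2}\varepsilon^{\rm FC}_{\rm hom}$ contribution, while the remaining $L^{-3d/2}$ term is the fit-independent modelling error from the finite interaction range probed by the training cell together with the core truncation, bounded via the locality of the site potentials. Collecting these gives $\|\nabla\widetilde{\E}(\bar{u})\|_{(\Us^{\rm per}_N)^*}\lesssim\sqrt{n_{\D}}\big(\varepsilon^{\rm F}+L^{-d/2}\varepsilon^{\rm FC}_{\rm hom}+L^{-3d/2}\big)$.

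With stability $\gamma_0/2$ and this residual bound, the quantitative inverse function theorem produces an equilibrium $\tilde{u}$ of \eqref{eq:variational-problem-approx} with $\|D\bar{u}-D\tilde{u}\|_{\ell^2_{\mathcal{N}}(\L_N)}\lesssim\gamma_0^{-1}\|\nabla\widetilde{\E}(\bar{u})\|_{(\Us^{\rm per}_N)^*}$, which is \eqref{eq:geoerr}, provided the residual is small relative to the Lipschitz constant of $\nabla^2\widetilde{\E}$. For the energy estimate \eqref{eq:ergyerr} I would split $\E(\bar{u})-\widetilde{\E}(\tilde{u})=\big[\E(\bar{u})-\widetilde{\E}(\bar{u})\big]+\big[\widetilde{\E}(\bar{u})-\widetilde{\E}(\tilde{u})\big]$. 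The second bracket, by Taylor expansion about the surrogate minimiser (where $\nabla\widetilde{\E}(\tilde{u})=0$) and the upper Hessian bound, is controlled by $\|D\bar{u}-D\tilde{u}\|^2\lesssim n_{\D}(\varepsilon^{\rm F}+L^{-d/2}\varepsilon^{\rm FC}_{\rm hom}+L^{-3d/2})^2$. The first bracket, $\sum_{\ell}(V_\ell-\widetilde{V}_\ell)(D\bar{u}(\ell))$, splits as before: the $n_{\D}$ core regions contribute $n_{\D}\varepsilon^{\rm E}$ by the energy matching, while the far-field expansion (zeroth and first orders vanishing) contributes at second order $\sim n_{\D}L^{-d}$ through $\|D\bar{u}\|^2$ on the tail, producing the $L^{-d}+\varepsilon^{\rm E}$ terms.

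The main obstacle is the step transferring the matching conditions---defined only on the small, single-defect, $L$-periodic training cell---to the multi-defect, $N$-periodic simulation cell. Concretely, one must (i) show that the reference stencils near each core in the simulation genuinely fall within the ball $B_\delta(\bar{u}_L)$ on which $\varepsilon^{\rm F}$ and $\varepsilon^{\rm E}$ are controlled, which hinges on quantitative far-field decay of $\bar{u}^{\rm core}$ and on controlling the overlap of neighbouring defect fields, and (ii) extract the sharp, fit-independent powers $L^{-3d/2}$ and $L^{-d}$ from the interplay between the point-defect decay rate and the locality of the site potentials. Handling the finite- versus infinite-range stencils and the two periodicities $L$ and $N$ simultaneously, while keeping all constants independent of $N,n_{\D},L$, is where the bulk of the technical work lies.
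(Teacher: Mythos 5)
Your proposal follows essentially the same route as the paper: stability plus consistency via the core/far-field localisation of test functions, Taylor expansion of the force mismatch about the homogeneous state to extract the $L^{-d/2}\varepsilon^{\rm FC}_{\rm hom}$ and $L^{-3d/2}$ terms, and a quantitative inverse function theorem, followed by the same term-by-term accounting for the energy error. The only cosmetic difference is that you split the energy error at $\bar{u}$ whereas the paper inserts the truncated predictor $z$ from Theorem~\ref{them:existence} as the intermediate point, which makes the far-field contribution vanish identically rather than requiring a second-order expansion there; both yield the same $n_{\D}(L^{-d}+\varepsilon^{\rm E})$ terms.
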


The error estimates in the foregoing theorem identify how the geometry error and the error in formation energy depend on data-oriented approximation parameters: model accuracy on the {\it training domain} and its size, $L$. If we construct the {\it approximated} site potential $\widetilde{V}_{\ell}$ such that the {\it matching conditions} (from \eqref{eq:ED} to \eqref{eq:FCDhom}) are exactly zero, we obtain rates of convergence in terms of $L$. Conversely, if $L$ is sufficiently large, the errors then depend only on the {\it matching conditions} $\varepsilon^{\rm F}, \varepsilon^{\rm E}$. These limiting cases are summarized in Table~\ref{table-e-mix}. We will see in Section~\ref{sec:sub:numerics} that these rates are indeed sharp. 


\begin{table}
	\begin{center}
	\vskip0.2cm
	\begin{tabular}{|c|c|c|c|}
	\hline
		Errors ($d=2,3$) & {$\varepsilon^{\rm E, F}=0$} & {$\varepsilon^{\rm E, F}=\varepsilon^{\rm FC}_{\rm hom}=0$} & {$L$ sufficiently large} \\[1mm]
		\hline
		Geometry &  $L^{-d/2}$ & $L^{-3d/2}$ & $\varepsilon^{\rm F}$
		 \\[1mm]
		 \hline
		Energy &  $L^{-d}$ & $L^{-d}$ & $ \varepsilon^{\rm E} + \big(\varepsilon^{\rm F}\big)^2$
		\\[1mm]
		\hline 
	\end{tabular}
	\medskip 
	\caption{Limiting cases of error decay with respect to $L$ and the matching conditions $\varepsilon^{\rm E}, \varepsilon^{\rm F}$ and $\varepsilon^{\rm FC}_{\rm hom}$.}
	\label{table-e-mix}
	\end{center}
\end{table}

There are several insights we can gain from our results. First, they clearly tease out an issue that is --- to the best of our knowledge --- never discussed in the MLIPs literature: the size of training domains significantly affects the quality of the fitted model. Intuitively this happens because one cannot in practise obtain ``perfect'' snapshots. Secondly, we see the importance of fitting force constants in order to significantly reduce the effect that size of the simulation domain has. Finally, our estimates provide a clear guidance on how energy, force and force-constant observations should be weighted in the least squares loss function, in particular suggesting the optimal balance $\efit \approx (\ffit)^2$ suggesting to put signifantly higher emphasis on the energy fit. 


\section{Numerical results}
\label{sec:numer}
We propose a concrete implementation of MLIPs inspired by our generalisation analysis of Theorem \ref{them:geometry} to confirm the analytical error estimates on model problems.

\subsection{Constructions of MLIPs}
\label{sec:sub:consACE}
%

\subsubsection{Parameterisation}
\label{sec:sub:sub:para}
First, we need to choose a parameterisation of the surrogate potential $\widetilde{V}$. Although a wide variety of choices is available nowadays, we have opted for the {\it linear} atomic cluster expansion (ACE)~\cite{2019-ship1, lysogorskiy2021performant, oord19} which has the advantage of achieving close to state of the art accuracy despite being a linear model~\cite{lysogorskiy2021performant}. However, there is no reason to believe that the specific choice of MLIP is essential in our tests. 
Briefly, in the ACE model, the surrogate potential $\tilde{V}$ is written as 
\begin{eqnarray}\label{eq:ace}
	\widetilde{{V}}(\pmb{g}) := V^{\rm ACE}(\pmb{g}; \{c_B\}_{B\in\pmb{B}}) = \sum_{B\in\pmb{B}} c_B B(\pmb{g}), 
\end{eqnarray}
%
where $B$ are the ACE basis functions and $\{c_B\}_{B\in\pmb{B}}$ are the parameters that we will estimate by minimizing a least squares loss. The basis functions $B$ are invariant under rotations, reflections and permutations of an atomic environment. 

A more detailed review of the ACE model and in particular its approximation parameters is provided in the Appendix~\ref{sec:ACE}. 


\subsubsection{Training sets and loss}
\label{sec:sub:sub:train}

Following our generalisation analysis (Theorem \ref{them:geometry}) we require that the ACE model matches the reference model in the sense of making $\efit, \ffit, \varepsilon^{\rm FC}$ and $\fcfit$ small. These measures of fit accuracy are specified in terms of max-norms over an infinite set of displacements, which is clearly computationally not tractable. At this point we make several departures from our rigorous analysis. 

We first introduce the training set, $\mathfrak{R}$: The complete neighbourhood $B_{\delta}(\bar{u}_L)$ used in the analysis is replaced with a finite number of random samples taken from $B_{\delta}(\bar{u}_L)$. We fix the perturbed parameter to be $0.01$ and denote the number of the configurations in $\mathfrak{R}$ as $N_{\rm train}:= \#\mathfrak{R}$. Analogously we also produce a test set. The number of configurations in training and test sets will be specified for each individual example.


Next, we consider the construction of a loss function inspired by our theory. We cannot optimize 
\[
\efit + \ffit + \varepsilon^{\rm FC} + \fcfit
\]
directly but we propose three {\it ad hoc} approximations of the {\it matching conditions}:
%
\begin{itemize}
    \item We replace the max-norm with an $\ell^2$-norm to obtain a linear least squares problem. 

    \item The force error $\ffit$ is defined in a dual norm in \eqref{eq:ffit}, but for sufficiently small training domains it is almost indistinguishable from a standard $\ell^2$-norm, namely $\big\|-\nabla \E_L(u_{L})+ \nabla \widetilde{\E}_L(u_{L})\big\|_{\ell^2}$.
    
    \item Finally, we drop the the force constant errors $\varepsilon^{\rm FC}$ and $\fcfit$ entirely from the loss function. In fact, we have found that constructing the training set $\mathfrak{R}$ as described above and only fitting forces and energies already results in a sufficiently good accuracy of $\varepsilon^{\rm FC}$. We will give a numerical verification of this statement in Section~\ref{sec:sub:numerics}, Table~\ref{table-params}.
	 %
	%
\end{itemize}


Given the foregoing approximations, the training set $\mathfrak{R}$ constructed above and the parameterisation defined by \eqref{eq:ace}, we determine the parameters $\{c_B\}$ by minimising the following loss function
\begin{align}\label{cost:energymix}
\mathcal{L}\big(\{c_B\}\big) := \sum_{u_R \in \mathfrak{R}}\Big(W_{\rm  E} \big|\E_L(u_R) - \widetilde{\E}_L(u_R; \{c_B\})\big|^2 + W_{\rm F}\big|\nabla \E_L(u_R) - \nabla \widetilde{\E}_L(u_R; \{c_B\}) \big|^2 \Big),
\end{align}
where $W_{\rm E}$ and $W_{\rm F}$ are additional weights that might depend on the configurations and observations. According to \eqref{eq:ergyerr} (or Table \ref{table-e-mix}), we choose $W_{E} \gg W_{F}$ in practice such that the balance $\efit \approx (\ffit)^2$ can be achieved. The details will be provided for different model problems in the next section. 

The loss function \eqref{cost:energymix} is quadratic in the parameters $\{c_B\}$ and can therefore be minimised by using a QR factorisation. In our implementation we use a rank-revealing QR (rr-QR) factorisation \cite{chan1987rank} which provides a mechanism analogous to Tychonov-regularisation~\cite{oord19}. The regularisation parameter for rr-QR factorisation is set to be $10^{-6}$ (we tested various values and it gave the best performance for all numerical experiments) throughout this work.


\subsection{Numerical results}
\label{sec:sub:numerics}

We present numerical tests for two examples: 
\begin{enumerate} 
\item {\it Toy model: } We consider various configurations of point defects (vacancies and interstitials) in a two dimensional triangular lattice. As the reference model we will use an embedded atom model (EAM) \cite{Daw1984a} instead of an electronic structure model. This highly simplified scenario, and the fact that elastic fields decay more slowly in two dimensions, allows us to more easily perform large-scale simulation in which we can most clearly observe the expected convergence results.

\item {\it Tight-Binding model: } We will also perform tests on configurations of multiple vacancies in three dimensional Silicon (Si). For these tests we employ the NRL tight binding model \cite{cohen94,mehl96,papaconstantopoulos15} as the reference model; see Appendix \ref{sec:NRL} for a brief review.
\end{enumerate} 

All numerical tests are implemented in open-source {\tt Julia} packages {\tt ACE1.jl} \cite{gitACE} (for the ACE model) and {\tt SKTB.jl} \cite{gitSKTB} (for the NRL tight binding model). 


\subsubsection{Two-dimensional toy model.}
\label{sec:sub:eamW}
To demonstrate the main theory (Theorem~\ref{them:geometry}) most clearly, we first explore two-dimensional multiple point defects systems where the reference model is given by an EAM potential \cite{Daw1984a}. 
We consider a two dimensional triangular lattice 
\begin{eqnarray}\label{eq:mA}
\L^{\rm h}:=\mA \Z^2, \quad \textrm{where} \quad \mA=r_0 \cdot\Bigg(\begin{array}{cc}
    1 & 1/2 \\
    0 & \sqrt{3}/2
\end{array} \Bigg),
\end{eqnarray}
with $r_0$ chosen such that the triangular lattice becomes the ground state of the reference EAM potential.
%
Using the triangular lattice and an EAM potential as the reference model instead of an actual ab initio model means that we can more easily perform large-scale tests in a wider parameter regime in order to narrow down the best choices. In this example, the size of the simulation domain $\L_N$ is chosen to be $N=60r_0$. The separation distance $L_{\D}$ is identical to the size of training domain $L$ for the sake of simplicity.

{\it Multi-vacancies:} We first consider the multi-vacancies case. The corresponding simulation domain and training domain for two separated vacancies ($n_{\D}=2$) are shown in Figure \ref{fig:multivac}. The illustration of other two cases ($n_{\D}=3,4$) considered in this example is provided in Figure~\ref{fig:multivac_app} in the Appendix~\ref{sec:numer_supp}.

\begin{figure}[!htb]
\centering
	\includegraphics[height=5.0cm]{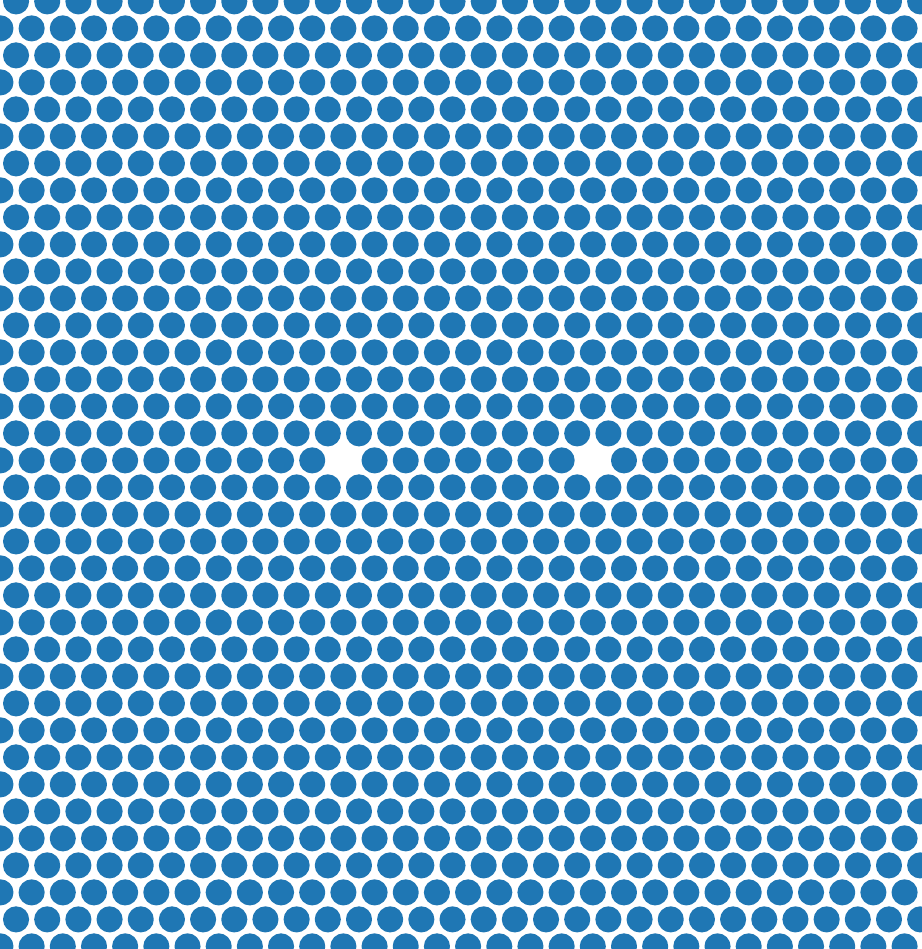}
	\qquad \quad 
	\includegraphics[height=3.8cm]{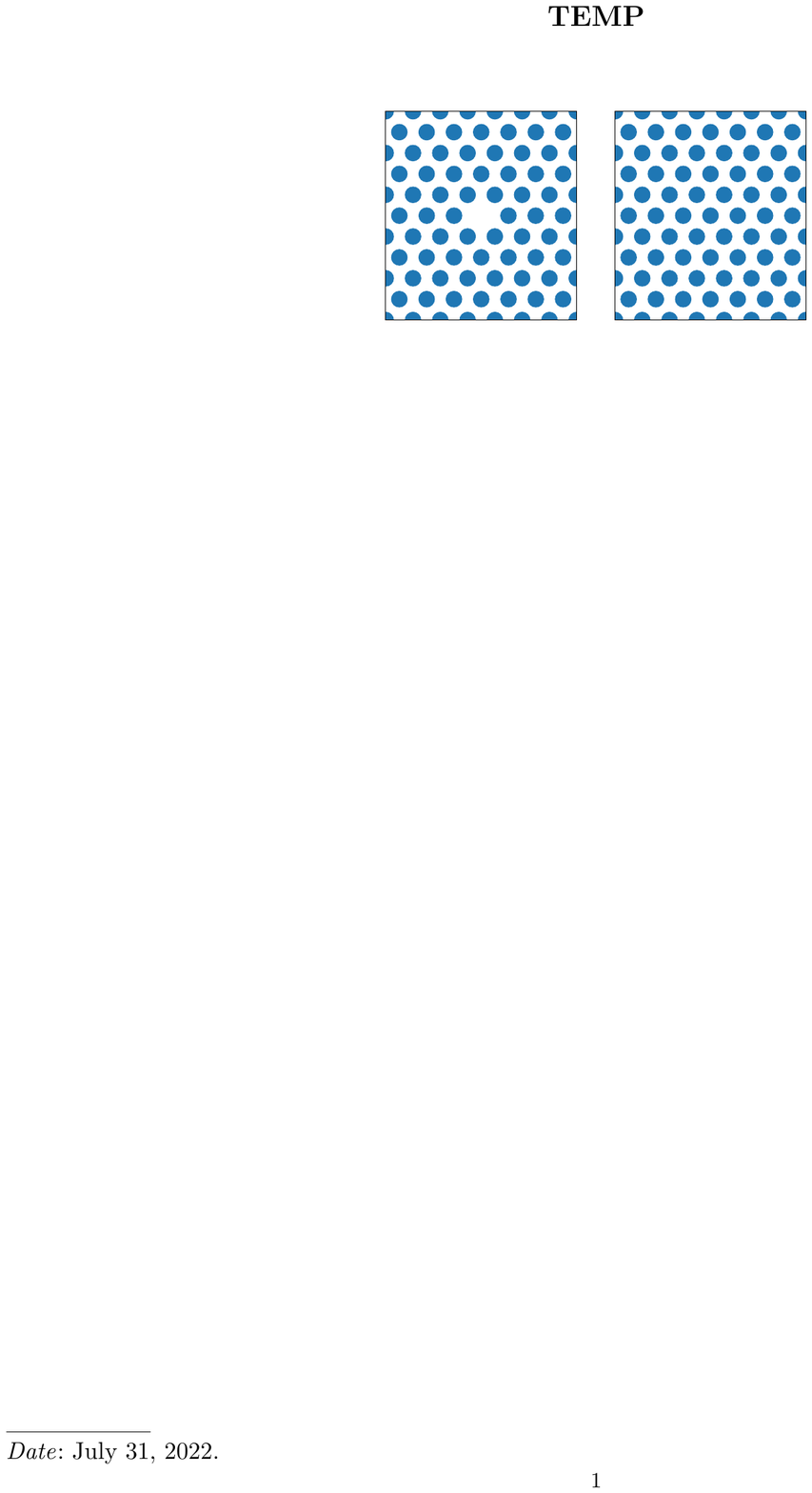}
    \caption{EAM W reference model: Illustration of the simulation domain (left) and the training domains (right) for two separated vacancies with $L=L_{\D}=8r_0$.}
    \label{fig:multivac}
\end{figure}

The MLIPs are fitted by following the construction in Section~\ref{sec:sub:consACE}, where the parameters in establishing the basis functions $B$ are taken from \cite[Section 7.5]{2019-ship1}. The number of configurations in training and testing sets are set to be $N_{\rm train}=200$ and $N_{\rm test}=50$, respectively. We choose the additional weights for energy and force in \eqref{cost:energymix} as $W_{\rm E}=100$ and $W_{\rm F}=1$ in order to balance the matching conditions $\efit \approx (\ffit)^2$ (cf. Table~\ref{table-e-mix}). The fitting parameters, fitting accuracy in terms of the root mean square error (RMSE) on the testing sets and fitting time for various size of training domain $L$ are given in Table \ref{table-params} in the Appendix~\ref{sec:numer_supp}. The accuracy of the force constant on the homogeneous lattice is also verified in Table \ref{table-params}. As we discussed in Section~\ref{sec:sub:sub:train}, the force constants are already fitted well (relative RMSE are about $5\%$) despite only fitting forces and energies.

We first test the convergence of the geometry error $\|D\bar{u} - D\tilde{u}\|_{\ell^2_{\mathcal{N}}}$ and the error in energy $|E(\bar{u})-\widetilde{E}(\tilde{u})|$ with respect to the RMSE by studying the case of two separated vacancies (Figure \ref{fig:multivac}). Figure \ref{fig:errorvsrmse} shows that, for different size of training domain $L$, the error curves of geometry error and error in energy decrease near linearly and quadratically respectively as RMSE decreases, which perfectly matches our theoretical predictions from Theorem \ref{them:geometry}.

\begin{figure}[!htb]
    \centering
    \includegraphics[height=5.5cm]{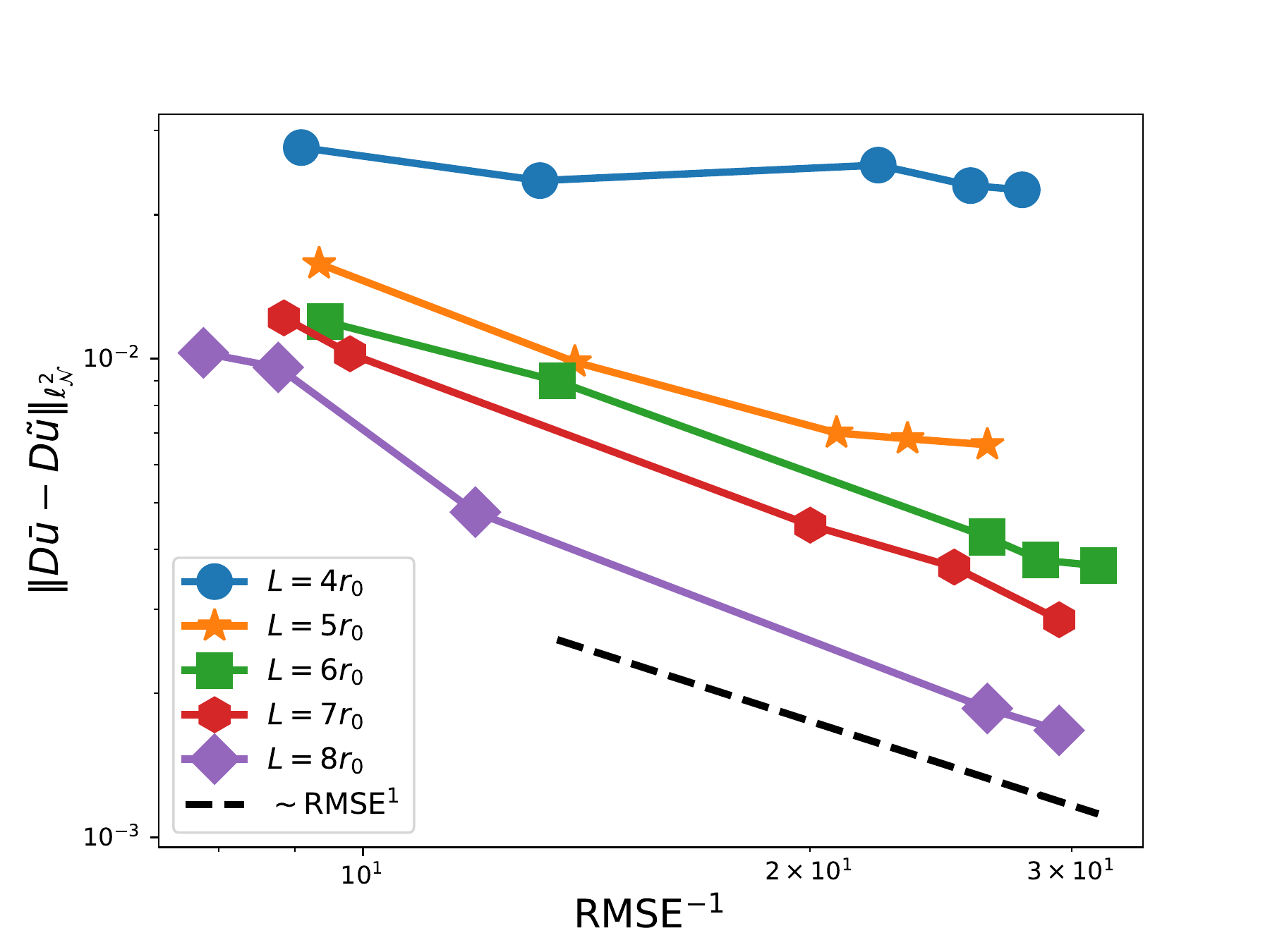}
    \includegraphics[height=5.5cm]{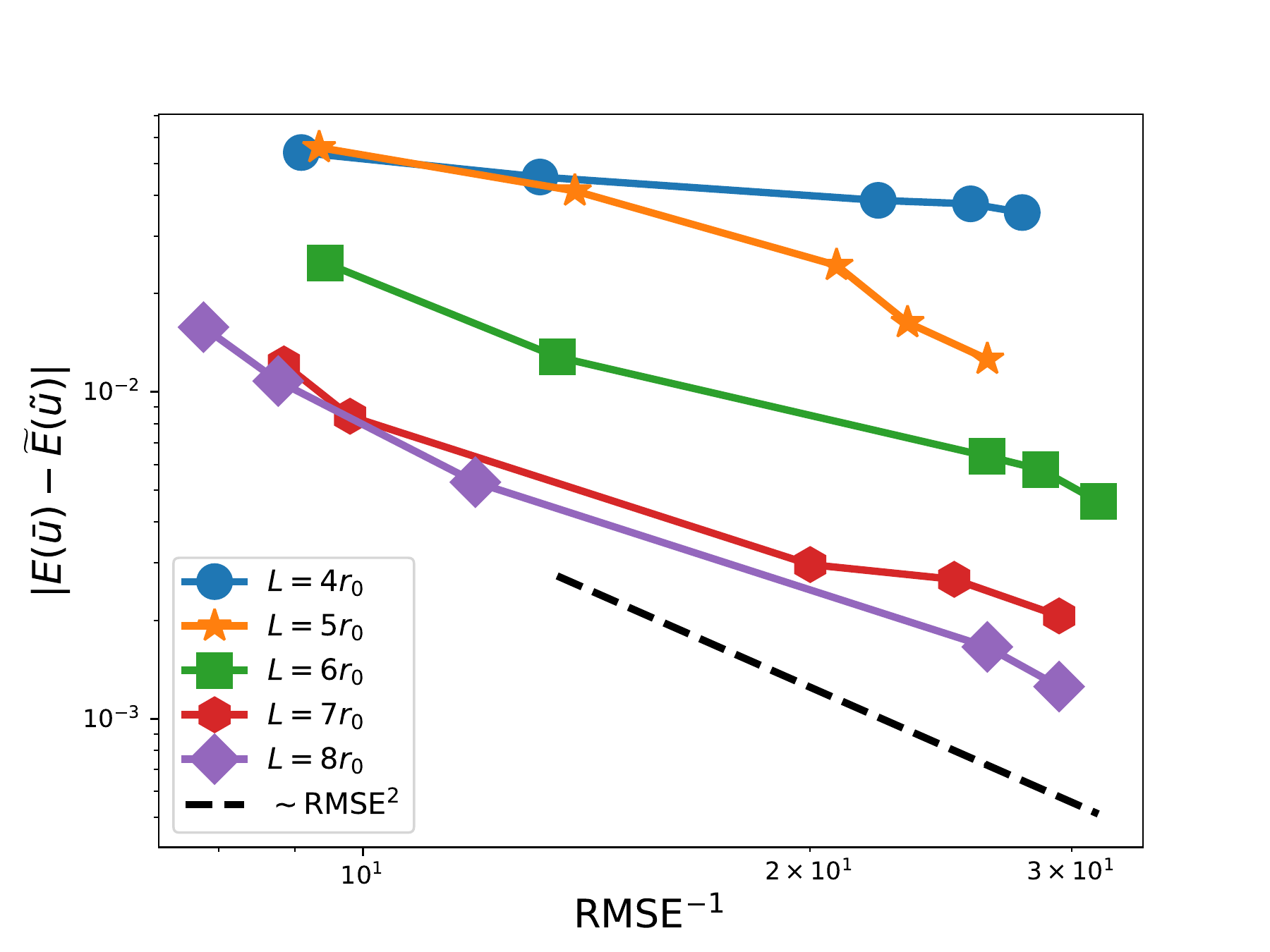}
    \caption{2D toy model: Geometry error (left) and error in energy (right) v.s. RMSE for two separated vacancies case.}
    \label{fig:errorvsrmse}
\end{figure}

Figure \ref{fig:tri} plots the convergence of the geometry error $\|D\bar{u} - D\tilde{u}\|_{\ell^2_{\mathcal{N}}}$ and the error in energy $|E(\bar{u})-\widetilde{E}(\tilde{u})|$ against the size of training domain $L$. We observe that the convergence rates of geometry error perfectly match our theoretical predictions from Theorem~\ref{them:geometry} for all multi-vacancies cases. The only exception is the convergence of error in energy, a faster convergence rate is observed numerically. We speculate that this is due to the additional symmetry of the defective lattice for the cases we consider here, and that this leads to additional cancellation that our general analysis does not capture. 

\begin{figure}
	\centering 
		\includegraphics[height=5.5cm]{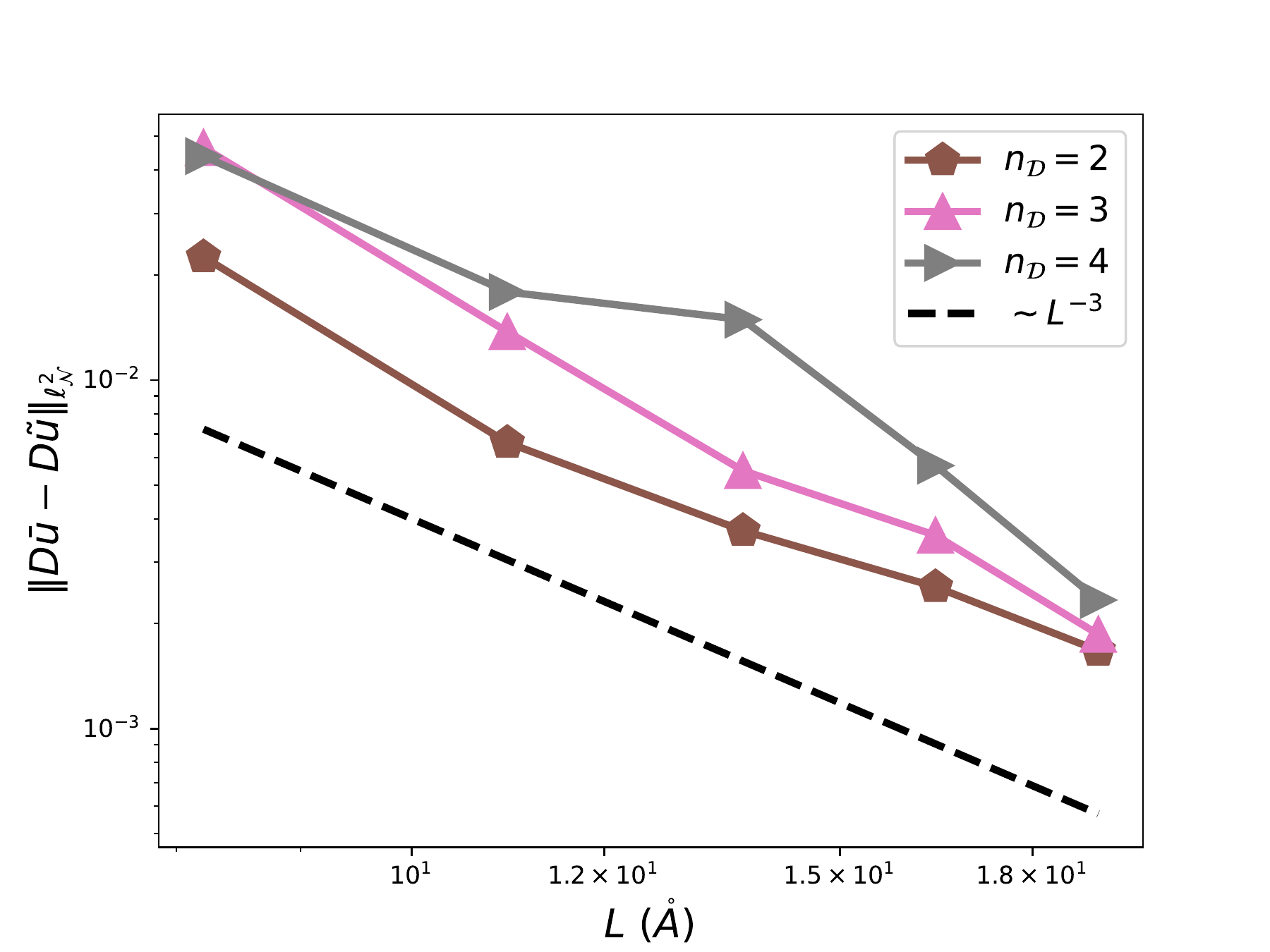}
		\includegraphics[height=5.5cm]{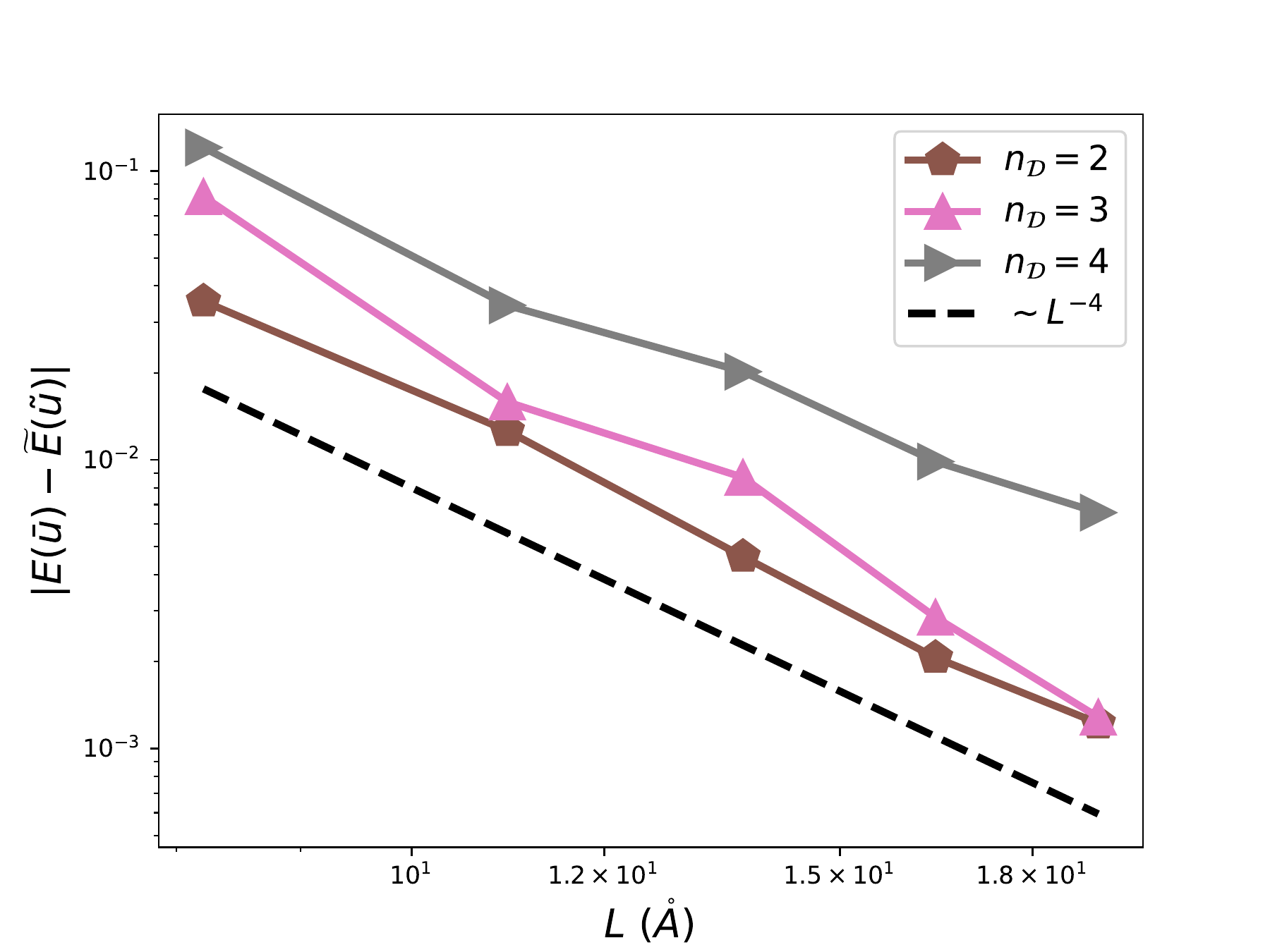}
	\caption{2D toy model: Convergence rates for geometry error (left) and error in energy (right) for multi-vacancies, ${\rm RMSE}\approx 0.03$.}
	\label{fig:tri}
\end{figure}

{\it Interstitials-vacancies:} Next, we test the interstitials-vacancies case. Figure~\ref{fig:intvac} plots the corresponding simulation domain and training domain for one interstitial and one vacancy ($n_{\D}=2$). The case of one interstitial and three vacancies ($n_{\D}=4$) illustrated in Figure~\ref{fig:intvac_app} in the Appendix~\ref{sec:numer_supp} will also be considered in this example.

\begin{figure}
    \centering
	\includegraphics[height=5.0cm]{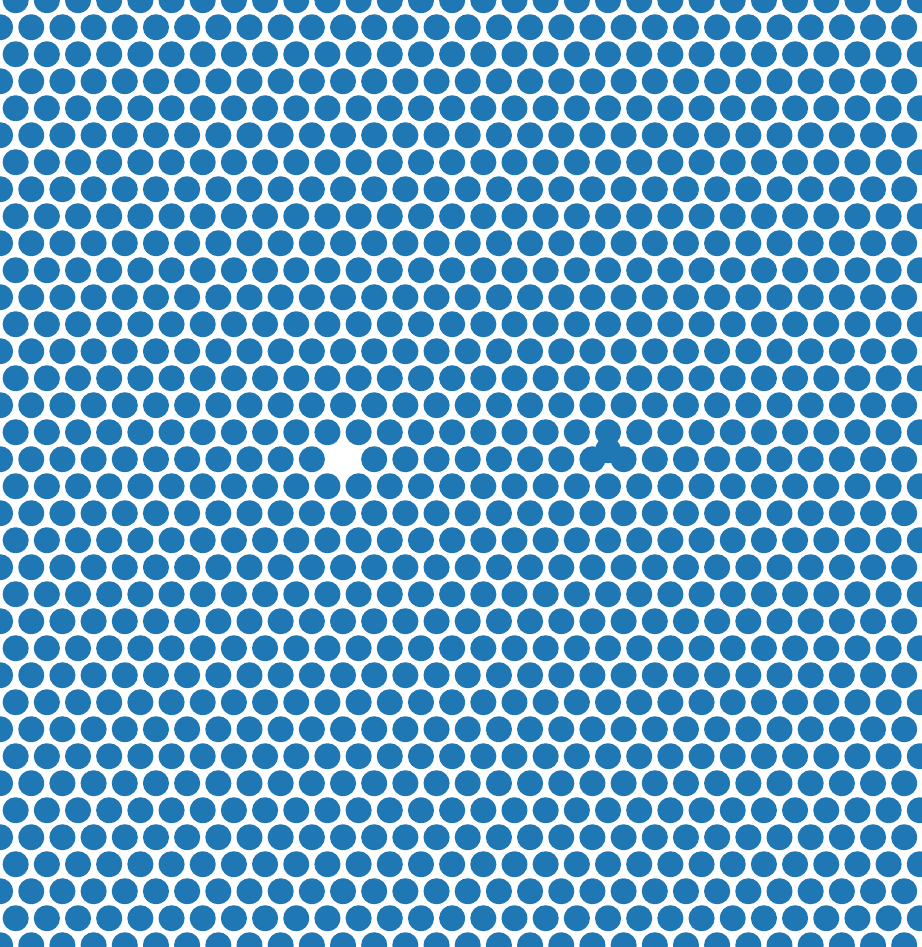} 
	\qquad 
	\includegraphics[height=3.8cm]{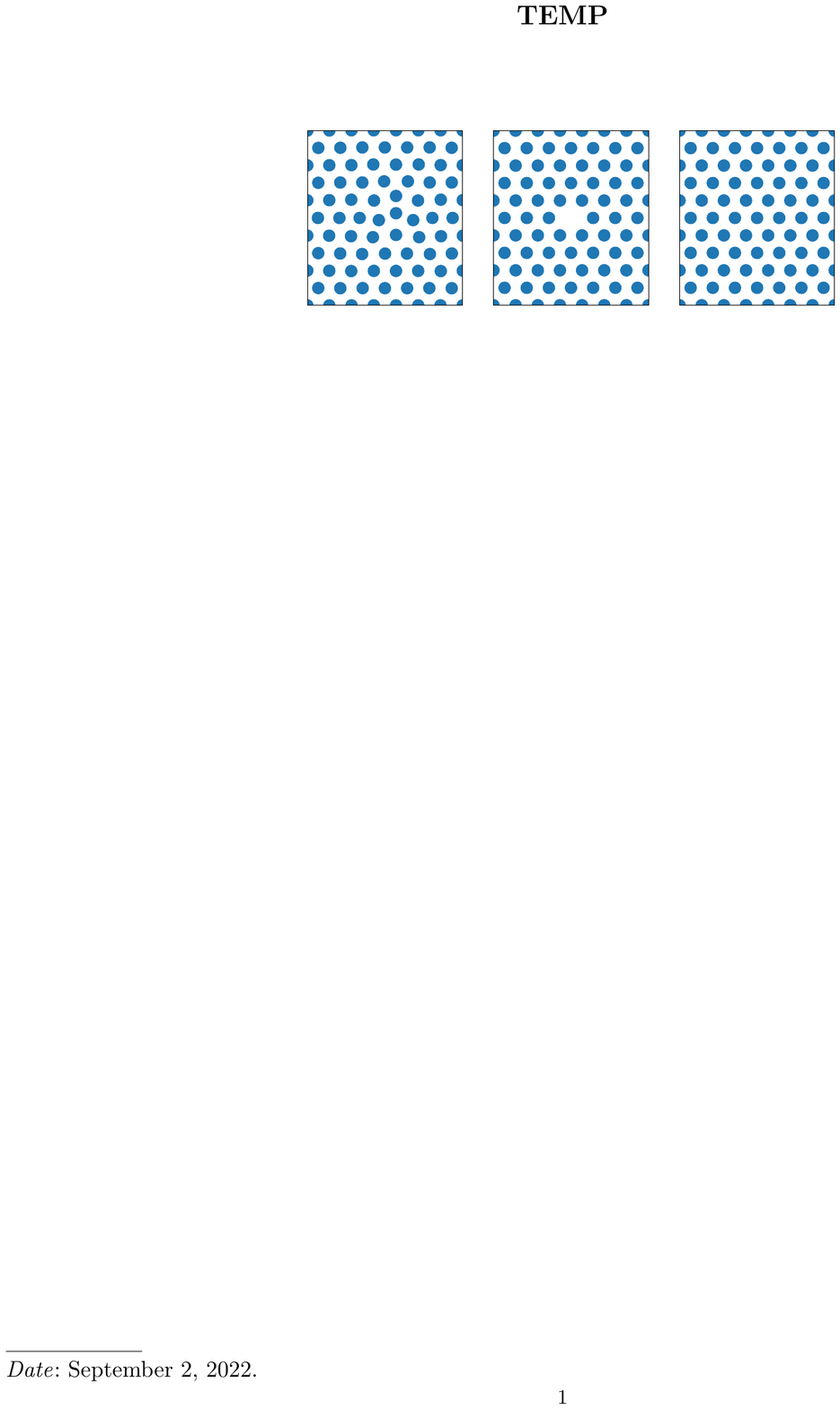}
    \caption{2D toy model: Illustration of the simulation domain (left) and the training domains (right) for one interstitial and one vacancy. }
    \label{fig:intvac}
\end{figure}

We take $N^{\rm vac}_{\rm train}=N^{\rm int}_{\rm train}=100$, and $N^{\rm vac}_{\rm test}=N^{\rm int}_{\rm test}=25$ for both cases. To balance the errors ($\efit \approx (\ffit)^2$), the additional weights in \eqref{cost:energymix} are chosen to be: $W^{\rm int}_{\rm E}=100$ and $W^{\rm int}_{\rm F}=10$ for interstitial while $W^{\rm vac}_{\rm E}=10$ and $W^{\rm vac}_{\rm F}=1$ for vacancy. Here we put more weights on the interstitial to express the fact that the it generates a larger distortion of the surrounding lattice thus making it more challenging to fit than the vacancy cores. 

As presented in the previous case, we take $n_{\D}=2$ (cf. Figure~\ref{fig:intvac}) as an example to study the convergence of geometry error and error in energy with respect to the RMSE on testing sets. Similar to the multi-vacancies case, Figure~\ref{fig:errorvsrmse:int} shows that the errors are reduced as the accuracy of corresponding ACE models is improved. 

\begin{figure}
    \centering
    \includegraphics[height=5.5cm]{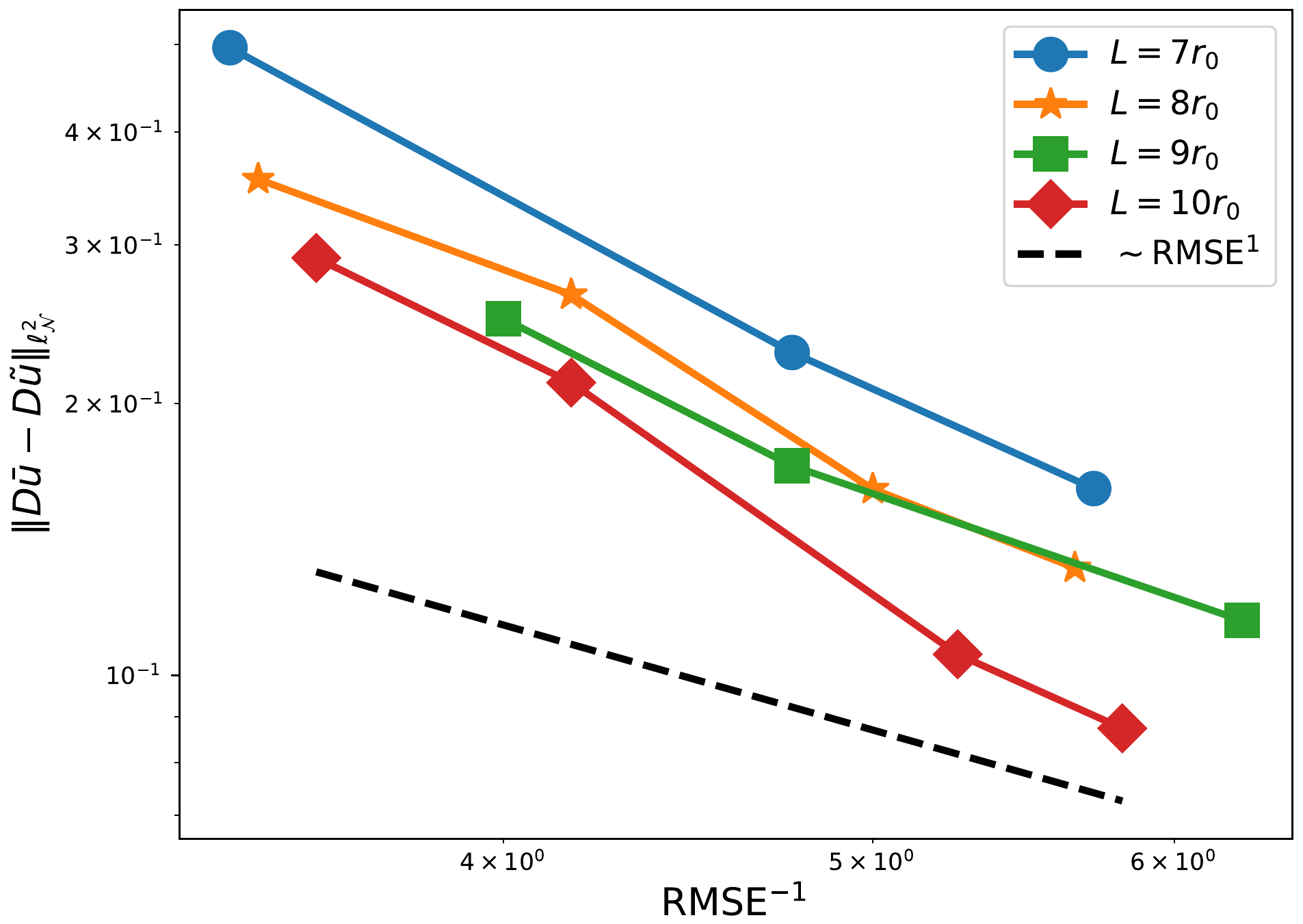}
    \includegraphics[height=5.5cm]{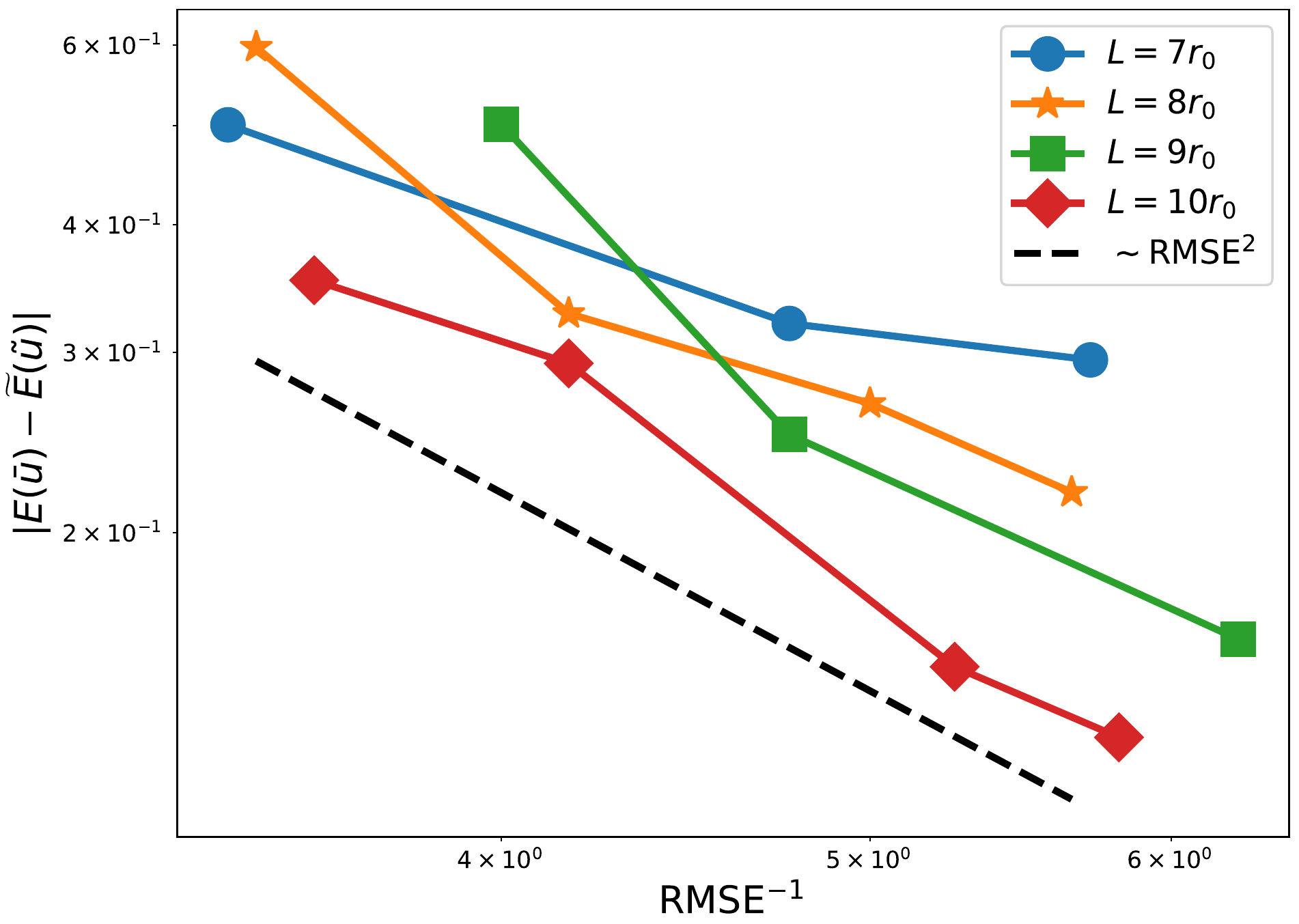}
    \caption{2D toy model: Geometry error (left) and error in energy (right) v.s. RMSE for one interstitial and one vacancy case.  }
    \label{fig:errorvsrmse:int}
\end{figure}

The decay of geometry error $\|D\bar{u} - D\tilde{u}\|_{\ell^2_{\mathcal{N}}}$ and corresponding error in energy $|E(\bar{u})-\widetilde{E}(\tilde{u})|$ against the size of training domain $L$ are shown in Figure~\ref{fig:tri_intvac}. We observe that the convergence rates roughly match our theoretical predictions from Theorem~\ref{them:geometry} which are summarized in Table~\ref{table-e-mix}.

\begin{figure}
	\centering 
		\includegraphics[height=5.5cm]{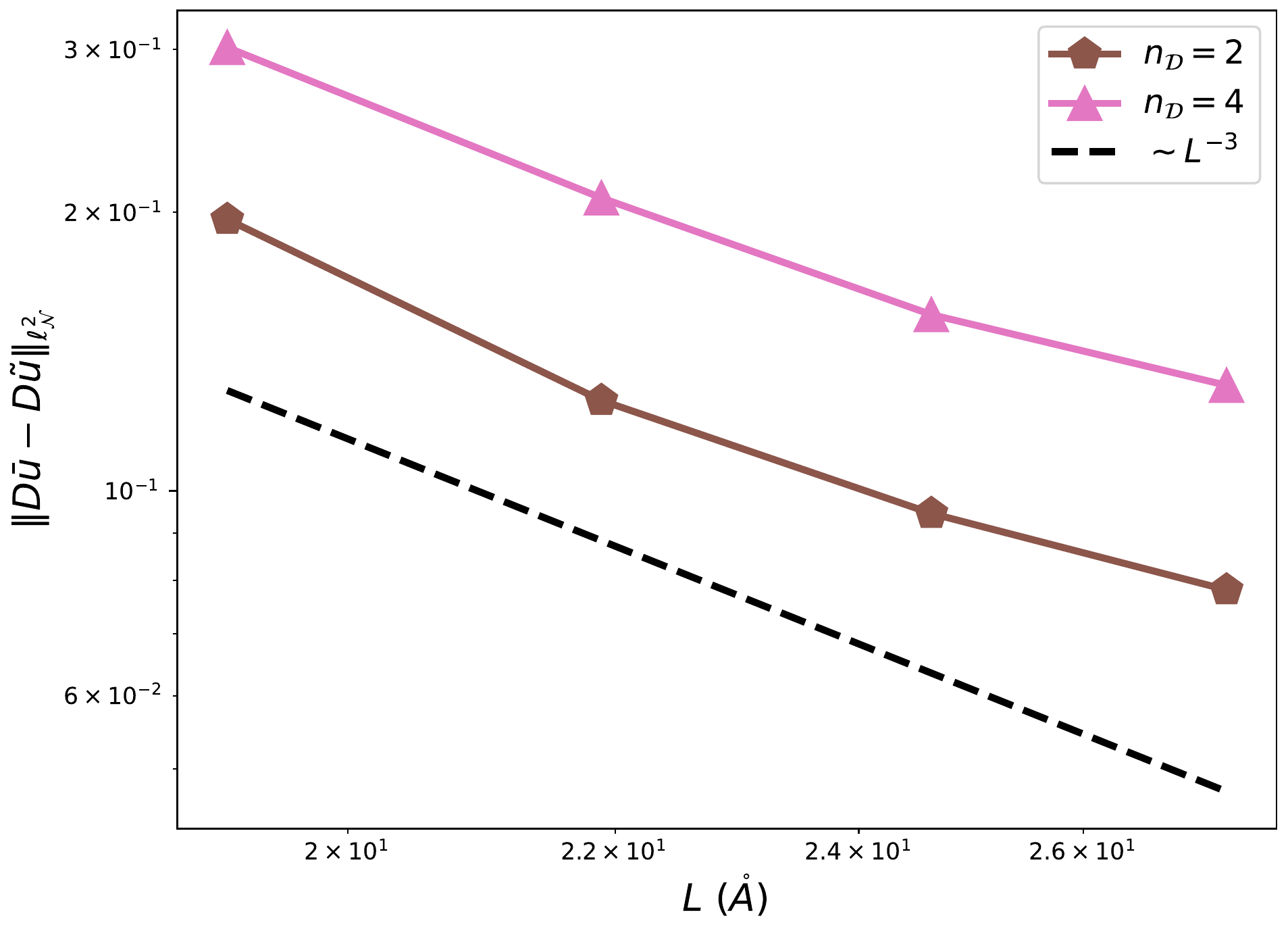}
		\includegraphics[height=5.5cm]{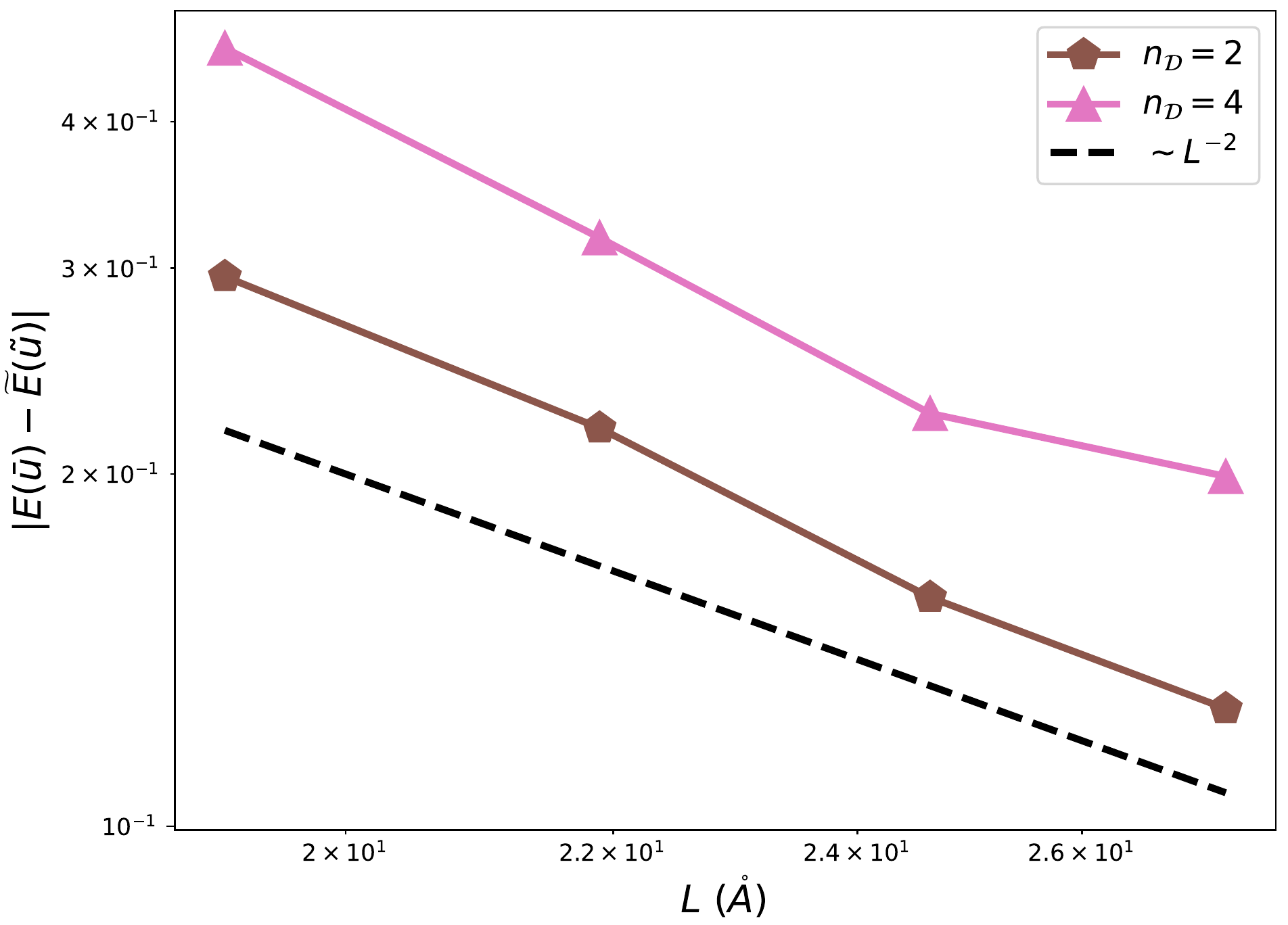}
	\caption{2D toy model: Convergence rates for geometry error (left) and error in energy (right) for interstitials-vacancies, ${\rm RMSE}\approx 0.15$.}
	\label{fig:tri_intvac}
\end{figure} 

\subsubsection{Convergence results for the NRL-TB model.}
\label{sec:sub:NRLTB}

We now move to testing our schemes when the reference model is an electronic structure model. We choose NRL-TB \cite{cohen94} as the reference model, which is a successful tight-binding model for Si; see Appendix \ref{sec:NRL} for a short review. Our choice of Si as the material is due to the fact that it is a semi-conducting material for which we have also strong theoretical and numerical evidence for the localisation of its interatomic forces~\cite{chen18}, which is an essential ingredient in our analysis. 

The simulation domain is constructed by $10^3$ unit cells, which contains 8000 Si atoms with periodic boundary conditions in all three directions. We select two vacancy sites which are separated from each other in Silicon bulk crystal. The MLIPs are fitted by following the construction in Section \ref{sec:sub:consACE}, where the parameters in building the basis functions $B$ for Si are taken from \cite[Section 7.5]{2019-ship1}. The total number of the configurations in the training and testing sets and the additional weights in \eqref{cost:energymix} are chosen to be the same as that in multi-vacancies presented in the previous section.

The convergence of geometry error and energy error against RMSE for NRL-TB Si model is shown in Figure~\ref{fig:errorvsrmse:3D}, where the predicted convergence is again observed for this electronic structure model. Figure \ref{fig:3D_vac} plots the decay of geometry error $\|D\bar{u} - D\tilde{u}\|_{\ell^2_{\mathcal{N}}}$ and corresponding error in energy $|E(\bar{u})-\widetilde{E}(\tilde{u})|$ against the size of training domain $L$. We observe that the convergence rates again perfectly match our theoretical predictions from Theorem~\ref{them:geometry} and Table \ref{table-e-mix} for NRL-TB Si reference model.

\begin{figure}
    \centering
    \includegraphics[height=5.5cm]{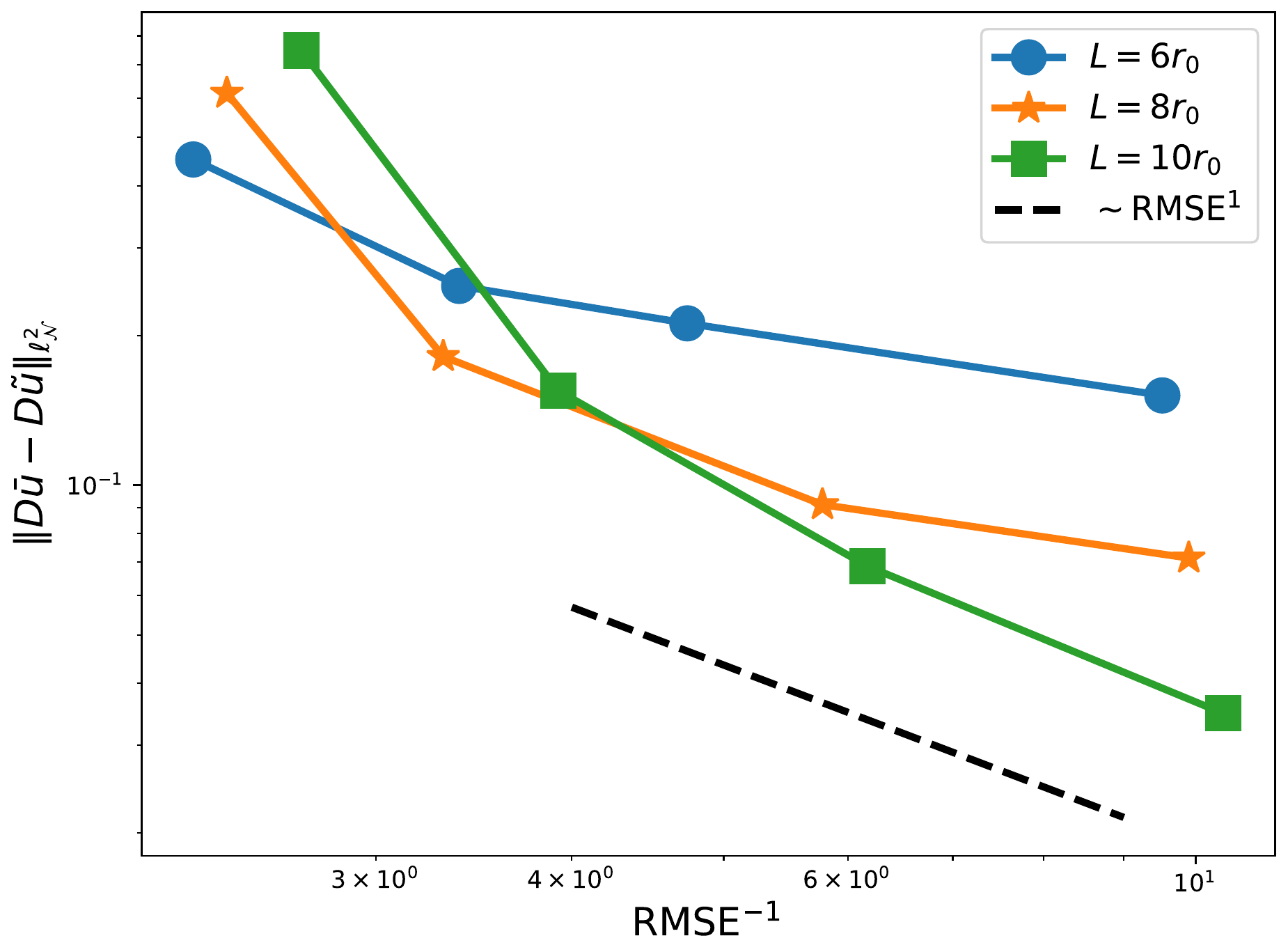}\quad
    \includegraphics[height=5.5cm]{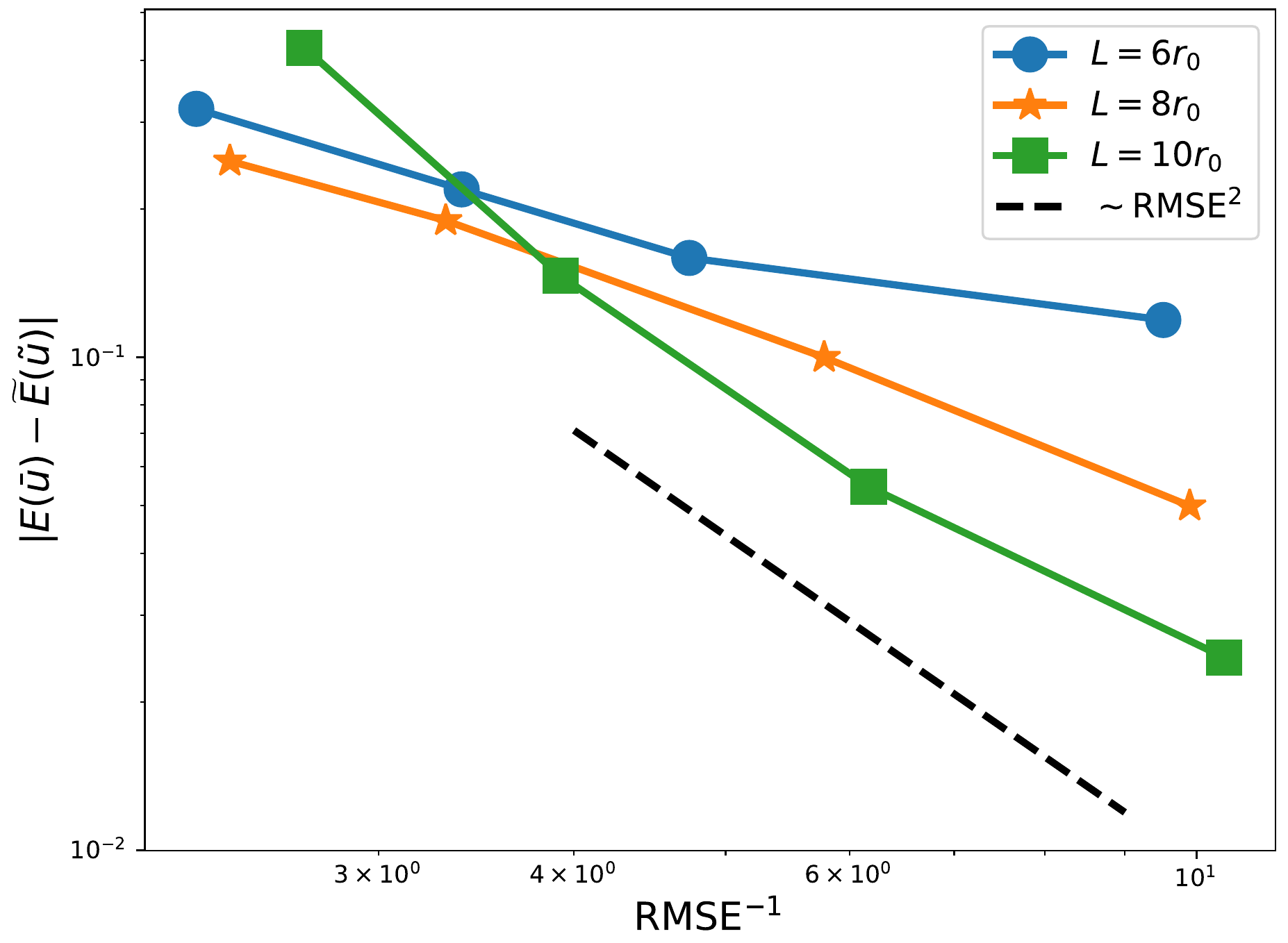}
    \caption{NRL-TB Si model: Geometry error (left) and error in energy (right) v.s. RMSE.}
    \label{fig:errorvsrmse:3D}
\end{figure}

\begin{figure}
	\centering 
		\includegraphics[height=5.5cm]{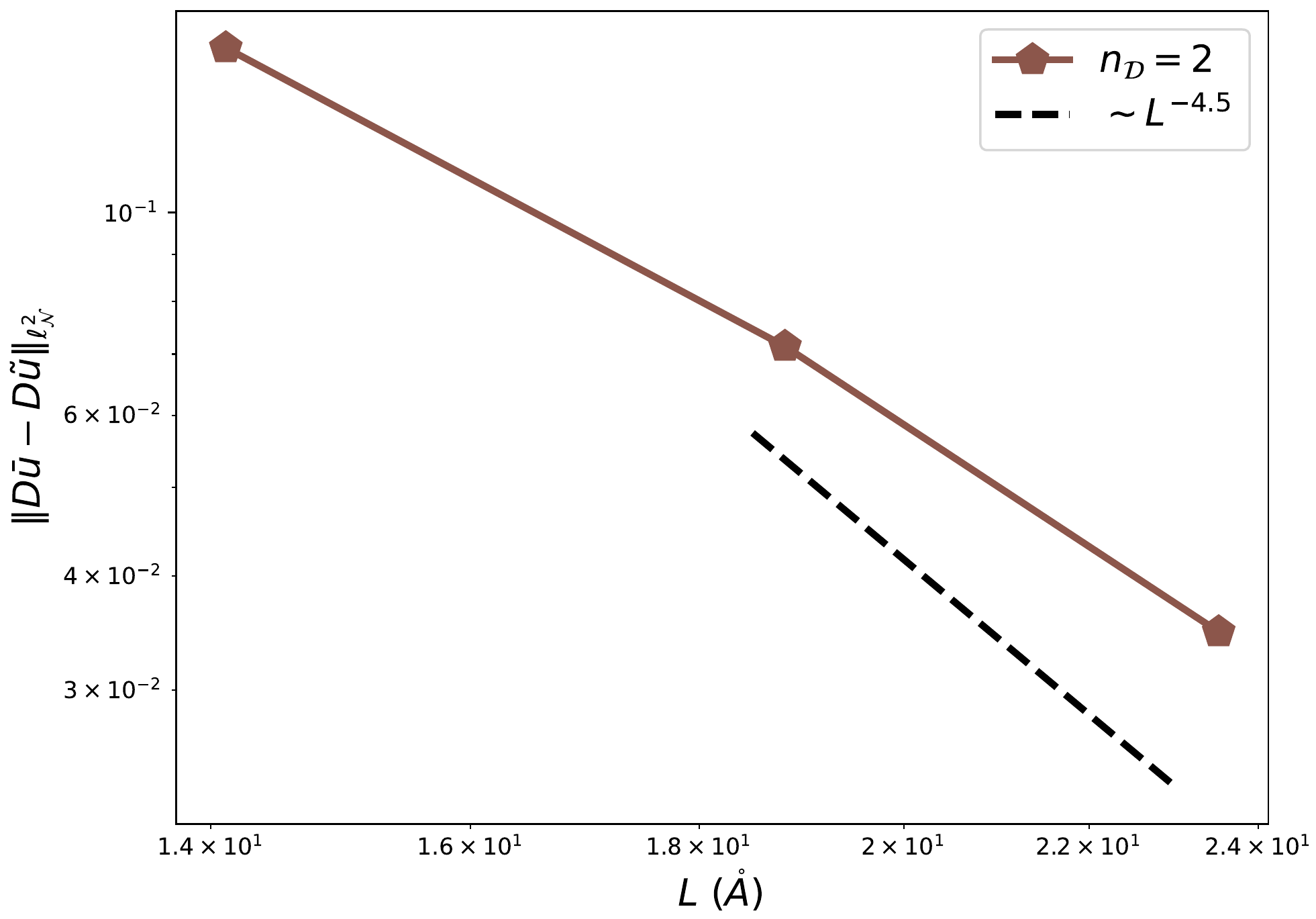}
		\includegraphics[height=5.5cm]{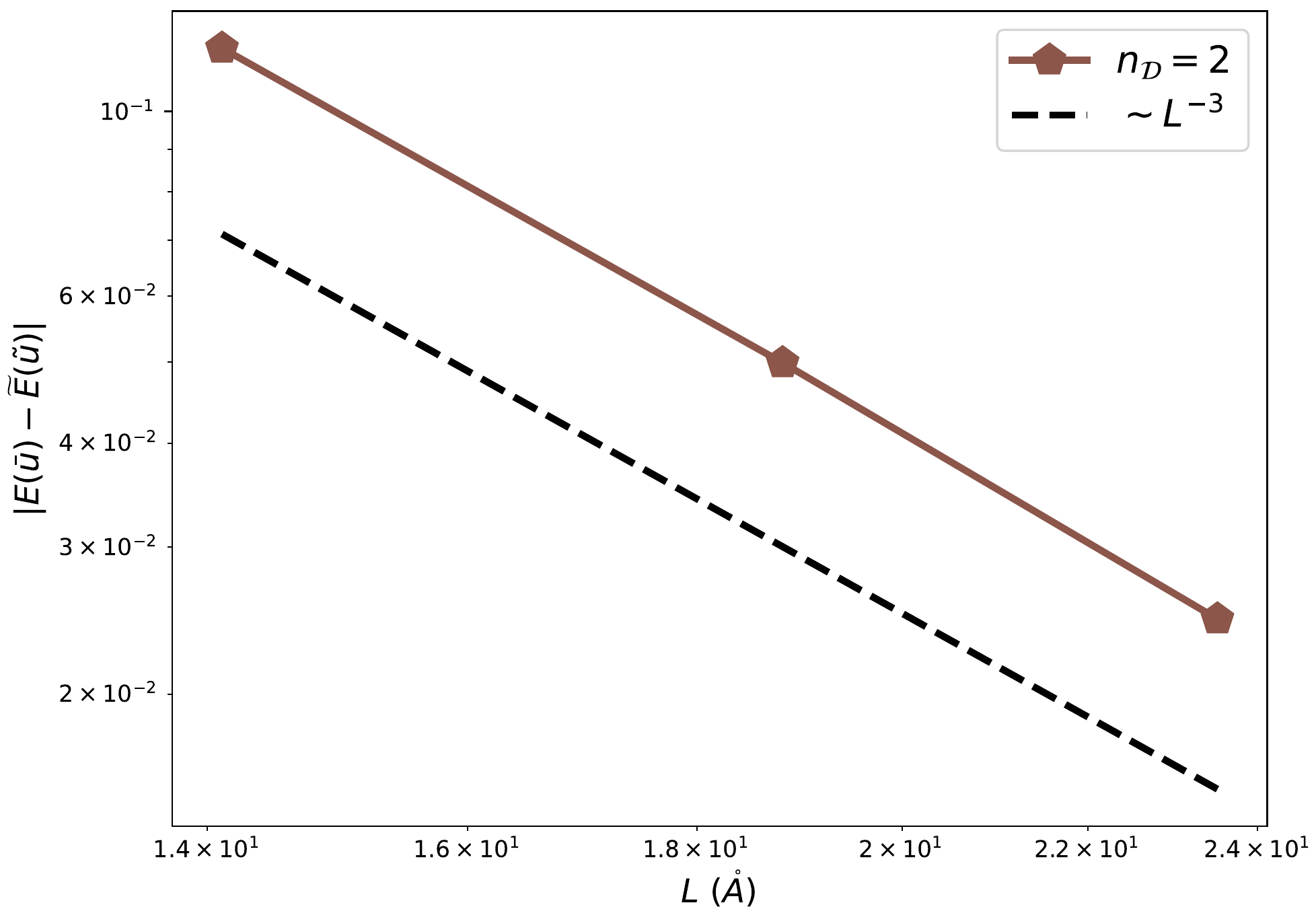}
	\caption{NRL-TB Si model: Convergence rates for geometry error (left) and error in energy (right), ${\rm RMSE}\approx 0.10$.}
	\label{fig:3D_vac}
\end{figure} 

\section{Conclusion}
\label{sec:conclu}
We proposed a framework for a generalisation analysis in a multi-scale setting, and used it to investigate the error propagation from fitting MLIPs on a small training domain to making predictions on a large simulation domain. 
As a prototypical example, we apply the framework to the case of simulating multiple (weakly interacting) point defects in a crystalline solid. Our analysis identifies what observations one should acquire from the reference model to obtain accurate predictions in this case. 
Our theoretical results partially justify existing best practices in the MLIP literature, but also provide more fine-grained qualitative information about how prediction accuracy depends on the choice of training data. 
This approach also suggests a new perspective on how to approach the collection of training data and the design of loss functions. 

Our presentation here is restricted to simple-species Bravais lattices and point defects. Generalisations do require additional technical difficulties to be overcome, but there appears to be no fundamental limitation to extend the method and the results to multi-lattices and a range of other defects in some form. To conclude, we briefly discuss some of these possibilities as well as limitations which deserve further mathematical analysis and algorithmic developments. 

\begin{itemize}
	\item {\it More complex crystalline structures:} As mentioned above, the extension to multi-lattices is conceptually straightforward and the necessary technical details should be addressed in depth.

    \item {\it Straight dislocations:} The extension to straight dislocations appears straightforward applying the techniques of~\cite{2021-qmmm3, Ehrlacher16, 2014-dislift}. We expect that the error estimates for straight dislocations depend not only on the force error but also the {\it matching condition} on the linear elasticity, even the nonlinear elasticity (virials) due to the long-range elastic field. 
    
    \item {\it Grain boundaries or curved dislocation lines:} These more complex crystalline defects require a much more significant degree of extrapolation than point defects or straight dislocations. 
	  Both simulations and rigorous analysis appear to  be both conceptually and technically much more challenging. While our overarching strategies should still apply, it is far less clear whether our methodologies in this paper can be applied directly.
    
    
    \item {\it Uncertainty estimation:} It is common knowledge that MLIPs have a fundamental limitation in that they lack a physical model for the phenomenon being predicted and thus have unknown accuracy when extrapolating beyond their training set. The uncertainty quantification (UQ) capabilities would be included to address this problem. Analyzing the propagation of uncertainty in the training procedure to predicted properties could be understood from a Bayesian statistics perspective, where some recent works~\cite{bartok2022improved, musil2019fast} should provide appropriate references. 
\end{itemize}


\section{Proofs}
\label{sec:proof}

\subsection{Preliminaries}
\label{sec:sub:pre}
In this section, we introduce the concepts needed in the proofs of the main results. We review the framework of~\cite{chen19, Ehrlacher16} to formulate the equilibration of a single point defect ($n_{\D}=1$, vacancy or interstitial) as a variational problem in a discrete energy space and then give the strong stability assumption~\asS, which assumes the existence of a single stable core in the infinite lattice $\L$. 

The deformed configuration of the infinite lattice $\L$ is a map $y:\L\rightarrow\R^d$.
We can decompose the configuration $y$ into
\begin{eqnarray}\label{config_y_u}
y(\ell) = x(\ell) + u(\ell) = \ell + u(\ell) \qquad\forall~\ell\in\Lambda,
\end{eqnarray}
where $x(\ell)=\ell$ is a linear map representing a crystalline reference configuration.
The set of possible atomic configurations is
\begin{align}\label{eq:admiss}
\Adm_{0}(\L) &:= \bigcup_{\mathfrak{m}>0} \Adm_{\mathfrak{m}}(\L)
\qquad \text{with} \\
\Adm_{\mathfrak{m}}(\L) &:= \left\{ y:\L \rightarrow \R^{d}, ~
|y(\ell)-y(m)| > \mathfrak{m} |\ell-m|
\quad\forall~  \ell, m \in \L \right\}, \nonumber
\end{align}
where the parameter $\mathfrak{m}>0$ qualifies the accumulation of atoms.

For site $\ell\in\L$, we define the nearest neighbours set $\mathcal{N}(\ell)$ as 
\begin{align}\label{def1:Nl} 
\mathcal{N}(\ell) :=& \left\{ \, m \in \L \setminus \ell~\Big|~\exists \, a \in \mathbb{R}^{d} \text{ s.t. }
|a - \ell| = |a - m| \leq |a - k| \quad \forall \, k \in \L \, \right\},
\end{align}
which is applied in the definition of the energy norm $\|\cdot\|_{\ell^2_{\mathcal{N}}}$ (cf. \eqref{eq: nn norm}).
We introduce the discrete energy space for infinite lattice
\begin{align}\label{space:UsH}
\UsH(\L) := \big\{u:\L\rightarrow\mathbb{R}^{d} ~\big\lvert~ \|Du\|_{\ell^2_{\mathcal{N}}(\L)}<\infty \big\},
\end{align}
with the associated semi-norm $\|Du\|_{\ell^2_{\mathcal{N}}}$.
We also define the following subspace of compact displacements
\begin{align}\label{space:Uc}
\Us^{\rm c}(\L) := \big\{u:\L\rightarrow\mathbb{R}^{d} ~\big\lvert~ \exists~R >0~{\rm s.t.}~u = {\rm const}~{\rm in}~\L\setminus B_{R}\big\}.
\end{align}
The associated class of admissible displacements is given by
\begin{eqnarray*}
	\Admu(\L):= \big\{ u\in\UsH(\L) ~:~ x +u\in\Adm_0(\L) \big\}.
\end{eqnarray*}

In this paper, we make the following assumptions on the regularity and locality of the site potentials, which has been justified for some basic quantum mechanic models \cite{chen18,chen16,chen19tb, co2020}.
We refer to \cite[\S 2.3 and \S 4]{chen19} for discussions of more general site potentials.
\begin{itemize}
	\label{as:SE:pr}
	\item[\assERL]
	{\it Regularity and locality:}
	For all $\ell \in \L$, $V_{\ell}\big(Du(\ell)\big)$ possesses partial derivatives up to $\mathfrak{n}$-th order with $\n\geq 3$. For $j=1,\ldots,\n$, there exist constants $C_j$ and $\eta_j$ such that
	\begin{eqnarray}
	\label{eq:Vloc}
	\big|V_{\ell,{\bm \rho}}\big(Du(\ell)\big)\big|  \leq
	C_j \exp\Big(-\eta_j\sum^j_{l=1}|{\bm \rho}_l|\Big)
	\end{eqnarray} 
	for all $\ell \in \L$ and ${\bm \rho} \in (\L - \ell)^{j}$. 
\end{itemize}

Although we defined the site potentials on infinite stencils $(\R^d)^{\L-\ell}$, the setting also applies to finite systems or to finite range interactions. 
It is only necessary to assume in this case that the potential $V_{\ell}(\pmb{g})$ does not depend on the reference sites $\pmb{g}_{\rho}$ outside the interaction range.
In particular, we will denote by $V_{\ell}^{\Omega}$ the site potential of a finite system with the reference configuration lying in $\L\cap\Omega$.

Following the results in \cite[Theorem 2.1]{chen19}, the energy-difference functional for infinite lattice reads
\begin{eqnarray}\label{energy-difference}
\E^{\L}(u) := \sum_{\ell\in\Lambda} V_{\ell}\big(Du(\ell)\big).
\end{eqnarray}
The corresponding variational problem for the equilibrium state is
\begin{equation}\label{eq:variational-problem}
\bar{u}^{\rm core} \in \arg\min \big\{ \E^{\L}(u),~u \in \Admu(\L) \big\},
\end{equation}
where ``$\arg\min$'' is understood as the set of local minimizers.

An auxiliary energy-functional needed in the following analysis is the energy of the homogeneous (defect-free) lattice
\begin{eqnarray}\label{energy-homogeneous}
\E^{\rm h}(u) := \sum_{\ell\in\Lhom} V^{\rm h}\big(Du(\ell)\big).
\end{eqnarray}

We are ready to give the assumption on the existence of a single strongly stable core in infinite lattice. 
Let $\bar{u}^{\rm core}$ be the corresponding local minimizer of \eqref{eq:variational-problem}. Then we give the rigorous formulation of \asS~as follows
\begin{eqnarray}\label{eq:def_S}
\exists~\bar{c}_{\L}>0~\textrm{s.t.}~ 
		\big\< \delta^2\E^{\L}(\bar{u}^{\rm core}) v , v\big\> \geq \bar{c}_{\L} \|Dv\|^2_{\ell^2_{\mathcal{N}}} \qquad\forall~v\in\UsH(\L).
\end{eqnarray}

To map the displacements defined on $\L$ to $\L_N^{\rm per}$, we introduce the operator $T_N^{\rm per}: \Us^{1,2}(\L) \rightarrow \Us^{\rm per}_{N}(\L_N)$. One possible construction is given in \cite[Section 7.3]{Ehrlacher16}.
Then we briefly establish two auxiliary stability results based on \asS~that will be needed in our
subsequent analysis.
We include them here for the sake of completeness and their proofs can be found in \cite[Section B.2 and Theorem 7.7]{Ehrlacher16}.

\begin{proposition}[Phonon stability]
\label{pro:phonon}
Suppose that \asS~holds, then there exists a constant $\bar{c}_{\rm hom}$ satisfying $\bar{c}_{\hom} \geq \bar{c}_{\L} > 0$ such that 
\[
\big\< \delta^2 \E^{\rm h}({\bf 0})Dv, Dv\big\> \geq \bar{c}_{\rm hom} \|Dv\|^2_{\ell^2_{\mathcal{N}}} \qquad \forall~v\in\Us^{\rm c}(\Lhom).
\]
\end{proposition}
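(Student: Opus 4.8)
The plan is to exploit that the homogeneous lattice $\Lhom$ is the far-field limit of the defected lattice $\L$, together with the decay of $\bar{u}^{\rm core}$ at infinity: the homogeneous Hessian $\delta^2\E^{\rm h}(\mathbf{0})$ will be recovered from the strongly stable defect Hessian $\delta^2\E^{\L}(\bar{u}^{\rm core})$ by translating a compactly supported test function far away from the defect core. Fix an arbitrary $v\in\Us^{\rm c}(\Lhom)$ with support contained in a ball $B_{R_v}$, and for a lattice vector $a\in\Lhom$ set $v_a(\ell):=v(\ell-a)$. For $|a|$ large enough, $\mathrm{supp}(v_a)\subset\Lhom\setminus B_{R_{\rm def}}=\L\setminus B_{R_{\rm def}}$, so $v_a$ may be viewed as an element of $\Us^{\rm c}(\L)\subset\UsH(\L)$ (extended by its constant far-field value inside $B_{R_{\rm def}}$). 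Assumption \asS~then applies to $v_a$ and gives
\[
\big\< \delta^2\E^{\L}(\bar{u}^{\rm core}) v_a, v_a \big\> \geq \bar{c}_{\L}\,\|Dv_a\|^2_{\ell^2_{\mathcal{N}}}.
\]
Because the support of $v_a$ lies in the homogeneous region, where both $\L$ and the nearest-neighbour sets $\mathcal{N}(\cdot)$ are lattice-translation invariant, the stencils of $v_a$ are exact translates of those of $v$, whence $\|Dv_a\|_{\ell^2_{\mathcal{N}}}=\|Dv\|_{\ell^2_{\mathcal{N}}}$ for all such $a$.

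The crux is the limit
\[
\big\< \delta^2\E^{\L}(\bar{u}^{\rm core}) v_a, v_a \big\> \;\longrightarrow\; \big\< \delta^2\E^{\rm h}(\mathbf{0}) Dv, Dv \big\> \qquad\text{as } |a|\to\infty.
\]
Expanding the left-hand side as $\sum_{\ell\in\L}\sum_{\rho,\varsigma} V_{\ell,\rho\varsigma}\big(D\bar{u}^{\rm core}(\ell)\big)\,D_\rho v_a(\ell)\cdot D_\varsigma v_a(\ell)$, a difference $D_\rho v_a(\ell)$ vanishes unless $\ell$ or $\ell+\rho$ lies in $\mathrm{supp}(v_a)$; when $\ell$ is far from that support this forces $|\rho|,|\varsigma|$ to be large, so the coefficients are exponentially small by the locality bound \eqref{eq:Vloc} of \assERL. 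This yields an $a$-uniform summable majorant that controls the infinite-range tails and justifies passing to the limit term by term. For the finitely many sites that do contribute, $|\ell|\to\infty$ as $|a|\to\infty$; there $V_\ell=V^{\rm h}$ since $\ell\notin B_{R_{\rm def}}$, and by the far-field decay $D\bar{u}^{\rm core}(\ell)\to\mathbf{0}$ together with continuity of the derivatives we obtain $V_{\ell,\rho\varsigma}\big(D\bar{u}^{\rm core}(\ell)\big)\to V^{\rm h}_{,\rho\varsigma}(\mathbf{0})$. By translation invariance of these homogeneous coefficients the limiting sum is exactly $\big\<\delta^2\E^{\rm h}(\mathbf{0})Dv,Dv\big\>$.

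Combining the two displays and letting $|a|\to\infty$ gives $\big\<\delta^2\E^{\rm h}(\mathbf{0})Dv,Dv\big\>\geq\bar{c}_{\L}\|Dv\|^2_{\ell^2_{\mathcal{N}}}$ for every $v\in\Us^{\rm c}(\Lhom)$; the stated inequality therefore holds with $\bar{c}_{\rm hom}=\bar{c}_{\L}$, and taking $\bar{c}_{\rm hom}$ to be the largest admissible constant yields $\bar{c}_{\rm hom}\geq\bar{c}_{\L}>0$. The main obstacle is the convergence step: one must couple the exponential locality \eqref{eq:Vloc} with a quantitative far-field decay estimate for $D\bar{u}^{\rm core}$ (supplied by the single-defect equilibration theory reviewed above) into a dominated-convergence argument that controls the infinite-range interaction tails uniformly in $a$. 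Once this is in place, the translation invariance of the homogeneous model makes the remaining steps routine.
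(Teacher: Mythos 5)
Your translation-to-infinity argument is correct and is essentially the standard proof of this implication; the paper itself does not prove Proposition~\ref{pro:phonon} but defers to \cite[Section B.2]{Ehrlacher16}, where exactly this strategy is carried out (translate a compactly supported test function into the far field, invoke \asS, and pass to the limit using the locality bound \eqref{eq:Vloc} together with the decay \eqref{eq:ubar-decay} of $D\bar{u}^{\rm core}$). Your sketch correctly identifies the only delicate point --- the dominated-convergence step controlling the infinite-range tails uniformly in the translation --- so no essential idea is missing.
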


\begin{proposition}
\label{pro:periodic}
Suppose that \asS~holds and the energy-difference functional $\E$ is defined by \eqref{energy-difference-per}. Then, for $N$ sufficiently large, there exists a constant $\bar{c}>0$ such that 
\[
\big\< \delta^2\E(T_N^{\rm per} \bar{u}^{\rm core}) v , v\big\> \geq \bar{c} \|Dv\|^2_{\ell^2_{\mathcal{N}}(\L_N)} \qquad \forall~v\in\Us^{\per}_N.
\]
\end{proposition}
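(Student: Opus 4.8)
The plan is to argue by contradiction using a concentration--compactness principle, transferring the infinite-lattice stability \asS{} and the phonon stability of Proposition~\ref{pro:phonon} to the periodic domain. Write $H_N := \delta^2\E(T_N^{\rm per}\bar{u}^{\rm core})$ for the periodic defect Hessian, $H_N^{\rm h}$ for the periodic homogeneous Hessian $\delta^2\E^{\rm h}(\mathbf{0})$ evaluated on $\Us^{\per}_N$, and $\delta H_N := H_N - H_N^{\rm h}$ for the defect correction. Two structural facts drive the proof: first, $\delta H_N$ is asymptotically localized near the core, because outside $B_{\Rcore}$ the site potentials coincide with $V^{\rm h}$ and $T_N^{\rm per}\bar{u}^{\rm core}$ decays to the homogeneous reference, with the tail of $\delta H_N$ controlled by $|D\bar{u}^{\rm core}(\ell)|\lesssim|\ell|^{-d}$ together with the exponential interaction-range decay from \assERL; second, $H_N$ and $H_N^{\rm h}$ converge, on any fixed finite region, to their infinite-lattice counterparts $\delta^2\E^{\L}(\bar{u}^{\rm core})$ and $\delta^2\E^{\rm h}(\mathbf{0})$ as $N\to\infty$, since $T_N^{\rm per}\bar{u}^{\rm core}\to\bar{u}^{\rm core}$ pointwise.

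First I would establish the periodic analogue of phonon stability: $\langle H_N^{\rm h}v,v\rangle\ge\bar{c}_{\rm hom}\|Dv\|_{\ell^2_{\mathcal{N}}(\L_N)}^2$ for all $v\in\Us^{\per}_N$. This follows from Proposition~\ref{pro:phonon} by Fourier analysis on the torus: the dynamical matrix has the same symbol as in the whole-space setting, and the periodic problem samples this symbol only on the finite dual lattice of the computational cell, so the minimal Rayleigh quotient over these discrete wavevectors is bounded below by its infimum over the whole Brillouin zone, which is $\bar{c}_{\rm hom}$ (the neutral mode at zero wavevector being harmless within the seminorm $\|Dv\|_{\ell^2_{\mathcal{N}}}$).

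Next, suppose the claim fails. Then there are $N_k\to\infty$ and $v_k\in\Us^{\per}_{N_k}$ with $\|Dv_k\|_{\ell^2_{\mathcal{N}}(\L_{N_k})}=1$ and $\langle H_{N_k}v_k,v_k\rangle\to\mu\le 0$. After anchoring $v_k(0)=0$, a discrete Poincar\'e inequality on growing balls bounds $v_k$ on compact sets, so along a subsequence $v_k\to v_\infty$ pointwise with $Dv_k\rightharpoonup Dv_\infty$ weakly in $\ell^2_{\mathcal{N}}(\L)$ and $\|Dv_\infty\|_{\ell^2_{\mathcal{N}}(\L)}\le 1$. I would then run the following dichotomy. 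If $v_\infty=0$, then $Dv_k\to 0$ on every fixed ball, so the localization of $\delta H_{N_k}$ forces $\langle\delta H_{N_k}v_k,v_k\rangle\to 0$; combined with the periodic phonon estimate this gives $\mu=\lim\langle H_{N_k}v_k,v_k\rangle\ge\bar{c}_{\rm hom}>0$, contradicting $\mu\le 0$. If $v_\infty\neq 0$, then the local convergence of $\delta H_{N_k}$ and of $Dv_k$ on the core region (with the uniform $O(\rho^{-d})$ tail bound letting the core radius $\rho\to\infty$) yields $\langle\delta H_{N_k}v_k,v_k\rangle\to\langle[\delta^2\E^{\L}(\bar{u}^{\rm core})-\delta^2\E^{\rm h}(\mathbf{0})]v_\infty,v_\infty\rangle$, while weak lower semicontinuity of the nonnegative homogeneous form gives $\langle\delta^2\E^{\rm h}(\mathbf{0})v_\infty,v_\infty\rangle\le\liminf\langle H_{N_k}^{\rm h}v_k,v_k\rangle$. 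Adding these and using that the defect part has a genuine limit, $\langle\delta^2\E^{\L}(\bar{u}^{\rm core})v_\infty,v_\infty\rangle\le\liminf\langle H_{N_k}v_k,v_k\rangle=\mu\le 0$, which contradicts \asS, since $v_\infty\neq 0$ forces $\langle\delta^2\E^{\L}(\bar{u}^{\rm core})v_\infty,v_\infty\rangle\ge\bar{c}_{\L}\|Dv_\infty\|_{\ell^2_{\mathcal{N}}}^2>0$. Either way we reach a contradiction, which proves the existence of $\bar{c}>0$.

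The main obstacle is the passage to the limit of the defect quadratic form $\langle\delta H_{N_k}v_k,v_k\rangle$ together with the correct use of lower semicontinuity. Unlike a naive cutoff-and-glue argument, whose commutator error on the transition annulus need not be small relative to $\|Dv\|^2$, the contradiction scheme only requires that $\delta H_N$ be localized well enough that its quadratic form is, up to a tail that is $O(\rho^{-d})$ uniformly in $N$, determined by $Dv$ on a fixed ball $B_\rho$; establishing this uniform tail bound from the algebraic far-field decay of $\bar{u}^{\rm core}$ and the exponential locality \assERL, and verifying that weak convergence of $Dv_k$ upgrades to strong convergence on $B_\rho$ so that the defect form converges exactly, is the delicate point. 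A secondary technical issue is the periodic phonon estimate and ensuring that periodic images of the defect do not interfere, which is precisely where the hypothesis that $N$ be sufficiently large enters.
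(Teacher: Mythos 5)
Your argument is correct in outline, but note that the paper does not actually prove Proposition~\ref{pro:periodic}: it defers to \cite[Theorem~7.7]{Ehrlacher16}, and the closest in-paper analogue is the proof of Lemma~\ref{le:thm1:stab}. Both of those run the same contradiction/concentration--compactness scheme you propose --- normalised test functions $v_k$ on growing domains, separation into a far-field part controlled by phonon stability (Proposition~\ref{pro:phonon}) and a core part controlled by \asS, with the defect correction to the Hessian localised via the $|\ell|^{-d}$ decay of $D\bar{u}^{\rm core}$ and the exponential locality \assERL. The difference is in execution: the reference (and Lemma~\ref{le:thm1:stab}) decomposes $v_k$ explicitly with the truncation operators $\Pi_{r_k}$ for a diagonally chosen sequence of radii $r_k\to\infty$ and estimates the pieces and cross-terms quantitatively, whereas you extract a weak limit $v_\infty$ and invoke weak lower semicontinuity of the nonnegative homogeneous form together with strong local convergence of the defect form. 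Your route trades the bookkeeping of cutoff commutators for functional-analytic compactness arguments; the paper's route yields the slightly more quantitative statement $\bar{c}_R\to\bar{c}$ needed later (Lemma~\ref{le:pi:stab}), which a pure compactness argument does not directly provide. Two small points of care in your write-up: the dichotomy should be on whether $Dv_\infty=0$ rather than $v_\infty=0$ (your anchoring $v_k(0)=0$ repairs this, since $Dv_\infty=0$ then forces $v_\infty\equiv 0$), and the $v_k$ live on different spaces $\Us^{\per}_{N_k}$, so the weak limit must be taken after identifying them with displacements on $\L$ restricted to a fixed ball, which is legitimate only because $\L_{N_k}\cap B_\rho=\L\cap B_\rho$ for $N_k$ large.
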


To conclude this section, we give the decay estimates of the equilibrium state for single point defect \cite[Theorem 3.2]{chen19}: If $\bar{u}^{\rm core}\in\Admu(\L)$ is a strongly stable solution to \eqref{eq:variational-problem} satisfying \asS, then there exists $C > 0$ such that
\begin{eqnarray}\label{eq:ubar-decay}
\big|D\bar{u}^{\rm core}(\ell)\big|_{\mathcal{N}} \leq C \big(1+|\ell|\big)^{-d}.
\end{eqnarray}


\subsection{Proof of the existence results}
\label{sec:sub:thm1}

In this section, we give the detailed proof of Theorem~\ref{them:existence}. 
Recall the definitions introduced in Section~\ref{sec:equilibration} that $\D$ is a set of the positions of the point defects cores in $\L_N$ with the minimum separation distance $L_{\D}$, we assume $L_{\D} \ll N$. We define an approximated solution (predictor) $z:\L_N \rightarrow \R^d$ to the variational problem~\eqref{eq:variational-problem-per} as
\begin{eqnarray}\label{eq:z}
z(\ell) := \sum_{\ell_i \in \D} \Pi_R \bar{u}^{\rm core}(\ell - \ell_i) \qquad \forall \ell \in \L_N, 
\end{eqnarray}
where the truncation operator $\Pi_R$ is defined by \eqref{eq:trun_op_def} with radius $R=L_{\D}/3$.

The following Lemma provides an estimate on the residual of such an approximated solution in terms of $L_{\D}$.

\begin{lemma}\label{le:thm1:cons}
Suppose $z$ is the approximated solution to the variational problem~\eqref{eq:variational-problem-per} as defined in \eqref{eq:z} with truncation radius $R=L_{\D}/3$. Then, there exists a constant $L_0>0$ such that, for $L_{\D} > L_0$, 
\begin{eqnarray}
\big|\<\delta\E(z), v\>\big| \leq C \sqrt{n_{\D}} \cdot L_{\D}^{-d/2}\cdot \|D v\|_{\ell^2_{\mathcal{N}}(\L_N)},
\end{eqnarray}
where the constant $C$ is independent of $N, n_{\D}, L_{\D}$.
\end{lemma}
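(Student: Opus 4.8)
We want to estimate the residual $\langle \delta\E(z), v\rangle$ where $z$ is the superposition of truncated single-defect cores. The key observation is that $z$ is built from the equilibria $\bar{u}^{\rm core}$ of the infinite-lattice problem, which satisfy $\langle \delta\E^\L(\bar{u}^{\rm core}), v\rangle = 0$ for all test functions. So the residual should come entirely from (i) the truncation operator $\Pi_R$, and (ii) the cross-interaction between distinct defect cores, both of which are controlled by the far-field decay estimate \eqref{eq:ubar-decay}, namely $|D\bar{u}^{\rm core}(\ell)|_{\mathcal{N}} \leq C(1+|\ell|)^{-d}$.

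**Main steps.**

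The plan is to localise the residual site-by-site. First I would write $\langle \delta\E(z),v\rangle = \sum_{\ell\in\L_N}\langle V_{\ell,\rho}(Dz(\ell)), D_\rho v(\ell)\rangle$ and split $\L_N$ into the core regions $B_{R_{\rm def}}(\ell_i)$, the truncation annuli where $\Pi_R$ acts, and the ``far'' homogeneous region between cores. On the far region, the reference configuration is homogeneous, so $V_\ell = \Vhom$ and $\Vhom_{,\rho}(\pmb{0})=0$ by equilibrium of the perfect lattice; a first-order Taylor expansion of $V_{\ell,\rho}$ around $\pmb{0}$ then bounds the integrand by the local size of $Dz$, which by the decay estimate is of order $(1+\min_i|\ell-\ell_i|)^{-d}$ per contributing core. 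Next, for each single defect in isolation, $\bar{u}^{\rm core}$ is an exact equilibrium of $\E^\L$; the error introduced by replacing $\bar{u}^{\rm core}$ with $\Pi_R\bar{u}^{\rm core}$ is supported near the truncation radius $R=L_{\D}/3$ and is controlled, via \eqref{eq:ubar-decay}, by $|D\bar{u}^{\rm core}|$ at distance $R$, hence $O(R^{-d})$ in the relevant tail norm. I would then use Cauchy--Schwarz to factor out $\|Dv\|_{\ell^2_{\mathcal{N}}(\L_N)}$, leaving an $\ell^2$-type norm of the residual field to estimate.

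**Summing over defects and the key decay bound.**

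The crucial quantitative step is to bound the $\ell^2$-norm of the residual generated by each truncated core. For a single core, summing $(1+|\ell-\ell_i|)^{-2d}$ over the tail region $|\ell-\ell_i| \gtrsim R = L_{\D}/3$ gives, in dimension $d$, a contribution of order $\int_R^\infty r^{-2d} r^{d-1}\,dr \sim R^{-d} = (L_{\D}/3)^{-d}$, so the per-core residual norm is $O(L_{\D}^{-d/2})$. Since the defect cores are separated by at least $L_{\D}$ and the tails are summable, the cross-terms from pairs of distinct cores are of the same or smaller order, and the total residual is a sum of $n_{\D}$ contributions each of size $O(L_{\D}^{-d/2})$. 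Because these contributions are essentially orthogonal (localised near disjoint cores up to summable overlap), their $\ell^2$-combination scales like $\sqrt{n_{\D}}\cdot L_{\D}^{-d/2}$ rather than $n_{\D}\cdot L_{\D}^{-d/2}$, which is exactly the claimed bound. The condition $L_{\D}>L_0$ guarantees the tail expansions are in the regime where the regularity/locality assumption \assERL~applies and the cores do not overlap.

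**The main obstacle.**

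The hard part will be bookkeeping the interaction of the tails carefully enough to get the $\sqrt{n_{\D}}$ scaling rather than $n_{\D}$, and to ensure the constant $C$ is genuinely independent of $N$, $n_{\D}$, and $L_{\D}$. This requires exploiting the exponential locality from \assERL~to confine the site-potential stencils, combined with the algebraic decay \eqref{eq:ubar-decay} of each core, so that the overlapping tail contributions sum to a geometric-type series whose total is controlled uniformly. I would set this up by assigning each lattice site to its nearest defect core and estimating the ``leakage'' from the other cores as a summable perturbation; getting the orthogonality/near-disjoint-support argument to produce $\sqrt{n_{\D}}$ cleanly, uniformly in $N$ under periodic boundary conditions, is the technical crux.
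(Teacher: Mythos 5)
Your proposal is correct and follows essentially the same strategy as the paper's proof: split the residual into near-core and far-field contributions, use that $\bar{u}^{\rm core}$ is an exact equilibrium together with the decay estimate \eqref{eq:ubar-decay} to bound the per-core truncation error by $O(R^{-d/2})$ (the paper's Lemma~\ref{le:tr}), and exploit the near-disjoint supports of the core contributions via Cauchy--Schwarz to obtain the $\sqrt{n_{\D}}$ factor --- the only organisational difference being that the paper partitions the \emph{test function}, $v = v_0 + \sum_i \Pi^{\ell_i}_{r}v$, rather than the lattice, and then invokes the Lipschitz continuity of $\delta\E$ instead of pointwise Taylor expansions. Two details to repair in a full write-up: the individual derivatives $V^{\rm h}_{,\rho}({\bf 0})$ need \emph{not} vanish --- only the assembled homogeneous force $\F^{\rm h}_{\ell}({\bf 0})$ does --- so your far-field zeroth-order term disappears only after summation by parts over $\ell$, leaving exponentially small corrections near the cores controlled by~\assERL; and $\bar{u}^{\rm core}$ is an equilibrium of the infinite-lattice functional $\E^{\L}$ rather than of the periodic $\E$, which leaves an additional $O(\sqrt{n_{\D}}\,N^{-d/2})$ residual (the paper's term $T_3$) that is absorbed using $L_{\D}\ll N$.
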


\begin{proof}

Let $r:=R+1=L_{\D}/3+1$. For any $v \in \Us^{\per}_N$, we define
\begin{eqnarray}\label{eq:decomp_v_res}
v_i := \Pi^{\ell_i}_{r} v \quad \textrm{for}~i=1,\ldots,n_{\D}, \quad \textrm{and} \quad v_0:=v-\sum_{i=1}^{n_{\D}}v_i,
\end{eqnarray}
where the defect core truncation operator $\Pi^{\ell_i}_{r}$ is defined by \eqref{eq:trun_op_def}. For each $i=1,\ldots,n_{\D}$, $\Pi^{\ell_i}_r$ is extended periodically with respect to $\L_N$ since $v \in \Us_N^{\rm per}$. Lemma \ref{le:tr} implies that $\|D v_i\|_{\ell^2_{\mathcal{N}}(\L_{N})} \leq C\|D v\|_{\ell^2_{\mathcal{N}}(\L_{N})}$ for $i=1,\ldots,n_{\D}$.

We then decompose the residual into three parts
\begin{align}\label{eq:T12}
    \<\delta\E(z), v\> &= \sum_{i=0}^{n_{\D}}\<\delta\E(z), v_i\> \nonumber \\
    &= \<\delta \E(z), v_0\> + \sum^{n_{\D}}_{i=1} \<\delta\E(z)-\delta\E\big(T_N^{\rm per} \bar{u}^{\rm core}(\cdot - \ell_i)\big), v_i\> \nonumber \\
    &\hskip2.5cm + \sum^{n_{\D}}_{i=1} \<\delta\E\big(T_N^{\rm per} \bar{u}^{\rm core}(\cdot - \ell_i)\big), v_i\> \nonumber \\[1ex]
    &=: T_1 + T_2 + T_3,
\end{align}
where the operator $T_N^{\rm per}$ maps the displacements from $\Us^{1,2}(\L)$ to $\Us^{\rm per}_{N}(\L_N)$. For the term $T_1$, we obtain that
\begin{align}\label{eq:T1}
    \big|T_1 \big| &= \big|\<\delta\E(z), v_0\>\big| = \Big| \sum_{\ell\in\L_N} \sum_{\rho\in\Lhom\setminus0}V_{\ell, \rho}({\bf 0})\cdot D_{\rho}v_0(\ell) \Big| \nonumber \\[1ex]
    &\leq \Big|\sum_{\ell \in {{\rm supp}}(v_0)} \F^{\rm h}_{\ell}({\bf 0})\cdot v_0(\ell) \Big| + \Big| \sum_{\ell\in \L\cap(\bigcup_{i=1}^{n_{\D}} B_{3r/4}(\ell_i))} \sum_{\substack{\rho\in \Lhom\setminus0,\\ \ell+\rho \in {\rm supp}(v_0)}} V_{\ell, \rho}({\bf 0})\cdot v_0(\ell+\rho)\Big| \nonumber \\[1ex]
    &\leq Ce^{-\kappa r} \|Dv\|_{\ell^2_{\mathcal{N}}(\L_N)},
\end{align}
where $r=L_{\D}/3+1$ and the last inequality follows from the locality of site potentials~\assERL~with a constant $\kappa>0$.

To estimate $T_2$, by using Lemma \ref{le:tr}, it is straightforward to obtain that 
\begin{align}\label{eq:T2}
\big|T_2 \big| &\leq \sum_{i=1}^{n_{\D}} \big|\<\delta\E(z)-\delta\E\big(T_N^{\rm per}\bar{u}^{\rm core}(\cdot - \ell_i)\big), v_i\>\big| \nonumber \\[1ex]
&\leq CM_1\|D\Pi_R \bar{u}^{\rm core} - D \bar{u}^{\rm core}\|_{\ell^2_{\mathcal{N}}(\L_r)}\cdot \sum_{i=1}^{n_{\D}} \|D v_i\|_{\ell^2_{\mathcal{N}}(\L_r)} \nonumber \\[1ex]
& \leq CM_1 \sqrt{n_{\D}} \cdot R^{-d/2} \cdot \|D v\|_{\ell^2_{\mathcal{N}}(\L_N)},
\end{align}
where $M_1$ is the uniform Lipschitz constant of $\delta\E$ since $\E$ is $(\n-1)$-times continuously differentiable with respect to the $\|D\cdot\|_{\ell^2_{\mathcal{N}}}$ norm \cite{chen19, 2021-qmmm3}.

The term $T_3$ can be similarly estimated by using \cite[Lemma 7.6]{Ehrlacher16}
\begin{eqnarray}\label{eq:T3}
\big|T_3\big| \leq C\sqrt{n_{\D}} \cdot N^{-d/2} \cdot \|D v\|_{\ell^2_{\mathcal{N}}(\L_N)}.
\end{eqnarray}

Combing \eqref{eq:T12}, \eqref{eq:T1}, \eqref{eq:T2} and \eqref{eq:T3}, by exploiting the assumption that $L_{\D} \ll N$, for $L_{\D}$ sufficiently large, we have
\begin{eqnarray}
\big|\<\delta\E(z), v\>\big| \leq C \sqrt{n_{\D}} \cdot L_{\D}^{-d/2}\cdot \|D v\|_{\ell^2_{\mathcal{N}}(\L_N)},
\end{eqnarray}
which yields the stated result.
\end{proof}

We then proceed to prove that $\delta^2\E(z)$ is positive, where $z$ is given by \eqref{eq:z}. This result employs the ideas similar to those used in the proofs of \cite[Theorem 7.7]{Ehrlacher16} and \cite[Lemma 5.2]{2014-dislift}, modified here to an periodic setting and extended to cover the case of multiple point defects. 

\begin{lemma}\label{le:thm1:stab}
Suppose $z$ is the approximated solution to the variational problem~\eqref{eq:variational-problem-per} as defined in \eqref{eq:z}. Then, there exists a constant $L_0>0$ such that, for $L_{\D} > L_0$, there exists $\bar{c}_{L_{\D}} \geq \bar{c}/2$ so that
\begin{eqnarray}\label{eq:exist:stab}
\<\delta^2\E(z)v, v\> \geq \bar{c}_{L_{\D}} \|Dv\|^2_{\ell^2_{\mathcal{N}}(\L_{N})} \qquad \forall~v\in \Us^{\per}_N.  
\end{eqnarray}
\end{lemma}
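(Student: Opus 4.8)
The plan is to prove positivity of $\delta^2\E(z)$ by a localisation (partition-of-unity) argument that reduces the multi-defect Hessian to the two stability estimates already available: the single-core periodic stability of Proposition~\ref{pro:periodic} near each defect, and the homogeneous phonon stability of Proposition~\ref{pro:phonon} in the far field. Concretely, I would introduce a smooth partition of unity $\{\chi_i\}_{i=0}^{n_{\D}}$ on $\L_N^{\rm per}$ with $\sum_{i} \chi_i^2 = 1$, where for $i\geq 1$ the cutoff $\chi_i$ concentrates on a neighbourhood of the core $\ell_i$ of radius $\sim L_{\D}/2$ and $\chi_0$ is supported in the homogeneous region away from all cores; the overlap widths can be taken of order $L_{\D}$ because the cores are separated by at least $L_{\D}$. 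Using an IMS-type localisation identity for the exponentially-local quadratic form $\langle\delta^2\E(z)v,v\rangle$, I would write it as $\sum_i \langle\delta^2\E(z)(\chi_i v),\chi_i v\rangle$ plus a commutator remainder. Since the Hessian has exponentially decaying range by \assERL, this remainder is controlled by $\|\nabla\chi_i\|_\infty\,\|Dv\|_{\ell^2_{\mathcal{N}}(\L_N)}^2 \lesssim L_{\D}^{-1}\|Dv\|_{\ell^2_{\mathcal{N}}(\L_N)}^2$, which tends to $0$ as $L_{\D}\to\infty$.

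The second ingredient is to replace $\delta^2\E(z)$ by the appropriate reference operator on each localised piece. On the support of $\chi_i$ ($i\geq 1$) the predictor $z$ differs from $T_N^{\rm per}\bar{u}^{\rm core}(\cdot-\ell_i)$ only through (a) the truncation $\Pi_R\bar{u}^{\rm core}-\bar{u}^{\rm core}$, whose stencil norm is $O(R^{-d/2})=O(L_{\D}^{-d/2})$ by the decay estimate \eqref{eq:ubar-decay} together with Lemma~\ref{le:tr}, and (b) the tails $\sum_{j\neq i}\Pi_R\bar{u}^{\rm core}(\cdot-\ell_j)$ of the remaining cores, which are evaluated at distance $\geq L_{\D}$ from $\ell_j$ and are therefore $O(L_{\D}^{-d})$ again by \eqref{eq:ubar-decay}. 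Invoking the uniform Lipschitz continuity of $\delta^2\E$ with respect to $\|D\cdot\|_{\ell^2_{\mathcal{N}}}$ (available since $\E$ is $(\n-1)$-times differentiable under \assERL, as already used for $M_1$ in Lemma~\ref{le:thm1:cons}), this yields
\[
\langle\delta^2\E(z)(\chi_i v),\chi_i v\rangle \geq \langle\delta^2\E(T_N^{\rm per}\bar{u}^{\rm core}(\cdot-\ell_i))(\chi_i v),\chi_i v\rangle - C L_{\D}^{-d/2}\|D(\chi_i v)\|_{\ell^2_{\mathcal{N}}(\L_N)}^2,
\]
and Proposition~\ref{pro:periodic} bounds the first term below by $\bar{c}\|D(\chi_i v)\|_{\ell^2_{\mathcal{N}}(\L_N)}^2$. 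On the support of $\chi_0$ the potentials coincide with the homogeneous ones and $z$ is within $O(L_{\D}^{-d})$ of $\mathbf{0}$, so the same perturbation argument together with Proposition~\ref{pro:phonon} gives $\langle\delta^2\E(z)(\chi_0 v),\chi_0 v\rangle \geq (\bar{c}_{\rm hom}-CL_{\D}^{-d/2})\|D(\chi_0 v)\|_{\ell^2_{\mathcal{N}}(\L_N)}^2$.

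Assembling the pieces, I would sum the localised lower bounds, use that the partition satisfies $\sum_i \|D(\chi_i v)\|_{\ell^2_{\mathcal{N}}(\L_N)}^2 \geq \|Dv\|_{\ell^2_{\mathcal{N}}(\L_N)}^2 - CL_{\D}^{-1}\|Dv\|_{\ell^2_{\mathcal{N}}(\L_N)}^2$ (the same commutator-type estimate used above), and reabsorb the localisation remainder. This produces $\langle\delta^2\E(z)v,v\rangle \geq \big(\min(\bar{c},\bar{c}_{\rm hom}) - C(L_{\D}^{-1}+L_{\D}^{-d/2})\big)\|Dv\|_{\ell^2_{\mathcal{N}}(\L_N)}^2$, and since $\bar{c}_{\rm hom}\geq\bar{c}_{\L}$ the effective constant is bounded below by $\bar{c}/2$ once $L_{\D}\geq L_0$ is large enough, giving the claim with $\bar{c}_{L_{\D}}\geq\bar{c}/2$. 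I expect the main obstacle to be the discrete, nonlocal localisation step: unlike the exact IMS formula for local (differential) operators, the finite-difference stencil couples sites across the cutoff boundaries, so one must show carefully that the commutator error from splitting $D_\rho(\chi_i v)$ into $\chi_i D_\rho v$ plus boundary terms, summed against the exponentially-localised second derivatives, is genuinely of order $L_{\D}^{-1}$ uniformly in $N$, $n_{\D}$; keeping this error independent of the number of defects (so that it does not accumulate the factor $n_{\D}$) is the delicate point and relies on the supports of the $\chi_i$ being essentially disjoint for $L_{\D}$ large.
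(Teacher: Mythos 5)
Your overall architecture (localise near each core, apply Proposition~\ref{pro:periodic} there and Proposition~\ref{pro:phonon} in the far field, control the perturbation of the argument of $\delta^2\E$ by the Lipschitz continuity of the Hessian) is the right skeleton, and indeed matches the ingredients the paper uses. However, there is a genuine gap at exactly the step you flag as delicate, and it is not merely technical: the commutator in your IMS-type identity is \emph{not} of order $L_{\D}^{-1}\|Dv\|^2_{\ell^2_{\mathcal{N}}}$. When you expand $D_\rho(\chi_i v)(\ell)=\chi_i(\ell)D_\rho v(\ell)+D_\rho\chi_i(\ell)\,v(\ell+\rho)$, the remainder terms couple $D\chi_i$ to the \emph{undifferentiated} field $v$, not to $Dv$. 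Since the stability estimate is posed in the seminorm $\|Dv\|_{\ell^2_{\mathcal{N}}}$, the only way to control $v$ on the transition annulus is a Poincar\'e inequality there, which costs a factor of the annulus width $\sim L_{\D}$; this exactly cancels the factor $\|\nabla\chi_i\|_\infty\sim L_{\D}^{-1}$, leaving a commutator of size $O(1)\,\|Dv\|^2_{\ell^2_{\mathcal{N}}(A_i)}$ on each transition region $A_i$. For a test function $v$ concentrated on an annulus this is $O(1)\,\|Dv\|^2$ with a sign you cannot control, so it cannot be absorbed into $\bar c/2$ no matter how large $L_{\D}$ is. No fixed, deterministic partition of unity removes this obstruction; it is the standard failure mode of cut-off arguments in $\dot H^1$-type discrete spaces, and it is why the paper's truncation operator \eqref{eq:trun_op_def} subtracts a local average and why Lemma~\ref{le:tr} bounds the truncation error by the \emph{tail mass} $\|Dv\|_{\ell^2_{\mathcal{N}}(\L\setminus B_{R/2})}$ rather than by $R^{-1}\|Dv\|_{\ell^2_{\mathcal{N}}}$.

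The paper circumvents this by arguing by contradiction: it extracts a sequence of configurations with $L_k\to\infty$ and near-optimal test functions $v_k$ with $\|Dv_k\|_{\ell^2_{\mathcal{N}}}=1$, and then invokes \cite[Lemma 7.9]{Ehrlacher16} to choose radii $r_k\to\infty$ \emph{adapted to the sequence} $v_k$ (essentially a pigeonhole over nested annuli) so that the annular mass of $Dv_k$ in the transition regions vanishes as $k\to\infty$; only then do the cross terms $\<\delta^2\E(z_k)w_k^0,w_k^i\>$ tend to zero and the localised lower bounds assemble into a contradiction with $\bar c_k<\bar c/2$. To repair your proof you would either have to adopt this sequential/adaptive selection of cut-off radii (at which point the argument is no longer a direct uniform estimate), or supply a genuinely different mechanism for making the commutator small uniformly in $v$ --- which, in the $\|D\cdot\|_{\ell^2_{\mathcal{N}}}$ topology, is not available. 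The remaining steps of your proposal (the $O(L_{\D}^{-d/2})$ perturbation of the Hessian argument via \eqref{eq:ubar-decay} and Lemma~\ref{le:tr}, and the use of Propositions~\ref{pro:phonon} and~\ref{pro:periodic}, cf.\ also Lemma~\ref{le:pi:stab}) are sound and coincide with the paper's estimates of the terms $S_1$ and $S_2$.
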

\begin{proof}
We argue by contradiction. Suppose that there exists no $L_0$ satisfying \eqref{eq:exist:stab}, it follows that there exists a sequence of multiple point defects configurations $\D_k:=\{\ell_i^k\}^{n_{\D}}_{i=1}$ such that: (1) $L_k:=L_{\D_k} \rightarrow \infty$ as $k \rightarrow \infty$; (2) for all $k$, let $z_k$ be denoted as the approximated solution defined by \eqref{eq:z}, we have
\begin{equation*}
    \bar{c}_k := \inf_{\|Dv\|^2_{\ell^2_{\mathcal{N}}}=1}\<\delta^2\E(z_k)v, v\> < \bar{c}/2 .
\end{equation*}
Hence, let $v_k \in \Us_N^{\per}$ be a sequence of test functions such that $\|D v_k\|_{\ell^2_{\mathcal{N}}}=1$, we can obtain
\begin{eqnarray}\label{eq:Sbound}
\bar{c}_k \leq \<\delta^2\E(z_k) v_k, v_k\> \leq \bar{c}_k + k^{-1}.
\end{eqnarray}
We now employ the result in \cite[Lemma 7.9]{Ehrlacher16}. This states that there exists a sequence of radii, $r_k \rightarrow \infty$, for which we may also assume $r_k \leq L_{k}/3 $, so that for each $i=1,\ldots,n_{\D}$, 
\[
w^{i}_k := \Pi^{\ell^k_i}_{r_k} v_k, \quad \textrm{and defining}~~ w^0_k := v_k - \sum_{i=1}^{n_{\D}} w^i_k,
\]
where the construction follows from \eqref{eq:decomp_v_res}. It follows that
\begin{align}\label{eq:splitS}
\<\delta^2\E(z_k) v_k, v_k\> &= \sum_{i,j=0}^{n_{\D}} \<\delta^2\E(z_k) w_k^i, w_k^j\> \nonumber \\
&= \<\delta^2\E(z_k) w_k^0, w_k^0\> + \sum_{i=1}^{n_{\D}}  \<\delta^2\E(z_k) w_k^i, w_k^i\> + 2\sum_{i=1}^{n_{\D}} \<\delta^2\E(z_k) w_k^0, w_k^i\> \nonumber \\[1ex]
&=: S_1 + S_2 + S_3,
\end{align}
where we have ensured that supp$\{w^i_k\}$ for $i=1,\ldots,n_{\D}$ only overlaps with supp$\{w^0_k\}$ by choosing $r_k \leq L_k/3$, and hence all other cross-terms vanish.

For term $S_1$, we have 
\begin{align}
    \<\delta^2\E(z_k) w_k^0, w_k^0\> &= \big\<\big(\delta^2\E(z_k)-\delta^2\E({\bf 0})\big) w_k^0, w_k^0\big\> + \<\delta^2\E({\bf 0}) w_k^0, w_k^0\> \nonumber \\[1ex]
    &\geq \big(\bar{c}_{\rm hom} - M_2 \sqrt{n_{\D}} \cdot r^{-d/2}_k \big) \|D w_k^0\|^2_{\ell^2_{\mathcal{N}}(\L_N)},
\end{align}
where the last inequality follows from the  Proposition \ref{pro:phonon}.

The term $S_2$ can be estimated similarly from Lemma \ref{le:pi:stab}
\begin{align}
    \big|S_2\big| \geq \big(\bar{c}_{R} - M_2 \sqrt{n_{\D}} \cdot r^{-d/2}_k \big) \|D v_k\|^2_{\ell^2_{\mathcal{N}}(\L_N)}.
\end{align}

For the cross-terms $S_3$, by assuming $r_k \leq L_k/3$, we deduce that 
\[
\<\delta^2\E(z_k) w_k^0, w_k^i\> = \<\delta^2\E(z_k) (v_k-w_k^i), w_k^i\>.
\]
Using the techniques in the proof of \cite[Lemma 5.2]{2014-dislift}, we can obtain that, for each $i$,
\begin{eqnarray}\label{eq:S3}
\<\delta^2\E(z_k) w_k^0, w_k^i\> \rightarrow 0, \qquad \textrm{as}~ k\rightarrow\infty.
\end{eqnarray}

Combining the estimates from \eqref{eq:splitS} to \eqref{eq:S3}, we have
\[
\<\delta^2\E(z_k) v_k, v_k\> \geq (\bar{c}_R - \epsilon_k) \|D v_k\|^2_{\ell^2_{\mathcal{N}}(\L_N)} + \epsilon_k,
\]
where $\epsilon_k \rightarrow 0$ as $k \rightarrow \infty$. Furthermore, Lemma \ref{le:pi:stab} implies that for sufficiently large $R$ we have $\bar{c}_{R}\geq \bar{c}/2 > 0$. Hence, together with \eqref{eq:Sbound}, we can obtain 
\[
\bar{c}_k + k^{-1} \geq \<\delta^2\E(z_k) v_k, v_k\> \geq \bar{c}_R \geq \bar{c}/2,
\]
so for $k$ sufficiently large, it contradicts the assumption that $\bar{c}_k<\bar{c}/2$ for all $k$.
\end{proof}

We are ready to prove the Theorem~\ref{them:existence}.

\begin{proof}[Proof of Theorem \ref{them:existence}]
Applying Lemma \ref{le:ift} with the results in Lemma \ref{le:thm1:cons} and Lemma \ref{le:thm1:stab}, we can state that there exist $L_0>0$, where $\D$ satisfies $L_{\D}\geq L_0$, and $z$ is an approximated solution defined by \eqref{eq:z} corresponding to $\D$. It follows that for any $v \in \Us_N^{\per}$, there exists $\omega \in \Us_N^{\per}$ with $\|D\omega\|_{\ell^2_{\mathcal{N}}(\L_N)} \leq C\sqrt{n_{\D}} \cdot L_{\D}^{-d/2}$ such that
\begin{align*}
\<\delta\E(z+\omega), v\>=0, \quad \<\delta^2\E(z+\omega)v, v\> \geq \frac{\bar{c}}{4}\|Dv\|^2_{\ell^2_{\mathcal{N}}}.
\end{align*}
Writing $\bar{u}:=z+\omega$ yields the stated result.
\end{proof}

\subsection{Proof of the generalisation analysis}
\label{sec:sub:thm2}


In this section, we give the detailed proof of the generalisation analysis (Theorem \ref{them:geometry}), which is the main result in this paper. 

\begin{proof}
Applying the framework of the {\it a priori} error estimates in \cite{chen17, co2011, colz2012, colz2016}, we mainly divide the proof into several steps in order to apply the inverse function theorem (cf.~Lemma \ref{le:ift}).

{\it 1. Stability:} For any $v \in \Us^{\per}_N$, we consider the stability of
\begin{align}\label{eq:thm2:splitS}
\<\delta^2 \widetilde{\E}(\bar{u})v, v\> &=  \<\delta^2 \E(\bar{u})v, v \> + \big( \<\delta^2 \widetilde{\E}(\bar{u})v, v \> - \<\delta^2 \E(\bar{u})v, v \> \big) \nonumber \\[1ex]
&=: S_1 + S_2.
\end{align}

From the results in Theorem \ref{them:existence}, we can obtain that $\bar{u}$ is strongly stable, namely there exists $\bar{c}>0$ such that
\[
S_1:=\<\delta^2\E(\bar{u})v, v\> \geq \frac{\bar{c}}{4}\|Dv\|^2_{\ell^2_{\mathcal{N}}}.
\]

Similarly as shown in \eqref{eq:decomp_v_res}, we split the test function $v$. For $L \leq L_{\D}$, let $r:=L/3+1$, we define
\begin{eqnarray}\label{eq:decomp_v}
v_i := \Pi^{\ell_i}_{r} v \quad \textrm{for}~i=1,\ldots,n_{\D}, \quad \textrm{and} \quad v_0:=v-\sum_{i=1}^{n_{\D}}v_i,
\end{eqnarray}
Lemma \ref{le:tr} implies that $\|D v_i\|_{\ell^2_{\mathcal{N}}(\L_{N})} \leq C\|D v\|_{\ell^2_{\mathcal{N}}(\L_{N})}$ for $i=1,\ldots,n_{\D}$. 

The term $S_2$ can be further split into three parts
\begin{align}\label{eq:thm2:splitS2}
\big\<\big(\delta^2\widetilde{\E}(\bar{u})-\delta^2\E(\bar{u})\big) v, v\big\> =&~ \sum_{i,j=0}^{n_{\D}} \big\<\big(\delta^2\widetilde{\E}(\bar{u})-\delta^2\E(\bar{u})\big) v_i, v_j \big\> \nonumber \\[1ex]
=& ~\big\<\big(\delta^2\widetilde{\E}(\bar{u})-\delta^2\E(\bar{u})\big) v_0, v_0\big\> \nonumber \\
&+ \sum_{i=1}^{n_{\D}}  \big\<\big(\delta^2\widetilde{\E}(\bar{u})-\delta^2\E(\bar{u})\big) v_i, v_i \big\> \nonumber \\
&+ 2\sum_{i=1}^{n_{\D}} \big\<\big(\delta^2\widetilde{\E}(\bar{u})-\delta^2\E(\bar{u})\big) v_0, v_i \big\> \nonumber \\[1ex]
=:& ~S_{21} + S_{22} + S_{23},
\end{align}
where we have ensured that supp$\{v_i\}$ for $i=1,\ldots,n_{\D}$ only overlaps with supp$\{v_0\}$ by the choice of $r$ and therefore all other cross-terms vanish.

For the term $S_{21}$, 
for $L$ sufficiently large and $t\in[0,1]$, we can Taylor expand the $\delta^2\E$ at the reference configuration
\begin{align}
\big\<\big(\delta^2\widetilde{\E}(\omega)-\delta^2\E(\omega)\big)v_0, v_0\big\> &= \big\<\big(\delta^2 \widetilde{\E}({\bf 0})-\delta^2 \E({\bf 0})\big)v_0, v_0\big\> + \big\<\big(\delta^3 \widetilde{\E}(t\omega)-\delta^3 \E(t\omega)\big)\omega v_0, v_0\big\> \nonumber \\[1ex]
&=: S^{\rm (a)}_{21} + S^{\rm (b)}_{21}. 
\end{align}
We first estimate $S^{\rm (a)}_{21}$ by
\begin{align}\label{eq:s21a}
|S^{\rm (a)}_{21}| \leq  C \varepsilon^{\rm FC}_{\rm hom} \cdot \|D v\|^2_{\ell^2_{\mathcal{N}}(\L_N)}.
\end{align}
%
For the term $S^{\rm (b)}_{21}$, similarly we can obtain
\begin{eqnarray}
|S^{\rm (b)}_{21}| \leq C L^{-d/2} \cdot \|D v\|^2_{\ell^2_{\mathcal{N}}(\L_N)}.
\end{eqnarray}

To estimate $S_{22}$, recall the definition of the predictor \eqref{eq:z} and the construction of $v_i$, for each $i=1,\ldots,n_{\D}$ and $L$ sufficiently large, we have
\begin{align}
&\big\<\big(\delta^2\widetilde{\E}(\bar{u})-\delta^2\E(\bar{u})\big) v_i, v_i\big\> \nonumber \\[1ex]
=&~ \big\<\big(\delta^2\widetilde{\E}(\Pi_{R}\bar{u}^{\rm core}(\cdot-\ell_i) )-\delta^2\E(\Pi_{R}\bar{u}^{\rm core}(\cdot-\ell_i))\big) v_i, v_i\big\> \nonumber \\[1ex]
=&~ \big\<\big(\delta^2\widetilde{\E}(\Pi_{R}\bar{u}^{\rm core}(\cdot-\ell_i) )-\delta^2\widetilde{\E}(\bar{u}_L(\cdot-\ell_i))\big) v_i, v_i\big\> \nonumber \\
&+ \big\<\big(\delta^2\widetilde{\E}(\bar{u}_L(\cdot-\ell_i) )-\delta^2\E(\bar{u}_L(\cdot-\ell_i))\big) v_i, v_i\big\> \nonumber \\
&+ \big\<\big(\delta^2\E(\bar{u}_L(\cdot-\ell_i) )-\delta^2\E(\Pi_{R}\bar{u}^{\rm core}(\cdot-\ell_i))\big) v_i, v_i\big\> \nonumber \\[1ex]
\leq&~ C (\varepsilon^{\rm FC} + \|D \Pi_{R}\bar{u}^{\rm core} - D\bar{u}_L\|_{\ell^2_{\mathcal{N}}(\L_L)}) \cdot \|D v\|^2_{\ell^2_{\mathcal{N}}(\L_N)} \nonumber \\[1ex]
\leq&~ C (\varepsilon^{\rm FC}+L^{-d/2}) \cdot \|D v\|^2_{\ell^2_{\mathcal{N}}(\L_N)},
\end{align}
where the last inequality follows from \cite[Theorem 2.1]{2018-uniform}.

Noting that $\<\delta^2\E(u) v_0, v_i\> = \<\delta^2\E(u) (v-v_i), v_i\>$, the term $S_{23}$ can be estimated similarly by
\begin{align}\label{eq:S23}
    \big\<\big(\delta^2\widetilde{\E}(\bar{u})-\delta^2\E(\bar{u})\big) v_0, v_i\big\>
    \leq C (\varepsilon^{\rm FC}+L^{-d/2}) \cdot \|D v\|^2_{\ell^2_{\mathcal{N}}(\L_N)}.
\end{align}

Hence, combining the estimates from \eqref{eq:thm2:splitS} to \eqref{eq:S23}, for $L$ sufficiently large and the matching conditions $\varepsilon^{\rm FC}, \varepsilon^{\rm FC}_{\rm hom}$ sufficiently small, we have
\begin{eqnarray}\label{eq:thm2:stab}
\<\delta^2\widetilde{\E}(\bar{u})v, v\> \geq \frac{\bar{c}}{8}\|Dv\|^2_{\ell^2_{\mathcal{N}}(\L_N)}.
\end{eqnarray}

{\it 2. Consistency:}
We estimate the consistency error, for any $v \in \Us_{N}^{\rm per}$, by
\begin{align}\label{eq:splitcons}
    \<\delta \widetilde{\E}(\bar{u}), v\> &= \<\delta \widetilde{\E}(\bar{u})-\delta\E(\bar{u}), v\> \nonumber \\
    &= \sum^{n_{\D}}_{i=1} \<\delta \widetilde{\E}(\bar{u})-\delta\E(\bar{u}), v_i\> + \<\delta \widetilde{\E}(\bar{u})-\delta\E(\bar{u}), v_0\> \nonumber \\
    &=: T_1 + T_2,
\end{align}
where $v_i, i=0,\ldots,n_{\D}$, are constructed by \eqref{eq:decomp_v}.

To estimate $T_1$, for each $i=1, \ldots, n_{\D}$, we denote
\begin{align}\label{eq:cons1}
    T^{\rm (i)}_{1} := \<\delta \widetilde{\E}(\bar{u})-\delta\E(\bar{u}), v_i\> 
    = \big\<\delta\widetilde{\E}\big(\Pi_{R}\bar{u}^{\rm core}(\cdot-\ell_i)\big)-\delta\E\big(\Pi_{R}\bar{u}^{\rm core}(\cdot-\ell_i)\big), v_i\big\>. 
\end{align}
Applying~\cite[Theorem 2.1]{2018-uniform}, we have $\|D \Pi_{R}\bar{u}^{\rm core} - D\bar{u}_L\|_{\ell^2_{\mathcal{N}}(\L_L)} \leq C L^{-d/2}$. Hence, given $\delta>0$, for $L$ sufficiently large, $\Pi_{R}\bar{u}^{\rm core} \in B_{\delta}(\bar{u}_L)$. 
%
Recalling the definition of $\varepsilon^{\rm F}$, we can obtain 
\begin{eqnarray}\label{eq:T1a}
|T_1| \leq \sum^{n_{\D}}_{i=1} |T^{\rm (i)}_1| \leq  C \sqrt{n_{\D}} \cdot \varepsilon^{\rm F} \cdot \|D v\|_{\ell^2_{\mathcal{N}}(\L_N)}.
\end{eqnarray}

To estimate $T_2$, we note that $\bar{u}$ is smooth in supp$\{v_0\}$ for $L$ sufficiently large according to Theorem \ref{them:existence}. Hence, we can Taylor expand $\delta \widetilde{\E}$ and $\delta \E$ at the reference configuration
\begin{align}\label{eq:cons2}
    \big\< \delta \widetilde{\E}(\omega) -  \delta \E(\omega), v_0 \big\>
    &= \big\<\big(\delta^2 \widetilde{\E}({\bf 0}) - \delta^2 \E({\bf 0}) \big)\omega, v_0 \big\> + \big\<\big(\delta^3 \widetilde{\E}(t\omega) - \delta^3 \E(t\omega) \big)(\omega)^2, v_0 \big\> \nonumber \\[1ex]
    &=: T_{21} + T_{22},
\end{align}
where $t\in[0,1]$. The term $T_{21}$ can be bounded by
\begin{eqnarray}
|T_{21}| \leq C \sqrt{n_{\D}} \cdot L^{-d/2} \varepsilon^{\rm FC}_{\rm hom} \cdot \|D v\|_{\ell^2_{\mathcal{N}}(\L_N)}.
\end{eqnarray}
For $T_{22}$, it can be estimated similarly
\begin{eqnarray}\label{eq:T22}
|T_{22}| \leq C \sqrt{n_{\D}} \cdot L^{-3d/2} \cdot \|D v\|_{\ell^2_{\mathcal{N}}(\L_N)}.
\end{eqnarray}

Hence, combining from \eqref{eq:splitcons} to \eqref{eq:T22}, we can obtain 
\begin{eqnarray}\label{eq:thm2:cons}
\<\delta \widetilde{\E}(\bar{u}), v\> \leq C \sqrt{n_{\D}} \cdot \big( \varepsilon^{\rm F} + L^{-d/2} \varepsilon^{\rm FC}_{\rm hom} + L^{-3d/2}\big) \cdot \|D v\|_{\ell^2_{\mathcal{N}}(\L_N)}.
\end{eqnarray}

{\it 3. Application of inverse function theorem:} With the stability \eqref{eq:thm2:stab} and consistency \eqref{eq:thm2:cons}, we can apply the inverse function theorem (Lemma \ref{le:ift}) to obtain, for $L$ sufficiently large and the matching conditions defined in \eqref{eq:ED} and \eqref{eq:FCDhom} sufficiently small, the existence of a solution $\tilde{u}$ to \eqref{eq:variational-problem-approx}, and the estimate
\[
\|D\bar{u} - D\tilde{u}\|_{\ell^2_{\mathcal{N}}(\L_N)} \leq C^{\rm G} \sqrt{n_{\D}} \cdot \big( \varepsilon^{\rm F} + L^{-d/2} \varepsilon^{\rm FC}_{\rm hom} + L^{-3d/2} \big),
\]
where $C^{\rm G}$ is independent of $N, n_{\D}, L$. This completes the proof of \eqref{eq:geoerr}.

{\it 4: Error in energy:} Next, we estimate the error in the energy. Recall the definition of the predictor $z$ by \eqref{eq:z}, we first spilt the error in energy into two parts
\begin{eqnarray}\label{eq:splitE}
\big| \E(\bar{u}) - \widetilde{\E}(\tilde{u}) \big| \leq \big| \E(\bar{u}) - \E(z)\big| + \big| \E(z) - \widetilde{\E}(\tilde{u}) \big| =: E_1 + E_2
\end{eqnarray}
The term $E_1$ can be bounded by
\begin{align}\label{eq:E1}
\big|\E(\bar{u}) - \E(z)\big| &= \Big| \int_0^1 \big\<\delta\E\big((1-s)\bar{u}+sz\big), \bar{u}-z \big\> \ds \Big| \nonumber \\[1ex]
&= \Big| \int_0^1 \big\<\delta\E\big((1-s)\bar{u}+sz\big)-\delta\E(\bar{u}), \bar{u}-z \big\> \ds \Big| \nonumber \\[1ex]
&\leq C M_1 \cdot \|D \bar{u} - Dz \|^2_{\ell^2_{\mathcal{N}}(\L_N)} \leq C n_{\D} \cdot L^{-d},
\end{align}
where $M_1$ is the uniform Lipschitz constant of $\delta\E$.

To estimate $E_2$, by applying the technique used in \eqref{eq:E1}, similarly we can obtain  
\begin{align}\label{eq:E2}
\big| \widetilde{\E}(\tilde{u}) - \E(z)\big| &\leq \big| \widetilde{\E}(\tilde{u}) - \widetilde{\E}(z)\big| + \big| \widetilde{\E}(z) - \E(z)\big| \nonumber \\[1ex]
&\leq C \widetilde{M}_1 \cdot \|D\tilde{u} - Dz\|^2_{\ell^2_{\mathcal{N}}(\L_N)} + \big| \widetilde{\E}(z) - \E(z)\big| \nonumber \\[1ex]
&\leq C n_{\D} \cdot \big( \|D\tilde{u} - D\bar{u}\|^2_{\ell^2_{\mathcal{N}}(\L_N)} + \|D\bar{u} - Dz\|^2_{\ell^2_{\mathcal{N}}(\L_N)} + \varepsilon^{\rm E} \big) \nonumber \\[1ex]
&\leq C n_{D} \cdot \big(\|D\tilde{u} - D\bar{u}\|^2_{\ell^2_{\mathcal{N}}(\L_N)} + L^{-d} + \varepsilon^{\rm E} \big),
\end{align}
where $\widetilde{M}_1$ is the uniform Lipschitz constant of $\delta\widetilde{\E}$.

Combining \eqref{eq:splitE}, \eqref{eq:E1} and \eqref{eq:E2}, we obtain
\[
\big| \E(\bar{u}) - \widetilde{\E}(\tilde{u}) \big| \leq C^{\rm E} n_{\D} \cdot \Big( \big( \varepsilon^{\rm F} + L^{-d/2} \varepsilon^{\rm FC}_{\rm hom} + L^{-3d/2} \big)^2 + L^{-d} + \varepsilon^{\rm E} \Big),
\]
which completes the proof of Theorem \ref{them:geometry}.
\end{proof}

\appendix

\section{Auxiliary results}
\label{sec:anx}
We first review a quantitative version of the inverse function theorem, adapted from \cite[Lemma B.1]{mlco2013}.

\begin{lemma}\label{le:ift}
Let $X, Y$ be Hilbert spaces, $w \in X$, $F\in C^2(B^{X}_{R}(w);Y)$ with Lipschitz continuous Hessian, $\|\delta^2F(x)-\delta^2F(y)\|_{L(X,Y)} \leq M\|x-y\|_{X}$ for any $x, y \in B^X_{R}(w)$. Furthermore, suppose that there exist constants $\mu, r>0$ such that
\[
\<\delta^2 F(w)v, v\> \geq \mu\|v\|^2_X, \quad \|\delta F(w)\|_Y \leq r, \quad \textrm{and}~\frac{2Mr}{\mu^2}<1,
\]
then there exists a locally unique $\bar{w} \in B^{X}_R(w)$ such that $\delta F(\bar{w})=0, \|w - \bar{w}\|_{X} \leq \frac{2r}{\mu}$ and 
\[
\<\delta^2 F(\bar{w})v, v\> \geq \big(1 - \frac{2Mr}{\mu^2}\big)\mu \|v\|^2_X.
\]
\end{lemma}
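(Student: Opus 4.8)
The plan is to recast the equation $\delta F(\bar w)=0$ as a fixed point problem and apply the Banach fixed point theorem. Set $A:=\delta^2 F(w)$. The coercivity hypothesis $\langle\delta^2F(w)v,v\rangle\geq\mu\|v\|_X^2$, together with boundedness of the (symmetric) Hessian, makes $A$ an isomorphism by Lax--Milgram, with $\|A^{-1}\|\leq 1/\mu$. I would then define the chord (simplified Newton) map $\Phi(u):=u-A^{-1}\delta F(u)$, whose fixed points coincide exactly with the zeros of $\delta F$ because $A^{-1}$ is injective, and carry out the contraction argument on the closed ball $\overline{B^X_{2r/\mu}(w)}$, which I take to lie inside the domain $B^X_R(w)$ where the Lipschitz estimate for $\delta^2F$ is available (so $2r/\mu\leq R$ is the standing smallness requirement).

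The crux is to verify that $\Phi$ is simultaneously a self-map and a contraction on this ball under the single hypothesis $2Mr/\mu^2<1$. For any $u,u'$ in the ball the fundamental theorem of calculus gives $\Phi(u)-\Phi(u')=A^{-1}\int_0^1\big[A-\delta^2F(u'+t(u-u'))\big](u-u')\,dt$; bounding the bracket by $M\cdot(2r/\mu)$ via the Lipschitz property yields the contraction constant $q=2Mr/\mu^2<1$. For the self-mapping property the key refinement is to compare $\Phi(u)$ to the base point $w$ rather than to an arbitrary point: the same identity, now anchored at $w$, produces $\|\Phi(u)-\Phi(w)\|\leq\frac{M}{2\mu}\|u-w\|_X^2$, the factor $\frac12$ coming from $\int_0^1 t\,dt$. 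Combining this with $\|\Phi(w)-w\|=\|A^{-1}\delta F(w)\|\leq r/\mu$ gives $\|\Phi(u)-w\|\leq\frac{r}{\mu}\big(1+\frac{2Mr}{\mu^2}\big)\leq\frac{2r}{\mu}$, where the final inequality uses exactly $2Mr/\mu^2\leq1$. This quadratic (rather than linear) self-map bound is precisely what lets the sharp threshold $2Mr/\mu^2<1$ suffice, instead of a stronger smallness such as $2Mr/\mu^2\leq\frac12$.

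With both properties established, the Banach fixed point theorem delivers a unique $\bar w\in\overline{B^X_{2r/\mu}(w)}$ with $\Phi(\bar w)=\bar w$, hence $\delta F(\bar w)=0$, local uniqueness, and the bound $\|w-\bar w\|_X\leq2r/\mu$ directly from membership in the ball. The concluding coercivity estimate at $\bar w$ follows by perturbing off $w$: $\langle\delta^2F(\bar w)v,v\rangle\geq\langle\delta^2F(w)v,v\rangle-\|\delta^2F(\bar w)-\delta^2F(w)\|\,\|v\|_X^2\geq\big(\mu-M\|\bar w-w\|_X\big)\|v\|_X^2\geq\big(1-\tfrac{2Mr}{\mu^2}\big)\mu\|v\|_X^2$, again using the Lipschitz bound and $\|\bar w-w\|_X\leq2r/\mu$. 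I expect the only genuine obstacle to be the bookkeeping in the self-mapping step: one must be disciplined about anchoring the estimate at $w$ to extract the quadratic bound, since the naive linear estimate would lose the factor of two and fail to reach the claimed threshold.
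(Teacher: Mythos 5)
Your argument is correct. Note, however, that the paper itself gives no proof of this lemma: it is stated as a review of a quantitative inverse function theorem and simply cites Lemma~B.1 of the reference \texttt{mlco2013}, so there is no in-paper argument to compare against. What you have written is essentially the standard proof underlying that cited result: invert $A=\delta^2F(w)$ via Lax--Milgram with $\|A^{-1}\|\le 1/\mu$, run the chord map $\Phi(u)=u-A^{-1}\delta F(u)$ on the closed ball of radius $2r/\mu$, and use the Lipschitz Hessian to get the contraction constant $2Mr/\mu^2$. Your key observation --- anchoring the self-map estimate at $w$ to extract the quadratic bound $\|\Phi(u)-\Phi(w)\|\le \tfrac{M}{2\mu}\|u-w\|_X^2$ via $\int_0^1 t\,dt=\tfrac12$, so that the single threshold $2Mr/\mu^2<1$ suffices --- is exactly the point that makes the stated constants come out, and your final coercivity perturbation $\mu-M\cdot 2r/\mu=(1-2Mr/\mu^2)\mu$ matches the claimed bound. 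The one hypothesis you correctly flag as implicit is $2r/\mu\le R$, which is needed for the ball to sit inside the domain of validity of the Lipschitz estimate; the lemma as stated leaves this tacit (as does the original reference), so making it a standing assumption is the right call. Your proof is a valid, self-contained replacement for the citation.
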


Following \cite{Ehrlacher16} we define a family of truncation operators $\{\Pi_{R}^{\ell_i}\}_{\ell_i \in \D}$, which we will apply to the single strongly stable core $\bar{u}^{\rm core}$. Let $\eta \in C^1(\R^d;[0,1])$ be a cut-off function satisfying $\eta(x)=1$ for $|x|\leq 4/6$ and $\eta(x)=0$ for $|x|\geq 5/6$.

We denote $\T_{\L}$ as the {\it canonical triangulation} of $\R^d$ whose nodes are the reference sites $\L$ (cf.~\cite[Section 2.1]{Ehrlacher16}). Let $Iu$ be the piecewise affine interpolant of $u$ with respect to $\T_{\L}$, and $A_{R}:=B_{5R/6}\setminus B_{4R/6}$ be an annulus, then we can define the truncation operator by
\begin{eqnarray}\label{eq:trun_op_def}
\Pi^{\ell_i}_{R}u(\ell):= \eta\Big(\frac{\ell - \ell_i}{R}\Big)\big(u(\ell)-a^{\ell_i}_{R} \big), \quad \textrm{where} \quad a^{\ell_i}_{R}:=\bbint_{\ell_i + A_R} Iu(x)\dx.
\end{eqnarray}
In addition, we define $\Pi_{R}:=\Pi^{0}_{R}$.

In this paper, we consider multiple point defects in a finite domain $\L_N$ with periodic boundary condition. Hence, for $R \ll N$, we extend $\Pi_R$ periodically with respect to $\L_N$. 
%
We state the following result concerning the approximation property of the truncation operator $\Pi_R$, which follows from results in \cite{Ehrlacher16}.

\begin{lemma}\label{le:tr}
Let $v\in\UsH(\L)$, there exists $C>0$ such that, for $R$ sufficiently large,
\begin{align}\label{eq:tr}
    \|D \Pi_{R} v - Dv\|_{\ell^2_{\mathcal{N}}} &\leq C\|Dv\|_{\ell^2_{\mathcal{N}}(\L \setminus B_{R/2})}.
\end{align}
In particular, if $\bar{u}^{\rm core}$ is the core corrector satisfying~\asS, 
we have
\begin{align}\label{eq:trR}
    \|D \Pi_R \bar{u}^{\rm core} - D\bar{u}^{\rm core}\|_{\ell^2_{\mathcal{N}}} \leq CR^{-d/2},
\end{align}
where $C$ is independent of $R$ and $\D$.
\end{lemma}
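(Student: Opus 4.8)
The plan is to reduce \eqref{eq:tr} to a discrete Leibniz rule combined with a scaled Poincar\'e inequality on the transition annulus, and then to obtain \eqref{eq:trR} by inserting the far-field decay estimate \eqref{eq:ubar-decay}. Writing $\eta_R(\ell) := \eta(\ell/R)$, the first step is the identity
\begin{align*}
D_\rho(\Pi_R v)(\ell) - D_\rho v(\ell)
&= \big(\eta_R(\ell+\rho) - 1\big) D_\rho v(\ell)
+ \big(D_\rho \eta_R(\ell)\big)\,\big(v(\ell) - a_R\big),
\end{align*}
valid for every $\ell \in \L$ and $\rho \in \mathcal{N}(\ell) - \ell$. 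Since $\eta_R \equiv 1$ on $B_{2R/3}$ and $\eta_R \equiv 0$ outside $B_{5R/6}$, both terms vanish whenever $\ell$ and $\ell+\rho$ lie in $B_{2R/3}$; because the nearest-neighbour offsets $|\rho|$ are uniformly bounded, for $R$ sufficiently large any nonvanishing contribution forces $|\ell| > R/2$, so the left-hand side of \eqref{eq:tr} is effectively a sum over $\L \setminus B_{R/2}$.

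Next I would bound the two terms separately. The first satisfies $\big|(\eta_R(\ell+\rho)-1)D_\rho v(\ell)\big| \leq |D_\rho v(\ell)|$ and, by the support observation, contributes at most $C\|Dv\|_{\ell^2_{\mathcal{N}}(\L \setminus B_{R/2})}$. For the second term I use the $C^1$-regularity of $\eta$ to get $|D_\rho \eta_R(\ell)| \leq \|\nabla \eta\|_\infty |\rho|/R \leq C/R$, with support contained in the annulus $A_R$ up to a bounded neighbourhood; summing in $\ell$ yields a bound of the form $(C/R)\,\|v - a_R\|_{\ell^2(A_R \cap \L)}$. Here the key estimate is a discrete Poincar\'e inequality on the annulus: since $a_R$ is the average of the piecewise affine interpolant $Iv$ over $A_R$, one passes between the discrete norm of $v-a_R$ and the $L^2(A_R)$ norm of $Iv - a_R$ (using the norm equivalence of $Iv$ with its nodal values) and applies the continuous Poincar\'e inequality on $A_R$, whose diameter is comparable to $R$, to obtain $\|v - a_R\|_{\ell^2(A_R\cap\L)} \leq C R\,\|Dv\|_{\ell^2_{\mathcal{N}}(A_R')}$ for a slightly enlarged region $A_R' \subset \L \setminus B_{R/2}$. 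The factor $R$ cancels the $1/R$ coming from $D_\rho\eta_R$, and adding the two contributions proves \eqref{eq:tr}.

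Finally, for \eqref{eq:trR} I would apply \eqref{eq:tr} with $v = \bar{u}^{\rm core}$ and estimate the right-hand side using \eqref{eq:ubar-decay}, $|D\bar{u}^{\rm core}(\ell)|_{\mathcal{N}} \leq C(1+|\ell|)^{-d}$:
\begin{align*}
\|D\bar{u}^{\rm core}\|^2_{\ell^2_{\mathcal{N}}(\L \setminus B_{R/2})}
&\leq C \sum_{|\ell| > R/2} (1+|\ell|)^{-2d} \\
&\leq C \int_{R/2}^\infty r^{-2d}\, r^{d-1}\,{\rm d}r
\leq C R^{-d},
\end{align*}
where the lattice sum is compared with the corresponding radial integral, each spherical shell of radius $r$ carrying $O(r^{d-1})$ sites. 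Taking square roots gives precisely $CR^{-d/2}$, with a constant independent of $R$ and of the defect configuration $\D$.

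The main obstacle, and the only genuinely delicate point, is the scaled discrete Poincar\'e inequality on the annulus together with the reconciliation of the continuous average $a_R$ (defined through the interpolant $Iv$) with the purely discrete quantities; this is where the geometry of $A_R$ and the norm-equivalence between $v$ and $Iv$ enter, and it is the step for which I would rely most heavily on the constructions of~\cite{Ehrlacher16}. Everything else is bookkeeping with uniformly bounded neighbour stencils, and the decay insertion for \eqref{eq:trR} is routine.
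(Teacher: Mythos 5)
Your proposal is correct and follows essentially the same route as the paper: the paper simply cites \cite[Lemma 4.5]{2014-dislift} for \eqref{eq:tr} and obtains \eqref{eq:trR} by inserting the decay estimate \eqref{eq:ubar-decay} exactly as you do, and your cutoff--Leibniz identity plus scaled Poincar\'e inequality on the annulus is precisely the standard argument behind that cited lemma. In effect you have supplied the details the paper delegates to the reference; nothing in your argument is at odds with it.
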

\begin{proof}
The estimate \eqref{eq:tr} is simply a restatement of \cite[Lemma 4.5]{2014-dislift}, while the second estimate \eqref{eq:trR} then follows immediately from \eqref{eq:ubar-decay}.
\end{proof}

Next, we show that the assumption \asS~implies that $\delta^2\E(\Pi_R \bar{u}^{\rm core})$ is also positive for sufficiently large $R$, which follows directly from Proposition \ref{pro:periodic}. 

\begin{lemma}\label{le:pi:stab}
Suppose $\bar{u}^{\rm core}$ satisfies \asS, then for sufficiently large $R$, there exist a constant $\bar{c}_R>0$ such that
\begin{eqnarray}
\<\delta^2\E(\Pi_R \bar{u}^{\rm core})v, v\> \geq \bar{c}_R \|D v\|^2_{\ell^2_{\mathcal{N}}(\L_N)},
\end{eqnarray}
and $\bar{c}_{R} \rightarrow \bar{c}$ given in Proposition \ref{pro:periodic}, as $R \rightarrow \infty$.
\end{lemma}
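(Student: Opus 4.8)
The plan is to treat $\delta^2\E(\Pi_R \bar{u}^{\rm core})$ as a perturbation of $\delta^2\E(T_N^{\rm per}\bar{u}^{\rm core})$, whose positivity is already guaranteed by Proposition~\ref{pro:periodic}, and to control the perturbation through the Lipschitz continuity of the second variation together with the truncation estimate of Lemma~\ref{le:tr}.

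First I would record that, by the regularity and locality assumption~\assERL\ (with $\n\geq 3$), the map $u \mapsto \delta^2\E(u)$ is Lipschitz continuous from $\Us^{\per}_N$ into $\mathcal{L}(\Us^{\per}_N,(\Us^{\per}_N)^*)$ with a Lipschitz constant $M_2$ that is independent of $N$ and $R$; this is the same constant appearing in the proof of Lemma~\ref{le:thm1:stab}. Then, for any $v\in\Us^{\per}_N$, I would split
\begin{align*}
\<\delta^2\E(\Pi_R \bar{u}^{\rm core})v, v\>
&= \<\delta^2\E(T_N^{\rm per}\bar{u}^{\rm core})v, v\> \\
&\quad + \big\<\big(\delta^2\E(\Pi_R \bar{u}^{\rm core}) - \delta^2\E(T_N^{\rm per}\bar{u}^{\rm core})\big)v, v\big\>.
\end{align*}
Proposition~\ref{pro:periodic} bounds the first term below by $\bar{c}\,\|Dv\|^2_{\ell^2_{\mathcal{N}}(\L_N)}$, while the Lipschitz bound controls the second term in modulus by $M_2\,\theta_R\,\|Dv\|^2_{\ell^2_{\mathcal{N}}(\L_N)}$, where $\theta_R := \|D\Pi_R \bar{u}^{\rm core} - D T_N^{\rm per}\bar{u}^{\rm core}\|_{\ell^2_{\mathcal{N}}(\L_N)}$.

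It then remains to show $\theta_R \to 0$ as $R\to\infty$. I would estimate this by inserting $\bar{u}^{\rm core}$ and using the triangle inequality: the truncation error $\|D\Pi_R \bar{u}^{\rm core} - D\bar{u}^{\rm core}\|_{\ell^2_{\mathcal{N}}}$ is bounded by $C R^{-d/2}$ via Lemma~\ref{le:tr}, estimate~\eqref{eq:trR}, and the remaining term, measuring the discrepancy between the infinite-lattice core and its periodic representative $T_N^{\rm per}\bar{u}^{\rm core}$, is controlled by the far-field decay~\eqref{eq:ubar-decay} together with the properties of the extension operator $T_N^{\rm per}$ from \cite[Section 7.3]{Ehrlacher16}. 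Setting $\bar{c}_R := \bar{c} - M_2\,\theta_R$ then yields a constant that is strictly positive for $R$ sufficiently large and satisfies $\bar{c}_R \to \bar{c}$ as $R\to\infty$, which is exactly the claim.

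The main obstacle I anticipate is the last step: both $\Pi_R \bar{u}^{\rm core}$ and $T_N^{\rm per}\bar{u}^{\rm core}$ are distinct finite/periodic approximations of the same infinite-lattice core, so making $\theta_R$ small requires careful bookkeeping of the periodic extension near $\partial\Omega_N$ and of the regime $R\ll N$ in which the lemma is ultimately applied. Away from this point, the argument is a routine Lipschitz perturbation of the already-established coercivity estimate of Proposition~\ref{pro:periodic}.
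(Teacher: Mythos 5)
Your argument is essentially identical to the paper's proof: the same splitting of $\<\delta^2\E(\Pi_R\bar{u}^{\rm core})v,v\>$ into the coercive term handled by Proposition~\ref{pro:periodic} plus a perturbation controlled by the Lipschitz constant $M_2$ of $\delta^2\E$ and the truncation estimate \eqref{eq:trR}, yielding $\bar{c}_R = \bar{c} - M_2\,O(R^{-d/2})$. If anything you are slightly more careful than the paper, which writes the perturbation directly as $M_2\|D\Pi_R\bar{u}^{\rm core}-D\bar{u}^{\rm core}\|_{\ell^2_{\mathcal{N}}(\L_N)}$ and thereby glosses over the distinction between $\bar{u}^{\rm core}$ and its periodic representative $T_N^{\rm per}\bar{u}^{\rm core}$ that you flag as the one point needing bookkeeping.
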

\begin{proof}
For any $v \in \Us^{\per}_{N}$, 
\begin{align}
    \<\delta^2\E(\Pi_R \bar{u}^{\rm core})v, v\> &= \big\< \big(\delta^2\E(\Pi_R \bar{u}^{\rm core}) - \delta^2\E(T^{\rm per}_{N} \bar{u}^{\rm core}) \big)v, v \big\> + \<\delta^2\E(T_N^{\rm per} \bar{u}^{\rm core})v, v \big\> \nonumber \\[1ex]
    &\geq \big( \bar{c} - M_2\|D\Pi_R \bar{u}^{\rm core} - D\bar{u}^{\rm core}\|_{\ell^2_{\mathcal{N}}(\L_N)} \big) \|D v\|^2_{\ell^2_{\mathcal{N}}(\L_N)} \nonumber \\[1ex]
    &\geq (\bar{c} - M_2R^{-d/2}) \|D v\|^2_{\ell^2_{\mathcal{N}}(\L_N)},
\end{align}
where the mapping $T_N^{\rm per}: \Us^{1,2}(\L) \rightarrow \Us^{\rm per}_{N}(\L_N)$ is introduced in Section~\ref{sec:sub:pre} and $M_2$ is the uniform Lipschitz constant of $\delta^2\E$ since $\E$ is $(\n-1)$-times continuously differentiable. As $R\rightarrow \infty$, we can obtain the stated result.
\end{proof}

\section{Numerical supplements}
\label{sec:numer_supp}

We give the illustration of the simulation domain for $n_{\D}=3,4$ considered in this paper in Figure~\ref{fig:multivac_app}.

\begin{figure}[!htb]
    \centering
		\includegraphics[scale=0.5]{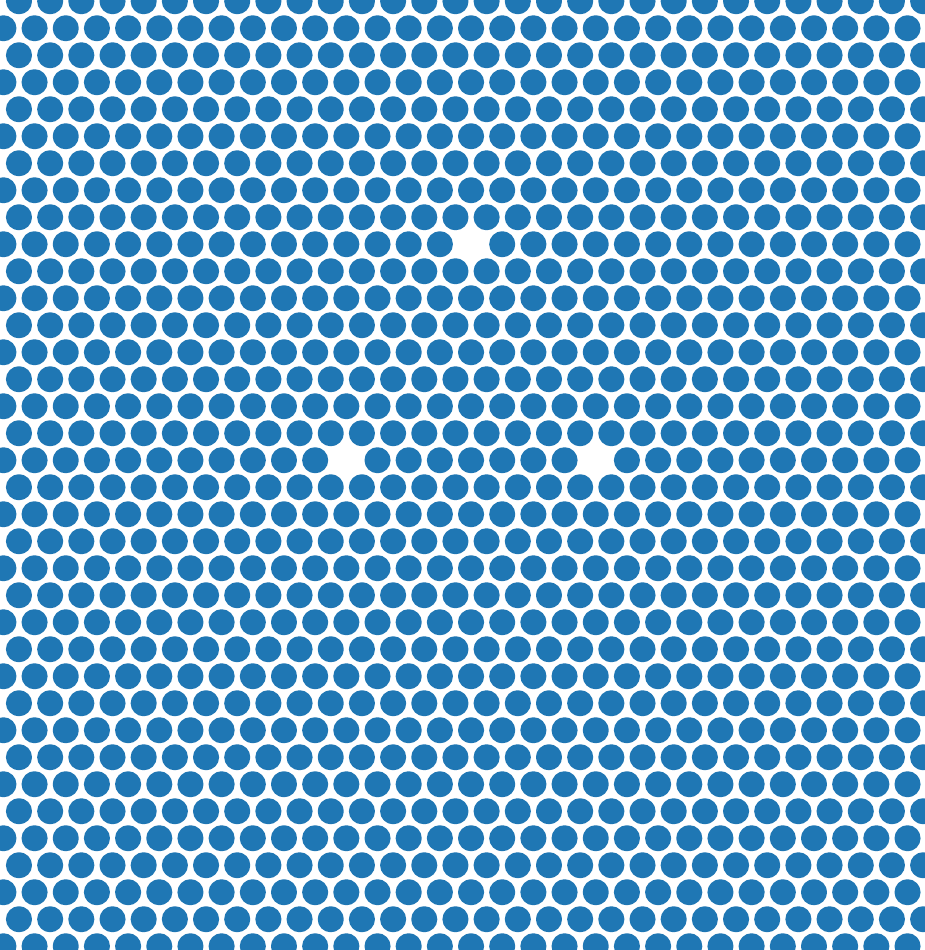}
		\qquad \quad
		 \includegraphics[scale=0.5]{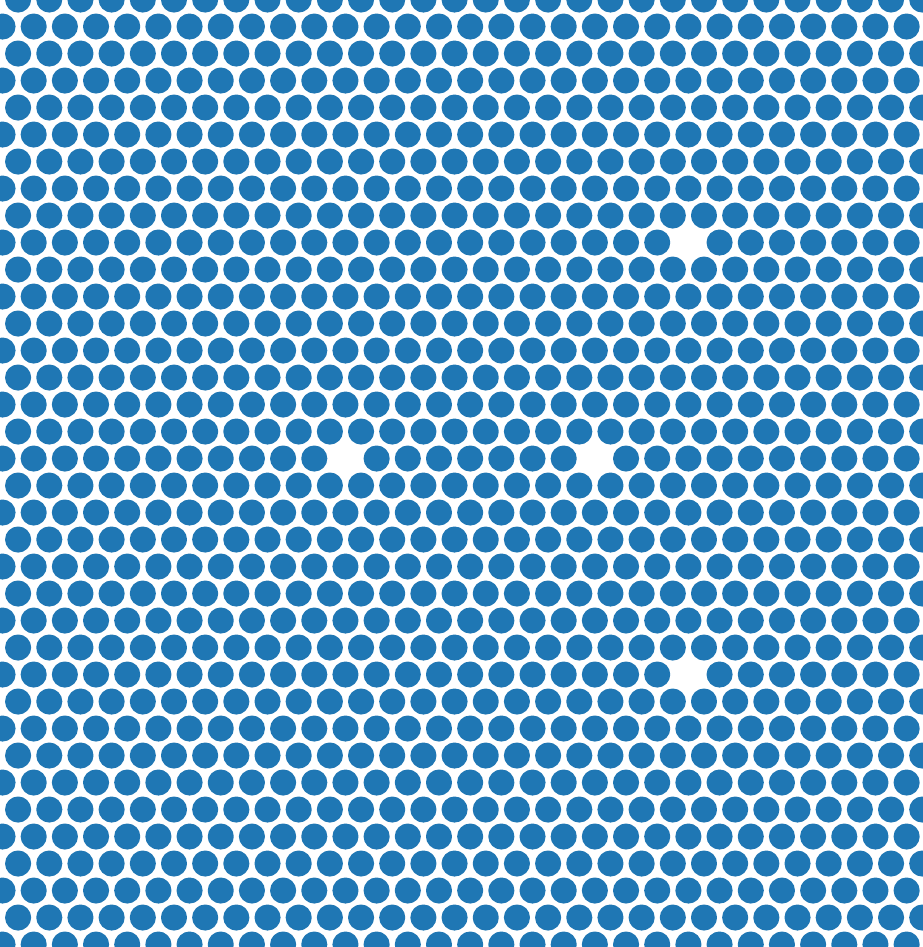}
    \caption{2D toy model: Illustration of the simulation domains for multi-vacancies cases with $n_{\D}=3$ (left) and $n_{\D}=4$ (right).}
    \label{fig:multivac_app}
\end{figure}

Table~\ref{table-params} presents the detailed fitting parameters, fitting accuracy in terms of RMSE, force constant error and fitting time for the case of two separated vacancies, where the total number of the observations and parameters $\{c_B\}_{B\in \pmb{B}}$ are denoted by $\# {\pmb O}$ and $\# \pmb{B}$, respectively.

\begin{table}
\footnotesize
\setlength\tabcolsep{14pt}
\renewcommand\arraystretch{0.9}
	\begin{center}
    {\bf Fitting accuracy on measurements and fitting time T}\\
	\vskip0.3cm
	\begin{tabular}{|cccccc|}
	\hline
		{} & {$\# {\pmb B}$} & {$\efit$ (meV)} & {$\ffit$ (eV/$\mathring{\mathrm{A}}$)} & {$\varepsilon^{\rm FC}_{\rm hom}$ (\%)} & {T (s)}\\[1mm]
		\hline
		\multirow{5}{*}{\shortstack{$L=4r_0$\\$\# {\pmb O}=6800$}}&51 &0.329 &0.110 &10.7 &26.5 \\
		&110 &0.288 &0.076 &8.9 &27.8 \\ &211 &0.206 &0.045 &7.7 &26.8 \\ &383 &0.231 &0.039 &6.8 &30.7 \\ &1119 &0.150 &0.036 &6.3 &38.7 
		\\
		\hline
		\multirow{5}{*}{\shortstack{$L=5r_0$\\$\# {\pmb O}=14000$}}&110 &0.364 &0.107 &11.0 &56.6 \\
		&211 &0.261 &0.072 &8.7 &54.5 \\ &383 &0.208 &0.048 &7.4 &55.8 \\ &668 &0.183 &0.043 &7.5 &58.4 \\ &1119 &0.143 &0.038 &5.1 &77.9 
		\\
		\hline
		\multirow{5}{*}{\shortstack{$L=6r_0$\\$\# {\pmb O}=17600$}}&110 &0.401 &0.106 &11.9 &67.3 \\
		&383 &0.274 &0.074 &8.3 &68.4 \\ &668 &0.165 &0.038 &6.3 &70.6 \\ &1119 &0.139 &0.035 &5.4 &80.7 \\ &1445 &0.127 &0.032 &4.9 &96.0 
		\\
		\hline
		\multirow{5}{*}{\shortstack{$L=7r_0$\\$\# {\pmb O}=28400$}}&211 &0.298 &0.113 &11.4 &114.8 \\
		&383 &0.297 &0.102 &9.6 &116.2 \\ &1119 &0.159 &0.050 &7.6 &119.6 \\ &1445 &0.095 &0.040 &6.8 &135.9 \\ &1809 &0.085 &0.034 &5.9 &172.8 
		\\
		\hline
		\multirow{5}{*}{\shortstack{$L=8r_0$\\$\# {\pmb O}=41600$}}&383 &0.271 &0.128 &13.8 &166.9 \\
		&668 &0.209 &0.114 &9.0 &162.2 \\ &1119 &0.127 &0.084 &7.4 &166.0 \\ &1809 &0.092 &0.038 &5.9 &199.8 \\ &2849 &0.102 &0.034 &5.8 &256.6 
		\\
		\hline
	\end{tabular}
	\end{center}
	\caption{2D toy model: The fitting accuracy (RMSE), force constant error and fitting time for the case of two separated vacancies, where $r_0$ is the {\it rescaled} lattice constant.
	}
	\label{table-params}
\end{table}

Figure~\ref{fig:intvac_app} shows the illustration of the simulation domain for interstitial-vacancies case with $n_{\D}=4$ considered in the numerical experiments.

\begin{figure}[!htb]
    \centering
\includegraphics[scale=0.55]{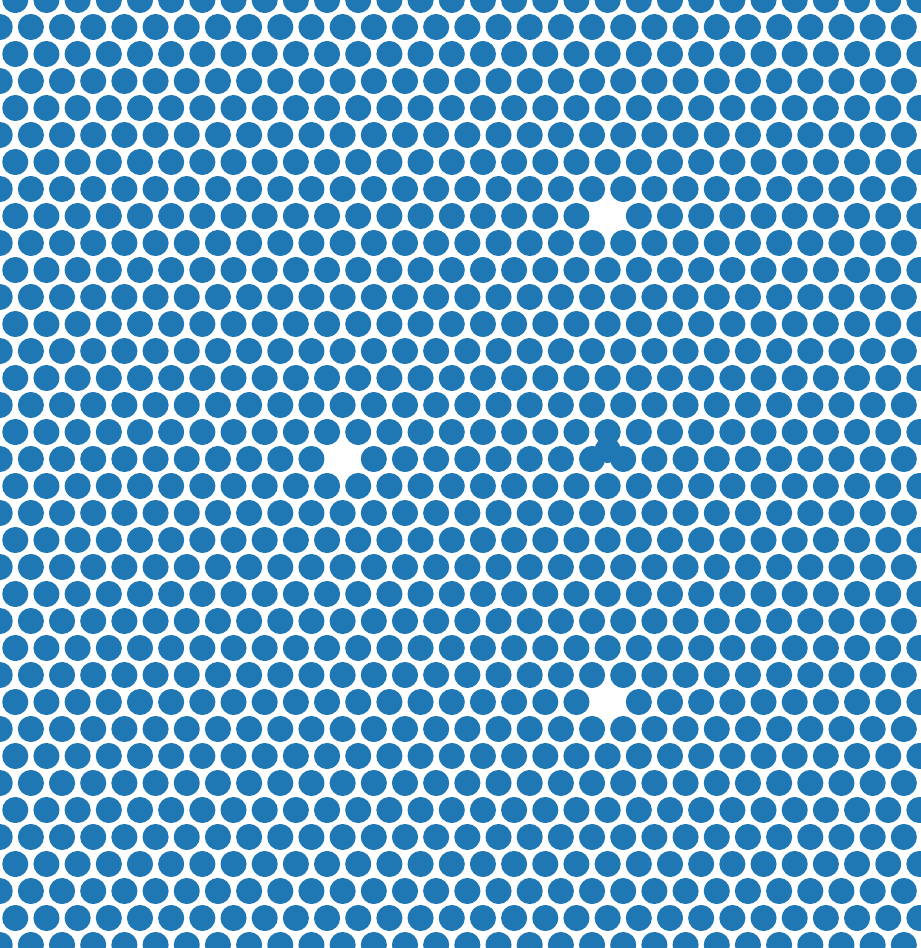}
    \caption{2D toy model: Illustration of the simulation domain for interstitial-vacancies case with $n_{\D}=4$.}
    \label{fig:intvac_app}
\end{figure}

\section{The Atomic Cluster Expansion}
\label{sec:ACE}

Following \cite{2019-ship1}, we briefly introduce the construction of the ACE potential. 
Given $\mathcal{N} \in \N$, we first write the ACE site potential in the form of an {\it atomic body-order expansion}, $\displaystyle V^{\rm ACE} \big(\{\pmb{g_j}\}\big) = \sum_{N=0}^{\mathcal{N}} \frac{1}{N!}\sum_{j_1 \neq \cdots \neq j_N} V_N(\pmb{g}_{j_1}, \cdots, \pmb{g}_{j_N})$, where the $N$-body potential $V_N : \R^{dN} \rightarrow \R$ can be approximated by using a tensor product basis \cite[Proposition 1]{2019-ship1},
\begin{align*}
\phi_{\pmb{k\ell m}}\big(\{\pmb{g}_j\}_{j=1}^N\big) :=\prod_{j=1}^N\phi_{k_j\ell_j m_j}(\pmb{g}_j) 
\quad & {\rm with}\quad 
\phi_{k\ell m}(\pmb{r}):=P_k(r)Y^{m}_{\ell}(\hat{r}) , ~~\pmb{r}\in\R^d,~r=|\pmb{r}|,~\hat{r}=\pmb{r}/r ,
\end{align*}
where $P_k,~k=0,1,2,\cdots$ are radial basis functions (for example, Jacobic polynomials), and $Y_{\ell}^m,~\ell = 0,1,2,\cdots,~m=-\ell,\cdots,\ell$ are the complex spherical harmonics.
The basis functions are further symmetrised to a permutation invariant form,
\begin{eqnarray*}
\tilde{\phi}_N = \sum_{(\pmb{k,\ell,m})~{\rm ordered}} \sum_{\sigma\in S_N} \phi_{\pmb{k\ell m}} \circ \sigma ,
\end{eqnarray*}  
where $S_N$ is the collection of all permutations, and by $\sum_{(\pmb{k,\ell,m})~{\rm ordered}}$ we mean that the sum is over all lexicographically ordered tuples $\big( (k_j, \ell_j, m_j) \big)_{j=1}^N$.
The next step is to incorporate the invariance under point reflections and rotations
\begin{eqnarray*}
\mathcal{B}_{\pmb{k\ell} i} = \sum_{\pmb{m}\in\mathcal{M}_{\pmb{\ell}}}\mathcal{U}_{\pmb{m}i}^{\pmb{k\ell}} \sum_{\sigma\in S_N} \phi_{\pmb{k\ell m}} \circ \sigma 
\quad{\rm with}\quad \mathcal{M}_{\pmb{\ell}} = \big\{\pmb{\mu}\in\Z^N ~|~ -\ell_{\alpha}\leq\mu_{\alpha}\leq\ell_{\alpha} \big\} ,
\end{eqnarray*}
where the coefficients $\mathcal{U}_{mi}^{k\ell}$ are given in \cite[Lemma 2 and Eq. (3.12)]{2019-ship1}.
It was shown in \cite{2019-ship1} that the basis defined above is explicit but computational inefficient. The so-called ``density trick" technique used in \cite{Bart10, Drautz19, Shapeev16} can transform this basis into one that is computational efficient. The alternative basis is 
\begin{eqnarray*}
B_{\pmb{k\ell} i} = \sum_{\pmb{m}\in\mathcal{M}_{\pmb{\ell}}}\mathcal{U}_{\pmb{m}i}^{\pmb{k\ell}} A_{\pmb{nlm}}
\quad{\rm with~the~correlations}\quad A_{\pmb{nlm}}:= \prod_{\alpha=1}^{N} \sum_{j=1}^{J} \phi_{n_\alpha l_\alpha m_{\alpha}}(\pmb{g}_j),
\end{eqnarray*}
which avoids both the $N!$ cost for symmetrising the basis as well as the $C_J^N$ cost of summation over all order $N$ clusters within an atomic neighbourhood.
The resulting basis set is then defined by
\begin{eqnarray}
\pmb{B}_N := \big\{ B_{\pmb{k\ell} i}  ~|~ (\pmb{k},\pmb{\ell})\in\N^{2N}~{\rm ordered}, ~\sum_{\alpha} \ell_{\alpha}~{\rm even},~ i=1,\cdots,\pmb{n}_{\pmb{k\ell}}\big\},
\end{eqnarray}
where $\pmb{n}_{\pmb{k\ell}}$ is the rank of body-orders (see \cite[Proposition 7 and Eq. (3.12)]{2019-ship1}).

Once the finite symmetric polynomial basis set $\pmb{B} \subset \bigcup^{\mathcal{N}}_{N=1} \pmb{B}_N$ is constructed, the ACE site potential can be expressed as
\begin{align}\label{ships:energy}
V^{\rm ACE}(\pmb{g}; \{c_B\}_{B\in\pmb{B}}) = \sum_{B\in\pmb{B}} c_B B(\pmb{g}) 
\end{align}
with the coefficients $c_{B}$. The corresponding force of this potential is denoted by $\F^{\rm ACE}$.

The family of potentials are systematically improvable (see \cite[\S 6.2]{2019-ship1}): by increasing the body-order, cutoff radius and polynomial degree they are in principle capable of representing an arbitrary many-body potential energy surface to within arbitrary accuracy.

\section{A semi-empirical QM model: NRL tight binding}
\label{sec:NRL}

The NRL tight binding model is first developed in \cite{cohen94}. The energy levels are determined by the generalised eigenvalue problem
\begin{eqnarray}\label{NRL:diag_Ham}
\mathcal{H}(y)\psi_s = \lambda_s\mathcal{M}(y)\psi_s 
\qquad\text{with}\quad \psi_s^{\rm T}\mathcal{M}(y)\psi_s = 1,
\end{eqnarray}
where $\mathcal{H}$ is the hamiltonian matrix and $\mathcal{M}(y)$ is the overlap matrix. The NRL hamiltonian and overlap matrices are construct both from hopping elements as well as on-site matrix elements as a function of the local environment. For carbon and silicon they are parameterised as follows (for other elements the parameterisation is similar): 

To define the on-site terms, each atom $\ell$ is assigned a pseudo-atomic density 
\begin{eqnarray*}
	\rho_{\ell} := \sum_{k}e^{-\lambda^2 r_{\ell k}}  \fc(r_{\ell k}),
\end{eqnarray*}
where the sum is over all of the atoms $k$ within the cutoff $\Rc$ of atom $\ell$, $\lambda$ is a fitting parameter, $\fc$ is a cutoff function
\begin{eqnarray*}
	\fc(r) = \frac{\theta(\Rc-r)}{1+\exp\big((r-\Rc)/l_c + L_c\big)} ,
\end{eqnarray*}
with $\theta$ the step function, and the parameters $l_c=0.5$, $L_c=5.0$ for most elements.
Although, in principle, the on-site terms should have off-diagonal elements, but this would lead to additional computational challenges that we wished to avoid. The NRL model follows traditional practice and only include the diagonal terms.
Then, the on-site terms for each atomic site $\ell$ are given by
\begin{eqnarray}\label{NRLonsite}
\mathcal{H}(y)_{\ell\ell}^{\upsilon\upsilon}
:= a_{\upsilon} + b_{\upsilon}\rho_{\ell}^{2/3} + c_{\upsilon}\rho_{\ell}^{4/3} + d_{\upsilon}\rho_{\ell}^{2},
\end{eqnarray}
where $\upsilon=s,p$, or $d$ is the index for angular-momentum-dependent atomic orbitals and $(a_\upsilon)$, $(b_\upsilon)$, $(c_\upsilon)$, $(d_\upsilon)$ are fitting parameters.  The on-site elements for the overlap matrix are simply taken to be the identity matrix.

The off-diagonal NRL Hamiltonian entries follow the formalism of  Slater and Koster who showed in \cite{Slater54} that all two-centre (spd) hopping integrals can be constructed from ten independent ``bond integral'' parameters $h_{\upsilon\upsilon'\mu}$, where
\begin{eqnarray*}
	(\upsilon\upsilon'\mu) = ss\sigma,~sp\sigma,~pp\sigma,~pp\pi,
	~sd\sigma,~pd\sigma,~pd\pi,~dd\sigma,~dd\pi,~{\rm and}~dd\delta.
\end{eqnarray*}
The NRL bond integrals are given by
\begin{eqnarray}\label{NRLhopping}
h_{\upsilon\upsilon'\mu}(r) 
:= \big(e_{\upsilon\upsilon'\mu} + f_{\upsilon\upsilon'\mu}r + g_{\upsilon\upsilon'\mu} r^2 \big) e^{-h_{\upsilon\upsilon'\mu}r} \fc(r)
\end{eqnarray}
with fitting parameters $e_{\upsilon\upsilon'\mu}, f_{\upsilon\upsilon'\mu},  g_{\upsilon\upsilon'\mu}, h_{\upsilon\upsilon'\mu}$. The matrix elements $\mathcal{H}(y)_{\ell k}^{\upsilon\upsilon'}$ are constructed from the $h_{\upsilon\upsilon'\mu}(r)$ by a standard procedure~\cite{Slater54}.

The analogous bond integral parameterisation of the overlap matrix 
is given by  
\begin{eqnarray}\label{NRLhopping-M}
m_{\upsilon\upsilon'\mu}(r) 
:= \big(\delta_{\upsilon\upsilon'} + p_{\upsilon\upsilon'\mu} r + q_{\upsilon\upsilon'\mu} r^2 + r_{\upsilon\upsilon'\mu} r^3 \big) e^{-s_{\upsilon\upsilon'\mu} r} \fc(r)
\end{eqnarray}
with the fitting parameters $(p_{\upsilon\upsilon'\mu}), (q_{\upsilon\upsilon'\mu}), (r_{\upsilon\upsilon'\mu}), (s_{\upsilon\upsilon'\mu})$ and $\delta_{\upsilon\upsilon'}$ the Kronecker delta function. 

The fitting parameters in the foregoing expressions are determined by fitting to some high-symmetry first-principle calculations: In the NRL method, a database of eigenvalues (band structures) and total energies were constructed for several crystal structures at  several volumes. Then the parameters are chosen such that the eigenvalues and energies in the database are reproduced.
For practical simulations, the parameters for different elements can be found in~\cite{papaconstantopoulos15}.

\bibliographystyle{plain}
\bibliography{bib.bib}
\end{document}